\DeclareMathOperator*{\osc}{osc}
\def\Xint#1{\mathchoice
{\XXint\displaystyle\textstyle{#1}}%
{\XXint\textstyle\scriptstyle{#1}}%
{\XXint\scriptstyle\scriptscriptstyle{#1}}%
{\XXint\scriptscriptstyle\scriptscriptstyle{#1}}%
\!\int}
\def\XXint#1#2#3{{\setbox0=\hbox{$#1{#2#3}{\int}$ }
\vcenter{\hbox{$#2#3$ }}\kern-.6\wd0}}
\def\dashint{\Xint-}
\numberwithin{equation}{section}
\theoremstyle{plain}
\newtheorem{theorem}{Theorem}[section]
\newtheorem{lemma}[theorem]{Lemma}
\newtheorem{proposition}[theorem]{Proposition}
\theoremstyle{remark}
\newtheorem{remark}[theorem]{Remark}
\author{Yan Zhang}
\title{Asymptotic Behavior of a Nonlocal KPP Equation with a Stationary Ergodic Nonlinearity}
\date{November 27, 2014}
\keywords{Nonlocal KPP equation, asymptotic behavior, stochastic homogenization}
\subjclass[2010]{35B40, 35B27, 35K57, 35R09, 35D40}
\begin{document}
\begin{abstract}
We consider a space-inhomogeneous Kolmogorov-Petrovskii-Piskunov (KPP) equation with a nonlocal diffusion and a stationary ergodic nonlinearity. By employing and adapting the theory of stochastic homogenization, we show that solutions of this equation asymptotically converge to its stationary states in regions of space separated by a front that is determined by a Hamilton-Jacobi variational inequality.
\end{abstract}
\maketitle
\section{Introduction}
\label{sec:intro}
The aim of this paper is to analyze the large space/long time asymptotic behavior of the nonlocal reaction-diffusion equation
\begin{equation} \label{originalequation}
u_t(x, t) - \int J(y)[u(x-y, t) - u(x, t)]dy  - f(x, u) = 0,
\end{equation}
where $J$ is a continuous, compactly supported, and symmetric kernel, and $f$ is a monostable/KPP type nonlinearity in $u$ for which the canonical example is $f(u) = u(1-u)$. To study the asymptotic behavior of \eqref{originalequation}, we introduce the ``hyperbolic'' scaling $(x, t) \mapsto (\epsilon^{-1} x, \epsilon^{-1}t).$ As $\epsilon \rightarrow 0$, the time scaling reproduces long-time behavior of (\ref{originalequation}), while the space scaling reproduces in bounded sets behavior for large space variables. The new unknown is now given by $u^\epsilon(x, t) := u(\epsilon^{-1}x, \epsilon^{-1}t).$ We introduce an initial condition $u^\epsilon(\cdot, 0) = u_0(\cdot)$, and we can easily see that $u^\epsilon$ satisfies
\begin{equation} \label{uequation}
\left\{ \begin{array}{l}
\displaystyle u^\epsilon_t(x, t) - \frac{1}{\epsilon}\int J(y) [u^\epsilon(x-\epsilon y, t) dy - u^\epsilon(x, t)] dy - \frac{1}{\epsilon}f\left(\frac{x}{\epsilon}, u^\epsilon\right)=0  \text{ in } \mathbb{R}^n \times (0, \infty), \\
u^\epsilon(x, 0) = u_0(x).
\end{array}\right.
\end{equation}
Our notion of asymptotic behavior of \eqref{originalequation} is embodied by the behavior of the initial value problem \eqref{uequation} as $\epsilon \rightarrow 0$. In the main result of this work, Theorem \ref{maintheorem}, we prove that as $\epsilon \rightarrow 0$, the $u^\epsilon$ converge respectively to the two equilibria of $f$, which for simplicity we take to be constant, in the two regions $\{\phi < 0\}$ and $\mathrm{int}(\{\phi = 0\})$. $\phi$ is the solution of the Hamilton-Jacobi variational inequality
\begin{equation} \label{effectiveequation}
\left\{
\begin{array}{ll}
\displaystyle \max(\phi_t + \overline{H}(D\phi), \phi) = 0 & \text{ in } \mathbb{R}^n \times (0, \infty) \\
\displaystyle \phi = \left\{
\begin{array}{ll}
0 & \text{ on } G_0 \times \{0\} \\
-\infty & \text{ on } \mathbb{R}^n \backslash \overline{G_0} \times \{0\}.
\end{array} \right.
\end{array} \right.
\end{equation}
$G_0$ is the support of $u_0$, and $\overline{H}(p)$ is an ``effective Hamiltonian'' resulting from the homogenization of \eqref{uequation}. To obtain such a result, it is necessary to make assumptions about the oscillatory behavior of $f$ in the $\frac{x}{\epsilon}$ variable. In this paper we will consider the situation when $f$ is a stationary ergodic process; the situation where $f$ is an almost periodic function is considered by the author in \cite{almostperiodicpaper}. This behavior was shown for a nonlocal equation very close to \eqref{originalequation} that models the propagation of an invasive species in ecology by Perthame and Souganidis in \cite{perthamesouganidis}, and similar asymptotic behavior was found for a non-local Lotka-Volterra equation by Barles, Mirrahimi, and Perthame in \cite{barlesnonlocal}.

Because the behavior of the solutions of \eqref{originalequation} consists of two equilibrium states joined together by a transition layer near the interface defined by \eqref{effectiveequation}, and the effective Hamiltonian $\overline{H}(p)$ can be interpreted as the propagation speed of this interface, our work is connected with the well-studied areas of traveling wave solutions of the KPP equation and the speed of their associated traveling fronts. There have been some recent developments in these areas in the nonlocal case. Coville, D\'avila, and Mart\'inez studied \eqref{originalequation} for periodic $f$ in \cite{covilledavilamartinez1} and \cite{covilledavilamartinez2} and showed that there exists a minimal speed, called the critical speed, for which there exists a pulsating front solution of \eqref{originalequation}. The existence of traveling wave solutions and a critical speed was considered for a non-local KPP equation similar to \eqref{originalequation} by Berestycki, Nadin, Perthame, and Ryzhik in \cite{berestycki}. Transition fronts for a one-dimensional space-inhomogeneous KPP equation were studied by Nolen, Roquejoffre, Ryzhik, and Zlatos in \cite{zlatos1}, where they gave conditions for the existence of transition fronts and found a range of possible speeds. This work was extended to \eqref{originalequation} by Lim and Zlatos in \cite{zlatos2}.

The local version of (\ref{originalequation}), i.e. the equation where the integral term is replaced by a uniformly elliptic second-order operator, has been studied extensively. Its rescaled form reads
\begin{equation} \label{localequation}
u^\epsilon_t - \epsilon a_{ij}\left(x, \frac{x}{\epsilon}\right)u^\epsilon_{ij} + \epsilon^{-1} f\left(x, \frac{x}{\epsilon}, u^\epsilon\right) = 0.
\end{equation}
It was originally studied in the 1930's by Fisher in \cite{fisher} and by Kolmogorov, Petrovskii, and Piskunov in \cite{kpp}. Freidlin in \cite{freidlin} studied the behavior of (\ref{localequation}) using probabilistic methods for the $\frac{x}{\epsilon}$-independent problem. Evans and Souganidis in \cite{souganidisevans} extended \cite{freidlin} and introduced a different approach based on PDE methods which has proven to be more flexible. The behavior of the $u^\epsilon$ in the presence of periodic space-time oscillation was analyzed by Majda and Souganidis \cite{majda}. Finally, Souganidis \cite{takis1999} and Lions and Souganidis \cite{lionssouganidis} considered stationary ergodic coefficients. Our work is an extension of this work, in particular \cite{souganidisevans} and \cite{majda}, to the case of a nonlocal diffusion. There is also a vast literature dealing with the long-time behavior of \eqref{localequation}, going back to the work of Aronson and Weinberger \cite{aronsonweinberger}.

Due to the presence of the oscillatory variable $\frac{x}{\epsilon}$ in \eqref{uequation}, the theory of homogenization plays a crucial part in the analysis of this equation as $\epsilon \rightarrow 0$. The study of homogenization of Hamilton-Jacobi equations in periodic settings began with the work of Lions, Papanicolaou, and Varadhan \cite{lpv}, and homogenization for ``viscous'' Hamilton-Jacobi equations was studied by Evans \cite{evansptf}. Arisawa in \cite{arisawa2} studied the periodic homogenization of integro-differential equations with L\'evy operators, equations that bear similarities to the ones we study.
%The fundamental tool in the periodic setting is the fact that it is possible to solve the macroscopic problem, or ``cell problem,'' which is not possible in general. Examples and a discussion of this issue can be found in Lions and Souganidis \cite{lionssouganidis1}.
Homogenization in the almost-periodic case was established by Ishii \cite{ishii}. In the stationary ergodic case, the homogenization of Hamilton-Jacobi equations was established by Souganidis \cite{takis1999} (see also Rezakhanlou and Tarver \cite{tarver}), while the viscous Hamilton-Jacobi equation was studied by Lions and Souganidis \cite{lionssouganidis} and by Kosygina, Rezakhanlou, and Varadhan \cite{kosygina}.
%The time-dependent problem was studied by Kosygina and Varadhan \cite{kosygina2} and by Schwab \cite{schwab}.
The main additional difficulty in the general stationary ergodic setting, as opposed to the almost periodic or periodic settings, is that a ``corrector'' function solving the macroscopic problem, or ``cell problem,'' cannot be found in general, as discussed by Lions and Souganidis in \cite{lionssouganidis1}. It was necessary to introduce a different approach based on the subadditive ergodic theorem and the control interpretation of the equation, and this restricts us to the study of convex Hamilton-Jacobi and viscous Hamilton-Jacobi equations.

Recently, Armstrong and Souganidis in \cite{takispaper} and \cite{takispaper2} put forward a systematic way to prove homogenization of convex Hamilton-Jacobi and viscous Hamilton-Jacobi equations which this paper employs, because homogenization implies the desired asymptotic behavior for solutions of \eqref{uequation}. This approach uses uniform \emph{a priori} bounds on the solutions of the approximated cell problem, in this case \eqref{approxcellproblemomega}, to find the effective Hamiltonian and a strictly sublinear at infinity ``subcorrector'' via weak limits, and then employs the ``metric problem'' in order to prove that homogenization occurs almost surely. However, such \emph{a priori} bounds are not readily available in the nonlocal case, unlike in the local case where Bernstein's method can be applied. A significant part of this paper is devoted to proving such estimates for \eqref{approxcellproblemomega}, which is done with an additional assumption on the inhomogeneity of $f$, namely that the oscillation of the quantity $f_u(x, 0)$ is less than the integral of $J$. This kind of condition is not unprecedented; it was shown in \cite{zlatos2} that an oscillation bound for $f_u$ is necessary for transition fronts to exist for \eqref{originalequation}. An additional difficulty in our setting lies in the adaptation of the metric problem, in particular with the issue of constructing a barrier to ensure that the metric problem is well-posed. This is circumvented by allowing solutions of the metric problem to have a jump discontinuity at the boundary; this discontinuity does not affect the homogenization of the metric problem.

The paper is organized as follows. In Section \ref{sec:preliminaries}, we make precise our assumptions and state the ergodic and subadditive ergodic theorems which will be used. In Section \ref{sec:maintheorem} we state our main result, Theorem \ref{maintheorem}, and give a heuristic justification for it. In Section \ref{sec:estimates}, the needed $L^\alpha$ and oscillation estimates are proven for solutions of the approximated cell problem \eqref{approxcellproblemomega}. The effective Hamiltonian is identified and its basic properties are studied in Section \ref{sec:effhamiltonian}. In Section \ref{sec:metricproblem} we consider the metric problem and obtain its almost sure homogenization using the subadditive ergodic theorem. In Section \ref{sec:homogproof} we conclude by proving the main homogenization result in the stationary ergodic setting.
\section{Preliminaries and Assumptions}
\label{sec:preliminaries}
The following assumptions will be in force throughout this paper. We assume that $f\in C^\infty(\mathbb{R}^{n+1})$, satisfies
\begin{equation} \label{fassumption}
\sup_{x \in \mathbb{R}^n, |u| \leq L} \{|D_xf(x, u)| + |D^2_xf(x, u)|\} < \infty \text{ for each } L > 0,
\end{equation}
and is of KPP type. That is, for every $x \in \mathbb{R}^n$, $f$ satisfies
\begin{align} \label{monostable}
&\left\{
\begin{array}{rl}
f(x, u) < 0 & \text{for } u \in (-\infty, 0) \cup (1, \infty),\\
f(x, u) > 0 & \text{for } u \in (0, 1), \end{array}\right. \\
\label{kppcondition}
&c(x) := \frac{\partial f}{\partial u}(x, 0)= \sup_{u > 0} u^{-1} f(x, u) \geq \kappa > 0.
\end{align}
Note that due to (\ref{fassumption}), $c(x)$ is smooth, bounded, and Lipschitz continuous with constant $K$.

Concerning the kernel $J$, we assume that
\begin{equation} \label{Jassumption}
\left\{
\begin{array}{l}
 J \text{ is compactly supported in a set } O \subset B(0, \overline{r}), \\
 J \in C(\mathbb{R}^n), J(x) = J(-x) \text{ for all } x \in \mathbb{R}^n, \int_{\mathbb{R}^n} J(y) dy = \bar{J} < \infty,\\
 \text{There exists } r_1 > 0 \text{ such that } J(y) \geq A > 0 \text{ on } B(0, r_1).\end{array} \right.
\end{equation}
Concerning the initial condition $u_0$, we assume that
\begin{equation} \label{u0assumption}
u_0 \in C(\mathbb{R}^n), 0 \leq u_0 \leq 1, \text{ and } G_0 = \mathrm{supp}(u_0) \text{ is compact}.
\end{equation}
In general, our problem is considered in a random environment described by a probability space $(\Omega, \mathcal{F}, \mathbb{P})$, which is endowed with a group $(\tau_y)_{y \in \mathbb{R}^n}$ of $\mathcal{F}$-measurable, measure-preserving transformations $\tau_y: \Omega \rightarrow \Omega$. We assume that $(\tau_y)$ is \emph{ergodic}; that is, if $D \subseteq \Omega$ is a set that satisfies $\tau_z(D) = D$ for all $z \in \mathbb{R}^n$, then either $\mathbb{P}[D] = 0$ or $\mathbb{P}[D] = 1$. An $\mathcal{F}$-measurable process $f$ is said to be \emph{stationary} if for all $y, z \in \mathbb{R}^n$ and $\omega \in \Omega$, $f(y, \tau_z \omega) = f(y+z, \omega).$ Our assumption on the coefficient $c(z, \omega)$ in the stationary ergodic setting is that
\begin{equation} \label{stationarityassumption}
c(z, \omega) \text{ is stationary}.
\end{equation}
It can be seen that 1-periodicity and almost-periodicity are specific cases of the stationarity assumption.

It is necessary to prove some \emph{a priori} bounds, and for this equation, we have not been able to do so without controlling the oscillation of the function $c$:
\begin{equation} \label{c0assumption}
\osc_{\mathbb{R}^n} c(z, \omega) := \sup_{\mathbb{R}^n} c(z, \omega) - \inf_{\mathbb{R}^n} c(z, \omega) = \rho < \bar{J}.
\end{equation}
As mentioned in the introduction, a similar assumption is necessary for transition fronts to exist for \eqref{originalequation} and its local analogue (see \cite{zlatos1}, \cite{zlatos2}).

The following ergodic theorem can be found in Becker \cite{becker} and will be used several times in this paper.
\begin{proposition}
Suppose that $f: \mathbb{R}^n \times \Omega \rightarrow \mathbb{R}$ is stationary and $\mathbb{E}[|f(0, \cdot)|] < \infty$. Then there exists a subset $\hat{\Omega} \subseteq \Omega$ of full probability such that for each bounded domain $V \subset \mathbb{R}^n$ and $\omega \in \hat{\Omega}$,
$$
\lim_{t \rightarrow \infty} \dashint_{tV} f(y, \omega) dy = \mathbb{E}[f].
$$
\end{proposition}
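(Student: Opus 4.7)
The plan is to reduce this continuous-parameter multidimensional ergodic theorem to the classical Birkhoff--Wiener ergodic theorem for the discrete $\mathbb{Z}^n$-action $\{\tau_k\}_{k \in \mathbb{Z}^n}$. Splitting $f = f^+ - f^-$, I would first treat the case $f \geq 0$. Introducing the cube integral $g(\omega) := \int_{[0,1]^n} f(y,\omega)\,dy$, Fubini and stationarity give $\mathbb{E}|g| \leq \mathbb{E}|f(0,\cdot)| < \infty$ and $\mathbb{E}[g] = \mathbb{E}[f(0,\cdot)]$. Pointwise stationarity yields $g(\tau_k\omega) = \int_{k+[0,1]^n} f(y,\omega)\,dy$ for each $k \in \mathbb{Z}^n$, so
$$\frac{1}{N^n}\sum_{k \in \{0,\dots,N-1\}^n} g(\tau_k \omega) \;=\; \dashint_{N [0,1]^n} f(y,\omega)\,dy,$$
and applying Wiener's multidimensional ergodic theorem to the ergodic $\mathbb{Z}^n$-action produces convergence to $\mathbb{E}[f(0,\cdot)]$ on a full-probability set $\Omega_1$ along integers $N$.

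I would then bootstrap from $N \in \mathbb{N}$ to continuous $t \to \infty$ using the inclusion $\lfloor t\rfloor [0,1]^n \subset t[0,1]^n \subset \lceil t\rceil [0,1]^n$ together with nonnegativity of $f$ and the volume ratio $(\lceil t\rceil/\lfloor t\rfloor)^n \to 1$. Translating the unit cube by any rational vector and intersecting over the (countable) resulting full-probability sets gives convergence of $\dashint_{tQ} f\,dy$ for every rational translate $Q$ of $[0,1]^n$ on a subset $\Omega_2 \subset \Omega_1$ of full probability.

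To pass to an arbitrary bounded domain $V$ (taken to satisfy $|\partial V| = 0$, which is standard for a PDE ``domain''), I would approximate $V$ from inside and outside by finite unions of dyadic rational cubes $V^-_m \subset V \subset V^+_m$ with $|V^+_m \setminus V^-_m| \to 0$, and use the nonnegativity sandwich
$$\frac{|V^-_m|}{|V|}\,\dashint_{tV^-_m} f\,dy \;\leq\; \dashint_{tV} f\,dy \;\leq\; \frac{|V^+_m|}{|V|}\,\dashint_{tV^+_m} f\,dy.$$
Sending $t \to \infty$ first, using $\Omega_2$ and the known convergence for finite unions of rational cubes, and then sending $m \to \infty$ completes the argument, with $\hat{\Omega} := \Omega_2$.

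The main obstacle is ensuring that the exceptional set $\hat{\Omega}$ is independent of $V$: one cannot take an uncountable intersection of full-probability sets, so the argument is forced to restrict the direct application of the ergodic theorem to a countable family (rational cubes) and then rely on the nonnegativity reduction plus the vanishing ratio $|\partial(tV)|/|tV| = O(1/t)$ to upgrade convergence to every bounded domain simultaneously.
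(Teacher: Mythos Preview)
The paper does not prove this proposition at all: it is stated as a quoted result and attributed to Becker \cite{becker}, so there is no ``paper's own proof'' to compare against. Your outline is therefore not competing with anything in the text; it is simply a self-contained sketch of a classical multiparameter ergodic theorem.

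As a sketch it is standard and essentially sound, but one step deserves care. You invoke ``Wiener's multidimensional ergodic theorem for the ergodic $\mathbb{Z}^n$-action,'' yet the hypothesis in the paper is ergodicity of the \emph{continuous} group $(\tau_y)_{y\in\mathbb{R}^n}$, which does not imply that the discrete subaction $(\tau_k)_{k\in\mathbb{Z}^n}$ is ergodic. Without that, the $\mathbb{Z}^n$ ergodic theorem only gives convergence to $\mathbb{E}[g\mid\mathcal{I}_{\mathbb{Z}^n}]$, not to $\mathbb{E}[g]$. The fix is routine: once you have bootstrapped to continuous $t$, observe that for any fixed $y\in\mathbb{R}^n$ the sets $t[0,1]^n$ and $y+t[0,1]^n$ differ by a set of relative measure $O(1/t)$, so (using $f\ge 0$ and sandwiching by slightly larger and smaller integer cubes) the limit $L(\omega)=\lim_{t\to\infty}\dashint_{t[0,1]^n}f(\cdot,\omega)$ satisfies $L(\tau_y\omega)=L(\omega)$ for every $y\in\mathbb{R}^n$; ergodicity of the full $\mathbb{R}^n$-action then forces $L=\mathbb{E}[g]=\mathbb{E}[f(0,\cdot)]$ a.s. Alternatively, one can bypass the discretization entirely and cite the continuous-parameter Wiener ergodic theorem for $\mathbb{R}^n$-flows directly, which is presumably what the Becker reference does.

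A second, minor point: your ``rational translate'' step is phrased as convergence of $\dashint_{tQ}f$ for $Q=q+[0,1]^n$, but $tQ=tq+t[0,1]^n$ has a \emph{moving} basepoint $tq$, and you should say explicitly why this still falls under the ergodic theorem you are using (e.g., take $t$ along multiples of the denominator of $q$ so that $tq\in\mathbb{Z}^n$, then sandwich). The dyadic-cube approximation of a Jordan-measurable $V$ and the nonnegativity sandwich are fine.
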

We will also employ a subadditive ergodic theorem in this paper, and its statement will require some additional notation. Let $\mathcal{I}$ denote the class of subsets of $[0, \infty)$ which consists of finite unions of intervals of the form $[a, b)$. Let $\{\sigma_t\}_{t \geq 0}$ be a semigroup of measure-preserving transformations on $\Omega$. A \emph{continuous subadditive process} on $(\Omega, \mathcal{F}, \mathbb{P})$ with respect to $\sigma_t$ is a map $Q: \mathcal{I} \rightarrow L^1(\omega, \mathbb{P})$ that satisfies the following conditions:
\begin{compactenum}
    \item $Q(I)(\sigma_t \omega) = Q(t+I)(\omega)$ for each $t > 0$, $I \in \mathcal{I}$ and a.s. in $\omega$,
    \item $\mathbb{E}[|Q(I)|] \leq C |I|$ for some $C > 0$ and every $I \in \mathcal{I}$,
    \item If $I_1, I_2, \ldots, I_k \in \mathcal{I}$ are disjoint, then $\displaystyle Q\left(\bigcup_{j = 1}^k I_j\right) \leq \sum_{j = 1}^k Q(I_j).$
\end{compactenum}
For the proof of the following subadditive ergodic theorem, see Akcoglu and Krengel \cite{akcoglukrengel}.
\begin{proposition}
Suppose that $Q$ is a continuous subadditive process. Then there is a random variable $a(\omega)$ such that
$$
\lim_{t \rightarrow \infty} \frac{1}{t} Q([0, t))(\omega) = a(\omega) \text{ a.s. in } \omega.
$$
If $\{\sigma_t\}_{t > 0}$ is ergodic, then $a(\omega)$ is deterministic, that is, constant with respect to $\omega$.
\end{proposition}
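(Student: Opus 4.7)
The plan is to reduce this continuous-parameter statement to Kingman's classical discrete subadditive ergodic theorem by restricting $Q$ to integer intervals, and then to control the error between integer and real times.

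First I would set $X_n(\omega) := Q([0, n))(\omega)$ for each positive integer $n$. Using property (3) on the decomposition $[0, m+n) = [0, m) \cup [m, m+n)$ together with property (1) applied to the right piece (so that $Q([m, m+n))(\omega) = X_n(\sigma_m \omega)$) yields the subadditive cocycle bound $X_{m+n}(\omega) \leq X_m(\omega) + X_n(\sigma_m \omega)$. Combined with $\mathbb{E}[|X_n|] \leq Cn$ from property (2), this matches Kingman's hypotheses exactly, and produces a random variable $a(\omega)$ with $X_n(\omega)/n \to a(\omega)$ almost surely.

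Next I would pass from integer to real parameters. For $t \in (0, \infty)$, write $t = n + s$ with $n = \lfloor t \rfloor$ and $s \in [0, 1)$. Two applications of subadditivity give the bracket
$$
X_{n+1}(\omega) - Q([t, n+1))(\omega) \leq Q([0, t))(\omega) \leq X_n(\omega) + Q([n, t))(\omega).
$$
Dividing by $t$, the outer terms each tend to $a(\omega)$ a.s., so the task is to show that $\tfrac{1}{t}\sup_{s \in [0,1)} |Q([n, n+s))(\omega)| \to 0$ a.s. By stationarity (property (1)) this supremum equals $M(\sigma_n \omega)$, where $M(\omega) := \sup_{s \in [0,1)} |Q([0, s))(\omega)|$, and Birkhoff's pointwise ergodic theorem applied to $M$ then gives $M(\sigma_n \omega)/n \to 0$ a.s., provided one knows $\mathbb{E}[M] < \infty$. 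Establishing this integrability is the main technical obstacle: property (2) only controls $\mathbb{E}[|Q(I)|]$ for each fixed $I$, and one must invoke the regularity implicit in the term \emph{continuous} subadditive process in the Akcoglu-Krengel framework to promote this to a sup bound over partial intervals.

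Finally, for the ergodic statement, I would use property (1) to write $a(\sigma_t \omega) = \lim_{T \to \infty} T^{-1} Q([t, T+t))(\omega)$, and the same bracketing argument (comparing $[t, T+t)$ with $[0, T))$ via $[0, t)$ and $[T, T+t)$) shows that this limit equals $a(\omega)$ a.s. Hence $a$ is $\sigma_t$-invariant for every $t > 0$, and ergodicity of $\{\sigma_t\}$ forces $a$ to be deterministic.
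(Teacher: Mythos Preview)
The paper does not give its own proof of this proposition; it simply cites Akcoglu and Krengel. So there is no in-paper argument to compare against, and your proposal should be judged on its own merits.

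Your reduction to Kingman along integers and the sandwich between $X_n$ and $X_{n+1}$ is the standard route and is correct in outline. The place where the argument is incomplete is exactly where you flag it: the integrability (and even measurability) of $M(\omega) = \sup_{s \in [0,1)} |Q([0,s))(\omega)|$. Your resolution, appealing to ``the regularity implicit in the term \emph{continuous} subadditive process,'' is not justified. In the Akcoglu--Krengel terminology, \emph{continuous} refers to the continuous-parameter semigroup $\{\sigma_t\}_{t\ge 0}$ (as opposed to a discrete $\mathbb{Z}$-action), not to any path regularity of $s \mapsto Q([0,s))(\omega)$. The three axioms listed in the paper give only $\mathbb{E}[|Q([0,s))|] \le Cs$ for each fixed $s$, which does not by itself control the supremum over $s$, nor does it guarantee that this supremum is measurable. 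Without an additional hypothesis (e.g.\ separability or right-continuity in $s$) or a genuine maximal inequality, the step $M(\sigma_n\omega)/n \to 0$ a.s.\ is unsupported.

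For comparison, the Akcoglu--Krengel argument does not go through Kingman plus a sup bound. It decomposes $Q$ into a purely additive part and a nonnegative subadditive remainder, and then proves a maximal inequality tailored to the nonnegative piece; this sidesteps the need for $\mathbb{E}[M]<\infty$ in the form you require. If you want to keep your Kingman-based strategy, you should either add an explicit regularity hypothesis on $s\mapsto Q([0,s))$ sufficient to make $M$ measurable and integrable, or replace the $M(\sigma_n\omega)/n\to 0$ step with a maximal-type lemma that follows from the stated axioms alone.
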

\section{Main Theorem, Heuristic Derivation}
\label{sec:maintheorem}
We now state our main result.
\begin{theorem} \label{maintheorem}
Assume \eqref{fassumption}-\eqref{c0assumption}. Then almost surely in $\omega$ there exists a continuous function $\overline{H}: \mathbb{R}^n \rightarrow \mathbb{R}$ such that as $\epsilon \rightarrow 0$, $u^\epsilon \rightarrow 0$ in $\{\phi < 0\}$ and $u^\epsilon \rightarrow 1$ in $\mathrm{int}\{\phi = 0\}$ locally uniformly, where $\phi$ is the unique solution of \eqref{effectiveequation}.
\end{theorem}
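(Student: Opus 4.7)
The overall plan is to adapt the Evans--Souganidis/Armstrong--Souganidis homogenization program to the nonlocal setting of \eqref{uequation}. The first step is a Cole--Hopf-type change of variables
$$
v^\epsilon := -\epsilon \log u^\epsilon, \qquad \phi^\epsilon := -v^\epsilon,
$$
so that $u^\epsilon = e^{\phi^\epsilon/\epsilon}$ and the bound $0 \le u^\epsilon \le 1$ becomes $\phi^\epsilon \le 0$. A direct calculation converts \eqref{uequation} into a nonlocal Hamilton--Jacobi equation for $v^\epsilon$ whose heuristic $\epsilon \to 0$ limit, obtained by formally expanding $\epsilon^{-1}(v^\epsilon(x) - v^\epsilon(x-\epsilon y)) \approx y\cdot Dv$ and using $u^{-1}f(x/\epsilon, u) \to c(x/\epsilon,\omega)$ as $u\downarrow 0$, is governed by the convex Hamiltonian
$$
H(z, p, \omega) \;=\; \int J(y)\bigl(e^{y\cdot p}-1\bigr)\,dy \;-\; c(z,\omega).
$$
The one-sided constraint $\phi^\epsilon \le 0$ produced by $u^\epsilon \le 1$ is precisely the feature that yields the obstacle $\max(\,\cdot\,,\phi)$ in \eqref{effectiveequation}.

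With the transformed equation in hand, I would apply the half-relaxed limit method to the family $\{\phi^\epsilon\}$, using its upper and lower semicontinuous envelopes $\phi^*$ and $\phi_*$. After $\overline{H}(p)$ has been identified in the homogenization step, the goal is to show that $\phi^*$ is a viscosity subsolution and $\phi_*$ is a viscosity supersolution of \eqref{effectiveequation}. The comparison principle for this Hamilton--Jacobi obstacle problem then forces $\phi^* = \phi_* = \phi$, and the identity $u^\epsilon = e^{\phi^\epsilon/\epsilon}$ immediately yields $u^\epsilon \to 0$ locally uniformly on $\{\phi<0\}$. The converse convergence $u^\epsilon \to 1$ on $\mathrm{int}\{\phi=0\}$ would then be obtained by a local subsolution/barrier argument based on the KPP positivity \eqref{kppcondition}: once $u^\epsilon$ is uniformly bounded away from $0$ on a small ball, the reaction term pushes it up to the stable state $1$.

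The substantive work, corresponding to Sections \ref{sec:estimates}--\ref{sec:metricproblem}, is the homogenization step that produces $\overline{H}(p)$ and the subcorrector required by the half-relaxed limit argument. Following Armstrong--Souganidis, for each $p \in \mathbb{R}^n$ and $\delta>0$ I would study the approximated cell problem \eqref{approxcellproblemomega}, first deriving uniform $L^\alpha$ and oscillation estimates on its solution $v^\delta$, then extracting $\overline{H}(p)$ together with a strictly sublinear-at-infinity subcorrector by passing to weak limits, and finally upgrading to almost-sure convergence $\delta v^\delta(0,\omega) \to -\overline{H}(p)$ via the metric problem and the subadditive ergodic theorem stated above. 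A nonlocal-specific modification will be unavoidable at the metric-problem stage: the barrier construction that secures well-posedness in the local case fails here, and my remedy would be to admit solutions with a jump discontinuity across the boundary and to show that this jump is invisible to the homogenization limit.

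The principal obstacle is the first item of the homogenization program, namely the uniform $L^\alpha$ and oscillation estimates on the approximated cell problem. In the local case these are produced by Bernstein's method, which has no direct analogue for a purely integral operator. This is precisely where hypothesis \eqref{c0assumption} must enter: the convex nonlocal term $\int J(y)\bigl(e^{y\cdot(p+Dv^\delta)}-1\bigr)\,dy$ behaves roughly like $\bar J$ times a coercive function of $Dv^\delta$, so if $\osc c < \bar J$ then the source $c(z,\omega)$ cannot overwhelm the diffusive integral and one can close a contradiction argument against large oscillations of $v^\delta$. I expect this absorption estimate to be the technical heart of the proof; the ensuing identification of $\overline{H}$, its continuity, and the metric-problem analysis should then follow the Armstrong--Souganidis template with only routine adaptations to accommodate the nonlocality.
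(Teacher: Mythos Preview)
Your proposal is correct and follows essentially the same route as the paper: Hopf--Cole transformation, half-relaxed limits with a perturbed test function argument fed by the Armstrong--Souganidis program (oscillation bounds on \eqref{approxcellproblemomega}, weak-limit identification of $\overline{H}$ and a sublinear corrector, metric problem with a boundary jump homogenized via the subadditive ergodic theorem), and a separate barrier argument for $u^\epsilon\to 1$. The only caveat is terminological: in the paper's sign convention the weak limit produces an approximate \emph{supercorrector} (the inequality \eqref{supercorrector}), and the oscillation estimates are obtained not by an absorption argument but by an overlapping-balls argument on the nonlocal quantities $\Phi^\lambda,\Psi^\lambda$ in \eqref{philambda}--\eqref{psidefinition}, which is where \eqref{c0assumption} is actually used.
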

Next we explain in a heuristic way the origin of the variational inequality and why it controls the asymptotic behavior of the $u^\epsilon$. Following the work for local KPP equations mentioned in the introduction, we now use the classical Hopf-Cole transformation
\begin{equation} \label{hopfcole}
u^\epsilon = \exp(\epsilon^{-1} \phi^\epsilon).
\end{equation}
It is immediate that for $t = 0$, $\phi^\epsilon = -\infty$ on $\mathbb{R}^n \backslash \overline{G}_0$ and $\phi^\epsilon \rightarrow 0$ on $G_0$ as $\epsilon \rightarrow 0.$ The interesting part of the transformation comes into play for $t > 0$. We can see via straightforward calculations that $\phi^\epsilon$ solves
$$
\phi^\epsilon_t(x, t) + \bar{J} - \int J(y) \exp\left(\frac{\phi^\epsilon(x - \epsilon y, t) - \phi^\epsilon(x, t)}{\epsilon}\right) dy - \frac{1}{u^\epsilon} f\left(\frac{x}{\epsilon}, u^\epsilon\right) = 0,
$$
an equation which can be analyzed using homogenization techniques. We assume that $\phi^\epsilon$ admits the asymptotic expansion $\phi^\epsilon(x, t) = \phi(x, t) + \epsilon v(\frac{x}{\epsilon}) + O(\epsilon^2).$ Writing $z = \frac{x}{\epsilon}$ and performing a formal computation, we obtain
$$
\phi_t + \bar{J} - \int J(y) \exp\left(\frac{\phi(x - \epsilon y, t) -\phi(x, t)}{\epsilon} + v(z-y) - v(z))\right) dy - \frac{1}{u^\epsilon} f\left(z, u^\epsilon\right)  = 0.
$$
Formally, we can say that as $\epsilon \rightarrow 0$, $\epsilon^{-1}(\phi(x - \epsilon y, t) -\phi(x, t)) \rightarrow -y\cdot D\phi(x, t).$ In addition, if $u^\epsilon \rightarrow 0$ as $\epsilon \rightarrow 0$, then
\begin{equation}
(u^\epsilon)^{-1} f(z, u^\epsilon) \rightarrow \displaystyle \frac{\partial f}{\partial u}(z, 0) = c(z). \label{blah2}
\end{equation}
Writing $p = D\phi(x, t)$, we see that oscillatory behavior disappears in the limit as $\epsilon \rightarrow 0$ if it is possible to find a constant $\overline{H}(p)$ and a function $v$ that solves
\begin{equation} \label{cellproblem}
\bar{J}-\int J(y) \exp(-y\cdot p) \exp(v(z-y) - v(z)) dy - c(z) = \overline{H}(p),
\end{equation}
which is a typical macroscopic problem or ``cell problem'' from homogenization theory. The issue is to find $\overline{H}(p)$, referred to as the effective Hamiltonian, so that (\ref{cellproblem}) admits a solution $v$, typically referred to as a ``corrector,'' with appropriate behavior at infinity i.e. strict sublinearity, so that $\overline{H}(p)$ is unique. If an effective Hamiltonian and a corresponding corrector can be found, then we see that $\phi^\epsilon$ converges to a function $\phi$ that satisfies $\phi_t + \overline{H}(D\phi) = 0,$ provided that we also ensure that $\phi < 0$ so $u^\epsilon \rightarrow 0$ due to \eqref{hopfcole}, which would then allow us to apply \eqref{blah2}.
%In addition, if we examine \eqref{cellproblem} when for example $c(z)$ is a positive constant and $p = 0$, we can see that $\overline{H}(0) = -c$ in this situation because the corrector $v$ would simply be a constant function. This means that $\phi$ could become positive, which is incongruous with the fact that $-\infty \leq \phi^\epsilon \leq 0$ by \eqref{hopfcole}.
Therefore, $\phi$ should satisfy the Hamilton-Jacobi variational inequality
$$
\max(\phi_t + \overline{H}(D\phi), \phi) = 0,
$$
which combined with the initial condition at $t = 0$ is precisely \eqref{effectiveequation}. Then \eqref{hopfcole}, the fact that $\phi^\epsilon \rightarrow \phi$, and an additional argument to show that $u^\epsilon \rightarrow 1$ on the set $\{\phi = 0\}$ imply that $u^\epsilon$ satisfies the behavior described by Theorem \ref{maintheorem}.
\section{Estimates for the Approximate Cell Problem}
\label{sec:estimates}
As indicated in Section \ref{sec:maintheorem}, the key to proving Theorem \ref{maintheorem} is to find an effective Hamiltonian $\overline{H}(p)$ so that appropriate correctors exist. This problem in the case when $f$ is an almost periodic case was solved in \cite{almostperiodicpaper}, but the techniques of that work cannot be applied directly due to the lack of compactness in the stationary ergodic setting. We will use the framework of \cite{takispaper}, which defines $\overline{H}(p)$ as a weak limit of $\lambda v^\lambda(0, \omega)$, where $v^\lambda$ is the solution of the approximate cell problem
\begin{equation}
\label{approxcellproblemomega}
\lambda v^\lambda(z, \omega) + \bar{J} -\int J(y) \exp(-y\cdot p) \exp(v^\lambda(z-y, \omega) - v^\lambda(z, \omega)) dy - c(z, \omega) = 0,
\end{equation}
and then finds a strictly sublinear at infinity (approximate) subcorrector also by a weak limit. This approach requires \emph{a priori} estimates on $v^\lambda$. In \cite{takispaper} the estimates are obtained by Bernstein's method, but new methods are required for this nonlocal setting. Therefore our objective in this section is to prove local oscillation bounds for solutions of this equation that are uniform in $\lambda$ and $\omega$.
%and $v^\lambda(y-z, \tau_{z}\omega)$ is a solution of (\ref{approxcellproblemomega}) because $c(z, \omega)$ is stationary.
\begin{proposition} \label{osctheorem}
Assume \eqref{fassumption}-\eqref{Jassumption}, \eqref{stationarityassumption}, \eqref{c0assumption}. If $\omega \in \Omega$ and $R \geq \frac{r_1}{2}$, then
\begin{equation} \label{boundedoscbound}
\osc_{B(0, R)} v^\lambda(\cdot, \omega) \leq CR,
\end{equation}
where $C= C(K, p, \rho)$ is uniform in $\lambda, \omega$, and $R$. In particular, for any $R > 0$,
\begin{equation} \label{firstoscbound}
\limsup_{\lambda \rightarrow 0} \osc_{B(0, \frac{R}{\lambda})} \lambda v^\lambda(\cdot, \omega) \leq  CR.
\end{equation}
\end{proposition}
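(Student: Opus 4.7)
The plan is to deduce \eqref{boundedoscbound} from three ingredients: a uniform $L^\infty$ bound on $\lambda v^\lambda$, a uniform ``one-sided'' local growth estimate on balls of radius $r_1$, and a chaining argument. The corollary \eqref{firstoscbound} then follows by the trivial rescaling $\osc_{B(0, R/\lambda)} \lambda v^\lambda(\cdot, \omega) = \lambda \cdot \osc_{B(0, R/\lambda)} v^\lambda(\cdot, \omega) \leq \lambda \cdot C(R/\lambda) = CR$.

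First I would write $\tilde{J}(y) := J(y) e^{-y \cdot p}$ and test \eqref{approxcellproblemomega} against constants: a constant $v \equiv M$ is a supersolution (respectively, subsolution) exactly when $\lambda M \geq c_{\max} + \int \tilde{J} - \bar{J}$ (respectively, $\lambda M \leq c_{\min} + \int \tilde{J} - \bar{J}$). The comparison principle for the monotone equation \eqref{approxcellproblemomega} then yields
\[
c_{\min} + \int \tilde{J}(y)\,dy - \bar{J} \;\leq\; \lambda v^\lambda(z, \omega) \;\leq\; c_{\max} + \int \tilde{J}(y)\,dy - \bar{J},
\]
uniformly in $z, \omega, \lambda$. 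Substituting back, the integral on the left of \eqref{approxcellproblemomega} lies in $[\int \tilde{J} - \rho,\, \int \tilde{J} + \rho] =: [\Lambda_-, \Lambda_+]$, and the key hypothesis $\rho < \bar{J} \leq \int \tilde{J}$ guarantees $\Lambda_- > 0$.

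The heart of the argument is the uniform local upper bound
\[
v^\lambda(z) - v^\lambda(z_0) \leq C_0(K, p, \rho) \quad \text{for every } z_0 \text{ and every } z \in B(z_0, r_1).
\]
Since $\tilde{J}(y) \geq A e^{-r_1|p|} =: A'$ on $B(0, r_1)$, evaluating \eqref{approxcellproblemomega} at $z_0$ and restricting the nonnegative integrand to a small neighborhood of $y = z_0 - z$ yields, heuristically,
\[
A' \cdot e^{v^\lambda(z) - v^\lambda(z_0)} \cdot (\text{small volume}) \;\leq\; \Lambda_+,
\]
which forces the exponent to be universally bounded. I expect this to be the principal obstacle, since transferring the integral control into a pointwise estimate requires some regularity of $v^\lambda$ that is uniform in $\lambda$ and $\omega$. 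The continuity of $v^\lambda$ should be read off from the reformulation $g(z)\bigl[\lambda \log g(z) + \bar{J} - c(z)\bigr] = \int \tilde{J}(y) g(z-y)\,dy$ with $g = e^{v^\lambda}$, whose right-hand side is continuous in $z$; combined with the $L^\infty$ bound on $\lambda v^\lambda$ and the invertibility of the left-hand side in $g$, one extracts a modulus of continuity for $v^\lambda$ that is uniform in the relevant parameters, enough to fix a universal volume in the estimate above. The Lipschitz constant $K$ of $c$ enters through this regularity argument.

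Finally, for $R \geq r_1/2$ and any $z_1, z_2 \in B(0, R)$, I would chain them by a sequence of points $\zeta_0 = z_1, \zeta_1, \ldots, \zeta_N = z_2$ with $|\zeta_{k+1} - \zeta_k| < r_1$ and $N \leq \lceil 2R/r_1 \rceil \leq 4R/r_1$; applying the local upper bound along each link, once in each direction (note that $\zeta_k \in B(\zeta_{k+1}, r_1)$ as well), gives $|v^\lambda(z_1) - v^\lambda(z_2)| \leq N C_0 \leq CR$ for $C = 4C_0/r_1$, which is \eqref{boundedoscbound}.
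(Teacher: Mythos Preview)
Your framing is sound: the constant--test bounds on $\lambda v^\lambda$ and the resulting two--sided bound $\Lambda_-\le \Phi^\lambda(z)\le\Lambda_+$ with $\Lambda_-=\int \tilde J-\rho>0$ are correct and are exactly where the hypothesis \eqref{c0assumption} enters. The chaining at the end is also fine. The difficulty is entirely in the step you flag as the ``principal obstacle,'' and the argument you propose for it does not close.

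The integral control $\int \tilde J(y)\,e^{v^\lambda(z_0-y)-v^\lambda(z_0)}\,dy\le\Lambda_+$ does not yield a pointwise bound on $v^\lambda(z)-v^\lambda(z_0)$ unless you can pin down a $\lambda$--independent neighborhood of $y=z_0-z$ on which the integrand is comparable to its value at $z$. Your proposed route is to read off a uniform modulus of continuity for $v^\lambda$ from $g(z)\bigl[\lambda\log g(z)+\bar J-c(z)\bigr]=(\tilde J*g)(z)$ with $g=e^{v^\lambda}$. The invertibility of the left side in $g$ is fine (the bracket is $\Phi^\lambda\in[\Lambda_-,\Lambda_+]$), but the modulus of continuity of the right side is \emph{not} uniform in $\lambda$: $g=e^{v^\lambda}$ is only known to satisfy $\lambda\log g\in[\,\cdot\,,\,\cdot\,]$, so $g$ itself is of order $e^{\pm C/\lambda}$, and the continuity of $\tilde J*g$ (whether estimated by the modulus of $g$ or by $\|g\|_{L^1_{\mathrm{loc}}}$ times the modulus of $\tilde J$) degenerates as $\lambda\to 0$. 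A ratio version $\frac{(\tilde J*g)(z)}{(\tilde J*g)(z')}$ runs into the same problem near $\partial\{\tilde J>0\}$, where no pointwise comparability of $\tilde J(\cdot)$ and $\tilde J(\cdot+h)$ is available and the tail contribution of $g$ is uncontrolled.

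The paper circumvents exactly this issue by never asking for a uniform modulus of $v^\lambda$. It works instead with the convolved quantity $\Psi^\lambda(z)=\int \tilde J(y)\,e^{w^\lambda(z-y)}\,dy$ (so that $e^{w^\lambda}=\Psi^\lambda/\Phi^\lambda$), and shows by contradiction and an overlapping--balls propagation argument that $\Psi^\lambda$ is bounded above and below on $B(0,R)$ with bounds of the form $C^{\pm 1}$, $\log C\le C'R$. Combined with your $\Phi^\lambda\in[\Lambda_-,\Lambda_+]$, this gives the pointwise lower bound $w^\lambda\ge -CR$ directly; the matching upper bound uses a separate contradiction argument. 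If you want to salvage your approach, the missing ingredient is precisely such a propagation estimate for $\Psi^\lambda$ (equivalently, for local integrals of $e^{v^\lambda}$), not a modulus for $v^\lambda$.
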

It can be shown that \eqref{approxcellproblemomega} satisfies a comparison principle and is well-posed; see \cite{almostperiodicpaper} for a proof. Note that $v^\lambda$ is stationary; this is true by \eqref{stationarityassumption} and the fact that comparison holds for (\ref{approxcellproblemomega}). The bounds will be proven $\omega$-by-$\omega$; that is, we show that the bounds hold for each fixed $\omega \in \Omega$, so in this section we will suppress $\omega$ for notational convenience. \cite{takispaper} uses Bernstein's method and Sobolev inequalities to prove the needed bounds. Here we use an entirely different approach, taking into consideration the nonlocal character of the equation. Define $w^\lambda(z) := v^\lambda(z) - v^\lambda(0).$ We know that $w^\lambda$ satisfies
\begin{equation}
\lambda (w^\lambda(z) + v^\lambda(0)) + \bar{J} - \int_{B_{\overline{r}}} J(y)\exp(-y\cdot p) \exp(w^\lambda(z-y) - w^\lambda(z)) dy - c(z) = 0 \label{wlequation}
\end{equation}
Recall from \eqref{Jassumption} that the support of $J$ is contained in $B(0, \overline{r})$, and that $r_1$ is a radius such that $J(y) \geq A > 0$ on $B(0, r_1)$. We define some additional notation. Let $\Phi^\lambda(z)$ be defined to be the nonlocal term in (\ref{wlequation}):
\begin{equation} \label{philambda}
\Phi^\lambda(z) := \int_{B_{\overline{r}}} J(y) \exp(-y \cdot p) \exp(w^\lambda(z-y) - w^\lambda(z)) dy.
\end{equation}
We will also consider the quantity
\begin{equation} \label{psidefinition}
\Psi^\lambda(z):= \int_{B_{\overline{r}}} J(y)\exp(-y\cdot p)\exp(w^\lambda(z-y)) dy = \Phi^\lambda(z) \exp(w^\lambda(z)).
\end{equation}
The first step in proving oscillation bounds is to show that $w^\lambda$ is $L^\alpha_{loc}$ uniformly bounded, and the key property we will use is to show that $\Phi^\lambda$ must be uniformly bounded in $\lambda$ (depending on $p$) because by comparison the other terms are uniformly bounded.
\begin{proposition} \label{lptheorem}
Under the assumptions of Proposition \ref{osctheorem}, $\|w^\lambda\|_{L^\alpha(B_R)} \leq C$ for any $R > 0$ and for any $1 \leq \alpha < \infty$, where $C$ depends on $R, p$ but is uniform in $\lambda$.
\end{proposition}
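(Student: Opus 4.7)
My plan is to derive two-sided pointwise bounds on the nonlocal term $\Phi^\lambda(z)$ directly from \eqref{wlequation}, use them to obtain a reverse-Harnack-type inequality relating $e^{w^\lambda(z)}$ to local integrals of $e^{w^\lambda}$, and then iterate a chaining argument to pass from the integral bound at the origin to one on $B_R$.

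First, I would apply the comparison principle for \eqref{approxcellproblemomega} to constant sub- and supersolutions to deduce
$$I(p) + \inf c - \bar{J} \leq \lambda v^\lambda(z) \leq I(p) + \sup c - \bar{J},$$
where $I(p) := \int J(y) e^{-y\cdot p}\,dy$; the symmetry of $J$ and Jensen's inequality give $I(p) \geq \bar{J}$. Rewriting \eqref{wlequation} as $\Phi^\lambda(z) = \lambda v^\lambda(z) + \bar{J} - c(z)$, these estimates translate into the uniform two-sided bound
$$I(p) - \rho \leq \Phi^\lambda(z) \leq I(p) + \rho,$$
and, crucially, \eqref{c0assumption} ensures $I(p) - \rho > 0$. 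So $\Phi^\lambda$ is uniformly bounded away from both $0$ and $\infty$, which is the nonlocal substitute for the Bernstein-type gradient bound used in \cite{takispaper}.

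The upper bound on $\Phi^\lambda$, together with $J(y) e^{-y\cdot p} \geq A e^{-r_1|p|}$ on $B(0,r_1)$ and the restriction $z \in B_R$, yields the key local estimate
$$\int_{B(z,r_1)} e^{w^\lambda(y)}\,dy \leq C(p,R)\, e^{w^\lambda(z)} \qquad (z \in B_R).$$
Taking $z=0$ with $w^\lambda(0)=0$ gives $\int_{B(0,r_1)} e^{w^\lambda} \leq C$. I would then propagate this bound: averaging over $B(0,r_1)$ produces a point $z_1 \in B(0,r_1)$ with $w^\lambda(z_1) \leq \log(C/|B_{r_1}|)$, and re-applying the inequality at $z_1$ extends the integral bound to $B(z_1, r_1)$ with a controlled multiplicative loss. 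Iterating finitely many times (the number depending on $R/r_1$) covers $B_R$ and produces $\int_{B_R} e^{w^\lambda} \leq C(p,R)$; since $t^\alpha \leq C_\alpha e^t$ for $t \geq 0$, this bounds the positive part of $w^\lambda$ in $L^\alpha(B_R)$.

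For the negative part, I would exploit the lower bound on $\Phi^\lambda$ in the equivalent form $e^{-w^\lambda(z)} = \Phi^\lambda(z)/\Psi^\lambda(z) \leq (I(p)+\rho)/\Psi^\lambda(z)$. Since $\Psi^\lambda(z) \geq A e^{-r_1|p|}\int_{B(z, r_1)} e^{w^\lambda}$, at any $z$ with a positive lower bound on the local integral of $e^{w^\lambda}$ we obtain a pointwise upper bound on $e^{-w^\lambda(z)}$; an analogous chaining argument, propagating lower rather than upper integral bounds outward from $z = 0$ (where $e^{w^\lambda(0)} = 1$), then yields $\int_{B_R} e^{-w^\lambda} \leq C(p,R)$ and completes the $L^\alpha$ estimate. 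The most delicate step, in my view, is the chaining procedure: at each stage only the \emph{existence} of a good point inside $B(z_k, r_1)$ is guaranteed, not its location, so reaching every point of $B_R$ requires a careful covering argument through overlapping balls. The hypothesis $\rho < \bar{J}$ is indispensable throughout, since it is precisely what delivers the two-sided positive bounds on $\Phi^\lambda$ from which all subsequent estimates flow.
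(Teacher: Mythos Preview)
Your approach is genuinely different from the paper's and rests on a sharper observation. The paper argues by contradiction in two separate lemmas: assuming $\|w^{\lambda,+}\|_{L^\alpha(O_1)}\to\infty$ (resp.\ $\|w^{\lambda,-}\|_{L^\alpha(O_1)}\to\infty$) along a subsequence, it pushes the blow-up through a chain of overlapping balls until it collides with $w^\lambda(0)=0$ (resp.\ until $\Psi^\lambda(0)\to 0$ forces the constant $0$ to be a subsolution of \eqref{wlequation}, contradicting \eqref{c0assumption}). By contrast, you extract directly from comparison the quantitative two-sided bound
\[
I(p)-\rho\;\le\;\Phi^\lambda(z)\;\le\;I(p)+\rho,\qquad I(p)-\rho\ge \bar J-\rho>0,
\]
which the paper never states in this form; this is the real content of \eqref{c0assumption} and it makes the subsequent analysis constructive rather than by contradiction. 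What you gain is an explicit, quantitative estimate; what the paper's softer argument gains is that it avoids having to organize the chaining carefully.

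That said, the chaining you sketch for the negative part has a gap as written. Your positive-part inequality $\int_{B(z,r_1)}e^{w^\lambda}\le C\,e^{w^\lambda(z)}$ propagates \emph{upper} integral bounds outward, and the directional refinement you allude to (choosing the ``good'' point in a prescribed half-ball) makes that rigorous. For the negative part, however, you need \emph{lower} bounds on $\int_{B(z,r_1)}e^{w^\lambda}$, and the relevant inequality coming from $\Phi^\lambda\ge I(p)-\rho$ is $\int_{B(z,\bar r)}e^{w^\lambda}\ge c\,e^{w^\lambda(z)}$, over the larger radius $\bar r$. When you pass from this to a small ball with good integral, the location of that ball inside $B(z,\bar r)$ is uncontrolled, so the chain may not advance toward a given target. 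The cleanest repair is to exploit both bounds simultaneously as the single statement $u:=e^{w^\lambda}\asymp K*u$ with $K(y)=J(y)e^{-y\cdot p}$, and iterate: $c_-^{\,n}\,K^{*n}*u\le u\le c_+^{\,n}\,K^{*n}*u$. Since $K\ge A e^{-r_1|p|}\mathbf 1_{B_{r_1}}$, the iterated kernel $K^{*n}$ is bounded below on $B(0,\tfrac{n r_1}{2})$; combined with the lower bound $\int_{B(0,\bar r)}u\ge c$ (from $u(0)=1$) this yields a \emph{pointwise} lower bound $u\ge C(R,p)^{-1}$ on $B_R$, which is in fact stronger than the $L^\alpha$ control you claim and bypasses the directional chaining entirely.
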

\begin{proof}
The proposition follows if we can show that $\|w_\lambda\|_{L^\alpha(B(z, \frac{r_1}{2}))}$ is uniformly bounded in $\lambda$ for any $z$ such that $O_1 := B(z, \frac{r_1}{2}) \subset B_R$, because we can cover $B_R$ with finitely many such balls. We will now show that the positive and negative parts of $w^\lambda$ are uniformly $L^\alpha(O_1)$ bounded in Lemmas \ref{positivecase} and \ref{negativecase}.
\begin{lemma} \label{positivecase}
$\|w^{\lambda, +}\|_{L^\alpha(O_1)}$ is bounded uniformly in $\lambda$.
\end{lemma}
\begin{proof}
Suppose for a contradiction that $\|w^{\lambda, +}\|_{L^\alpha(O_1)} \rightarrow \infty$. For each $\lambda$, define $z^\lambda_{min}$ to be the point in $\overline{O_1}$ where $w^\lambda$ is minimized. We can now show that $w^\lambda(z^\lambda_{min}) \rightarrow \infty$, because if there were a subsequence where $w^\lambda(z^\lambda_{min}) \leq C$, then we would have that $\Phi^\lambda(z^\lambda_{min}) \rightarrow \infty$ as $\lambda \rightarrow 0$, because
\begin{align*}
\Phi^\lambda(z^\lambda_{min}) &= \frac{\Psi^\lambda(z^\lambda_{min})}{w^\lambda(z^\lambda_{min})} \geq \frac{1}{C} \int_{B_{r_1}} J(y)\exp(-y\cdot p)\exp(w^\lambda(z^\lambda_{min}-y)) dy \\
&\geq C_2 \int_{O_1} (w^{\lambda, +}(y))^\alpha dy \rightarrow \infty.
\end{align*}
%because $O_1 \subset B(z^\lambda_{min}, r_1)$.
The second inequality holds because $\exp(-y \cdot p)$ and $J(y)$ are bounded below away from zero on $B(0, r_1)$, and because for any $\alpha$, there exists a constant $K$ such that $\exp(x) \geq Kx^\alpha$ holds for all $x$.
%We assumed that
%$$
%\int_{O_1} (w^{\lambda, +}(y))^\alpha dy \rightarrow \infty.
%$$
This means that $\Phi^\lambda(z^\lambda_{min}) \rightarrow \infty$, which is a contradiction because $\Phi^\lambda$ must be uniformly bounded, so $w^\lambda(z^\lambda_{min}) \rightarrow \infty$, which implies that $\displaystyle \min_{\overline{O}_1} w^\lambda \rightarrow \infty$.

Now consider $O_2 := B(x_1, \frac{r_1}{2}) \subset B_R$, such that $O_1$ and $O_2$ overlap nontrivially, that is, $|O_1 \cap O_2| > 0$. Define $z^\lambda_{min, 2}$ to be the point in $\overline{O}_2$ where $w^\lambda$ is minimized. We claim that $w^\lambda(z^\lambda_{min, 2}) \rightarrow \infty$, and this follows in a very similar fashion to the preceding argument. Suppose for a contradiction that $w^\lambda(z^\lambda_{min, 2}) \leq C_2$ along a subsequence also called $\lambda$. Then because $O_1 \cap O_2 \subset B(z^\lambda_{min}, r_1)$, we have that
\begin{align*}
\Phi^\lambda(z^\lambda_{min, 2}) &= \frac{\Psi^\lambda(z^\lambda_{min, 2})}{\exp(w^\lambda(z^\lambda_{min, 2}))} \geq \frac{1}{C_2} \int_{B_{\overline{r}}} J(y)\exp(-y\cdot p)\exp(w^\lambda(z^\lambda_{min, 2}-y))) dy \\
&\geq C \int_{O_1 \cap O_2} \exp(w^\lambda(y)) dy \geq C \exp(\min_{\overline{O}_1}(w^\lambda))|O_1 \cap O_2| \rightarrow \infty,
\end{align*}
because $\displaystyle \min_{\overline{O}_1}(w^\lambda) \rightarrow \infty$. This is again a contradiction. Therefore $\displaystyle \min_{\overline{O}_2} w^\lambda \rightarrow \infty$ as $\lambda \rightarrow 0$. However, we can repeat this process finitely many times by taking a ball $O_3$ overlapping $O_2$ and so on, until the ball in question contains the origin. This yields $w^\lambda(0) \rightarrow \infty$, but $w^\lambda(0) = 0$ for all $\lambda$ by construction. Therefore we have reached a contradiction, and so $\|w^{\lambda, +}\|_{L^\alpha(O_1)}$ is uniformly bounded.
\end{proof}
\begin{lemma} \label{negativecase}
$\|w^{\lambda, -}\|_{L^\alpha(O_1)}$ is uniformly bounded in $\lambda$.
\end{lemma}
\begin{proof}\let\qed\relax
Assume for a contradiction that $\|w^{\lambda, -}\|_{L^\alpha(O_1)} \rightarrow \infty$.  First we prove a technical lemma, which essentially states that on any positive measure subset of a ball of radius $\frac{r_1}{2}$, the minimum of $w^\lambda$ over the subset cannot deviate too far from the minimum of $w^\lambda$ over the ball of radius $\frac{r_1}{2}$.
\begin{lemma} \label{minimumlemma}
Consider a ball $U = B(z, \frac{r_1}{2})$ for $z \in B(0, R)$ such that $U \subset B(0, R)$, and define $\displaystyle C_\lambda = \min_{\overline{U}} w^\lambda(\cdot)$. Then for any set $U_1 \subset U$ of positive measure, $\displaystyle [\min_{\overline{U}_1} w^\lambda(\cdot)] - C_\lambda \leq C(U_1)$, where $C(U_1)$ is uniform in $\lambda$.
\end{lemma}
\begin{proof}
Define $z^\lambda_{min}$ to be point where the minimum of $w^\lambda$ over $\overline{U}$ is achieved. If there existed $U_1$ such that $\displaystyle [\min_{\overline{U_1}} w^\lambda(\cdot)] - C_\lambda \rightarrow \infty$ (along a subsequence also called $\lambda$), then we can evaluate $\Phi^\lambda$ at $z^\lambda_{min}$ to get
\begin{align*}
\Phi^\lambda(z^\lambda_{min}) &= \int_{B_{\overline{r}}} J(y)\exp(-y\cdot p) \exp(w^\lambda(z^\lambda_{min}-y)-w^\lambda(z^\lambda_{min})) \\
&\geq C\int_{U_1} \exp(w^\lambda(y)-w^\lambda(z^\lambda_{min})) dy \geq \exp((\min_{\overline{U}_1} w^\lambda(\cdot))- w^\lambda(z^\lambda_{min}))|U_1|  \rightarrow \infty,
\end{align*}
as $\lambda \rightarrow 0$, where once again $C$ is a positive constant, and this is again a contradiction to the boundedness of $\Phi^\lambda$.
\end{proof}
Next, we will use Lemma \ref{minimumlemma} to show that $\Psi^\lambda(z) \rightarrow 0$ as $\lambda \rightarrow 0$.
\begin{lemma} \label{psilemma}
Under the assumption $\|w^{\lambda, -}\|_{L^\alpha(O_1)} \rightarrow \infty$, $\Psi^\lambda(z) \rightarrow 0$ for all $z \in B(0, R)$ as $\lambda \rightarrow 0$.
\end{lemma}
\begin{proof}
We first use Lemma \ref{minimumlemma} to show that for any ball $U = B(z, \frac{r_1}{2}) \subset B(0, R)$,
\begin{equation} \label{lemma44claim}
\min_{\overline{U}} w^\lambda(\cdot) \rightarrow -\infty \text{ as } \lambda \rightarrow 0.
\end{equation}
This follows by an overlapping balls argument. Because $\|w^{\lambda, -}\|_{L^\alpha(O_1)} \rightarrow \infty$, we know that $C_\lambda := \min_{\overline{O}_1} w^\lambda(\cdot) \rightarrow -\infty \text{ as } \lambda \rightarrow 0.$ Consider a ball $O_2$ overlapping $O_1$ so that $|O_1 \cap O_2| > 0$. By Lemma \ref{minimumlemma},
$$
\min_{\overline{O_1 \cap O_2}} w^\lambda(\cdot) - C_\lambda \leq C < \infty \text{ uniformly in } \lambda.
$$
Since $C_\lambda \rightarrow -\infty$ as $\lambda \rightarrow 0$, then $\displaystyle \min_{\overline{O_1 \cap O_2}} w^\lambda(\cdot) \rightarrow -\infty,$ which implies that $\min_{\overline{O}_2} w^\lambda(\cdot) \rightarrow -\infty.$ However, this argument can be applied finitely many times using a ball $O_3$ overlapping $O_2$, etc, until our final ball is $U$, so we have shown \eqref{lemma44claim}, which we will now use to prove Lemma \ref{psilemma}. Suppose for a contradiction that for some $z$ and some subsequence of $\lambda$, $\Psi^\lambda(z) \rightarrow C_3 > 0$. %Because $0 < K_1 \leq \exp(-y\cdot p) \leq K_2$ for all $y \in B_r$
Then by the symmetry of $J$, there exists another subsequence (denoted $\lambda$) and a ball $U_1 \subset B(z, \overline{r}) \cap B(0, R)$ of radius $\frac{r_1}{2}$ such that
$$
\liminf_{\lambda \rightarrow 0} \int_{U_1} \exp(w^\lambda(y)) dy \geq C_4 > 0.
$$
Define $z^\lambda_{min}$ to be the point where $w^\lambda$ is minimized over $\overline{U}_1$, and evaluate $\Phi^\lambda$ at $z^\lambda_{min}$ to get
\begin{align*}
\Phi^\lambda(z^\lambda_{min}) &= \int_{B_{\overline{r}}} J(y)\exp(-y\cdot p)\exp(w^\lambda(z^\lambda_{min}-y) - w^\lambda(z^\lambda_{min})) dy \\
&\geq C \int_{U_1} \exp(w^\lambda(y) - w^\lambda(z^\lambda_{min})) dy \\
&\geq \frac{\int_{U_1}J(y-z^\lambda_{min})\exp(-(y-z^\lambda_{min}) \cdot p)\exp(w^\lambda(y)) dy}{\exp(w^\lambda(z^\lambda_{min}))} \\
&\geq \frac{C\int_{U_1} \exp(w^\lambda(y))dy}{\exp(w^\lambda(z^\lambda_{min}))} \geq \frac{CC_4}{\exp(w^\lambda(z^\lambda_{min}))} \rightarrow \infty
\end{align*}
by \eqref{lemma44claim}, and this is again a contradiction.
\end{proof}
Note that $\Psi^\lambda(0) = \Phi^\lambda(0)$ by construction, and we will now use this fact and Lemma \ref{psilemma} to finish the proof. If we evaluate (\ref{wlequation}) at $z = 0$, because $w^\lambda(0) = 0$ by construction, we have $\lambda v^\lambda(0) + \bar{J} - \Psi^\lambda(0) - c(0) = 0.$ We can then write $v^\lambda(0) = \frac{1}{\lambda}(c(0) - \bar{J} + \Psi^\lambda(0)).$ Now we consider (\ref{wlequation}) with this value of $v^\lambda(0)$, and show that the function $\varphi(z) := 0$ is a subsolution of (\ref{wlequation}). Inserting $\varphi$ into (\ref{wlequation}) in place of $w^\lambda$, we have that the left hand side is
\begin{equation} \label{lhs}
\lambda (v^\lambda(0) + 0) + \bar{J} - \int_{B(0, \overline{r})} J(y) \exp(-y \cdot p)dy - c(z) \leq c(0) - \bar{J} + \Psi^\lambda(0) - c(z),
\end{equation}
where the inequality follows due to our expression for $v^\lambda(0)$ and the symmetry of $J$.
%$$\int_{B(0, \overline{r})} J(y) \exp(-y \cdot p)dy \geq \bar{J}.$$
Then because $\Psi^\lambda(0) \rightarrow 0$ by Lemma \ref{psilemma}, we have that for $\lambda$ sufficiently small (\ref{lhs}) is nonpositive by (\ref{c0assumption}). Therefore, by the comparison principle for (\ref{wlequation}), $w^\lambda \geq \varphi \equiv 0$ for sufficiently small $\lambda$, which means that $(w^\lambda)^-$ cannot be unbounded in $L^\alpha(O_1)$. This finishes the proof of Lemma \ref{negativecase}.
\end{proof}
The combination of Lemmas \ref{positivecase} and \ref{negativecase} imply Proposition \ref{lptheorem}.
\end{proof}
In this proof, we have actually proven a stronger statement than Proposition \ref{lptheorem}; we have actually proven that $\Psi^\lambda$ is locally uniformly bounded above and bounded below away from zero. The proof is very similar to the proof of Proposition \ref{lptheorem}, so we will state the proposition and sketch the proof.
\begin{proposition} \label{psiboundprop}
Let $R \geq \frac{r_1}{2}$. Then under the assumptions of Proposition \ref{osctheorem}, for $z \in B(0, R)$,
\begin{equation} \label{uniformpsibound}
C^{-1} \leq \Psi^\lambda(z) \leq C
\end{equation}
uniformly in $\lambda$ and $z$, for some constant $C = C(|p|) > 1$ that satisfies $\log(C) \leq C'R$, where $C'$ is a constant independent of $R$.
\end{proposition}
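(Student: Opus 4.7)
I would extract pointwise bounds on $\Psi^\lambda$ from a quantitative version of the overlapping-balls argument used for Proposition \ref{lptheorem}. Since $\Psi^\lambda(z) = \Phi^\lambda(z)\, e^{w^\lambda(z)}$, the first task is to pin down uniform positive bounds on $\Phi^\lambda$. Comparing $v^\lambda$ with constant sub- and supersolutions of \eqref{approxcellproblemomega} yields $K(p) + c_{\min} - \bar{J} \leq \lambda v^\lambda(z) \leq K(p) + c_{\max} - \bar{J}$, where $K(p) := \int J(y) e^{-y \cdot p}\, dy \geq \bar{J}$ by Jensen and the symmetry of $J$. Combining with the key assumption $\rho < \bar{J}$ from \eqref{c0assumption} gives
\[
0 < \bar{J} - \rho \leq \Phi^\lambda(z) = \lambda v^\lambda(z) + \bar{J} - c(z) \leq K(p) + \rho
\]
uniformly in $\lambda$, $z$, $\omega$.

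The core ingredient is a \emph{one-step} estimate, essentially a quantitative form of Lemma \ref{minimumlemma}. For any two balls $U, U'$ of radius $r_1/2$ with $|U \cap U'| > 0$, evaluating $\Phi^\lambda$ at $z^\ast := \mathrm{argmin}_{\overline{U'}} w^\lambda$ and restricting the integrand to $U \cap U' \subset U' \subset B(z^\ast, r_1)$ (so that $J \geq A$ and $e^{-y \cdot p}$ are bounded below), together with the upper bound on $\Phi^\lambda$, gives
\[
\min_{\overline{U'}} w^\lambda \geq \min_{\overline{U}} w^\lambda - C_3
\]
for a constant $C_3 = C_3(p, r_1, A)$ independent of $\lambda$, $R$, and $\omega$. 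By symmetry, $|\min_{\overline{U'}} w^\lambda - \min_{\overline{U}} w^\lambda| \leq C_3$.

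\textbf{Upper bound.} Suppose $\Psi^\lambda(z_0) = K$ for some $z_0 \in B(0, R)$. Pigeonholing $\Psi^\lambda(z_0) \geq c \int_{B(z_0, r_1)} e^{w^\lambda}\, dy$ over a cover of $B(z_0, r_1)$ by $O(1)$ balls of radius $r_1/2$ produces a subball $U_0$ with $\int_{U_0} e^{w^\lambda}\, dy \geq c' K$. Applying the $\Phi^\lambda$ upper bound at the minimizer of $w^\lambda$ on $\overline{U_0}$ yields $\min_{\overline{U_0}} w^\lambda \geq \log K - C_1$. I would then construct a chain $U_0, U_1, \ldots, U_N$ of overlapping balls of radius $r_1/2$ terminating at $U_N \ni 0$, with $N \leq C_2 R/r_1$ steps. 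Iterating the one-step estimate $N$ times gives $\min_{\overline{U_N}} w^\lambda \geq \log K - C_1 - N C_3$. Since $0 \in U_N$ and $w^\lambda(0) = 0$, we have $\min_{\overline{U_N}} w^\lambda \leq 0$, forcing $\log K \leq C_1 + N C_3 \leq C' R$.

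\textbf{Lower bound and main obstacle.} The lower bound is proved symmetrically, anchored at the origin via the explicit estimate $\Psi^\lambda(0) = \Phi^\lambda(0) \geq \bar{J} - \rho > 0$. The same pigeonhole-plus-evaluation produces a ball $V_0 \subset B(0, r_1)$ with $\min_{\overline{V_0}} w^\lambda \geq -C_4$. Chaining $V_0, \ldots, V_N$ to a ball $V_N \subset B(z_1, r_1)$ and applying the one-step estimate gives $\min_{\overline{V_N}} w^\lambda \geq -C_4 - N C_3 \geq -C' R$, whence $\Psi^\lambda(z_1) \geq c \int_{V_N} e^{w^\lambda}\, dy \geq c |V_N| e^{\min_{\overline{V_N}} w^\lambda} \geq C^{-1} e^{-C' R}$. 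The main obstacle I expect is the quantitative bookkeeping: verifying that $C_3$ is genuinely independent of $\lambda$ and $R$, and tracking the constants along the full chain so that they remain uniform (the overlapping-ball argument in the proof of Proposition \ref{lptheorem} is qualitative in these respects, so a careful quantitative reworking is required).
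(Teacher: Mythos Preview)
Your approach is essentially the same as the paper's: both anchor at the origin (where $w^\lambda(0)=0$, equivalently $\Psi^\lambda(0)=\Phi^\lambda(0)$) and propagate bounds outward along a chain of overlapping balls of radius $r_1/2$, with the paper iterating directly on $\Psi^\lambda$ in annuli and you iterating on $\min_{\overline U} w^\lambda$; since $\Psi^\lambda=\Phi^\lambda e^{w^\lambda}$ and you make the two-sided bound $\bar J-\rho\le\Phi^\lambda\le K(p)+\rho$ explicit via comparison with constants, the two formulations are equivalent. One minor slip to fix: in your upper-bound pigeonhole step you need the inequality $\Psi^\lambda(z_0)\le C\int_{B(z_0,\bar r)}e^{w^\lambda}\,dy$ (coming from the \emph{upper} bounds on $J$ and $e^{-y\cdot p}$ over the full support $B(0,\bar r)$), not $\Psi^\lambda(z_0)\ge c\int_{B(z_0,r_1)}e^{w^\lambda}\,dy$, in order to conclude that some subball $U_0$ of radius $r_1/2$ carries $\int_{U_0}e^{w^\lambda}\,dy\ge c'K$.
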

\begin{proof}
The proof of Proposition \ref{lptheorem} implies that there exists a uniform-in-$\lambda$ constant $C$ such that $C^{-1} \leq \Psi^\lambda(0) \leq C.$ We now consider $\Psi^\lambda(z)$ for $z \in B(0, \frac{r_1}{2})$. We first show that
\begin{equation}
\Psi^\lambda(z) \leq CC_2, \label{blah11}
\end{equation}
for a constant $C_2$ uniform in $\lambda$. Suppose there were no such constant $C_2$. Then there exists a subsequence of $\lambda$ such that $\Psi^\lambda(z) \rightarrow \infty$ as $\lambda \rightarrow \infty$. We can then repeat the argument of Lemma \ref{positivecase} to reach a contradiction, noting that $\Phi^\lambda$ is uniformly bounded above by $K$, $J$ is uniformly bounded below by $A$ on $B(0, \frac{r_1}{2})$, and $|B(z, \frac{r_1}{2}) \cap B(0, \frac{r_1}{2})| \geq \frac{\pi r_1^2}{16},$ all constants which are independent of $\lambda$, which means that $C_2$ is also independent of $\lambda$. We can then show that for $z \in B(0, r_1) \backslash B(0, \frac{r_1}{2})$, $\Psi^\lambda(z) \leq CC_2^2,$ which is proven by considering the overlap of $B(z, \frac{r_1}{2})$ with a ball of the form $B(z_1, \frac{r_1}{2})$ with $z_1 \in B(0, \frac{r_1}{2})$, because we know that $\Psi^\lambda(z_1) \leq CC_2$ by \eqref{blah11}. This procedure can be repeated for $z \in B(0, \frac{3r_1}{2}) \backslash B(0, r_1)$, and so forth. We can write for an arbitrary $R \geq \frac{r_1}{2}$ that $\Psi^\lambda_z \leq CC_2^L, L = \lceil\frac{2R}{r_1}\rceil,$ which gives us half of Proposition \ref{psiboundprop}.

We next show that $\Psi^\lambda(z) \geq C^{-1}C_3,$ for a constant $C_3$ uniform in $\lambda$. Suppose that there does not exist such a constant. Then there exists a subsequence of $\lambda$ such that $\Psi^\lambda(z) \rightarrow 0$ as $\lambda \rightarrow \infty$. We can now repeat the argument of Lemma \ref{negativecase} to reach a contradiction, again noting that the relevant constants, this time with the addition of $\rho$ from \eqref{c0assumption}, do not depend on $\lambda$, which implies that $C_3$ is independent of $\lambda$. Then, we can show that for $z \in B(z, r_1) \backslash B(0, \frac{r_1}{2})$, $\Psi^\lambda(z) \geq C^{-1}C_3^2.$ This can be shown using the overlap of $B(z, \frac{r_1}{2})$ with a ball $B(z_1, \frac{r_1}{2})$ with $z_1 \in B(0, \frac{r_1}{2})$. We can now continue this process to prove an estimate for $\Psi(z)$ for $z \in B(0, \frac{3r_1}{2}) \backslash B(0, r_1)$, and so forth, until we have covered $B(0, R)$. This finishes the proof of \eqref{uniformpsibound}.
\end{proof}
We will now prove Proposition \ref{osctheorem}. We show that \eqref{boundedoscbound} holds for $w^\lambda$ because this is equivalent to showing that the same inequality holds for $v^\lambda$.
\begin{proof}[Proof of Proposition \ref{osctheorem}]
We know that by Proposition \ref{psiboundprop}, \eqref{uniformpsibound} holds uniformly in $\lambda$. Therefore, since $\Phi^\lambda(z)\leq K$, \eqref{psidefinition} implies that $w^\lambda(z) \geq -CR$ holds for all $z$ and all $\lambda$, and so $\inf_{B(0, R)} w^\lambda(\cdot) \geq -CR.$

To prove a corresponding upper bound, suppose for a contradiction that there exists sequences $z_n \in B(0, R)$ and $\lambda_n$ such that $w^{\lambda_n}(z_n) \rightarrow \infty$. We can use the same argument as in the proof of Proposition 4.3 of \cite{almostperiodicpaper} to show that $\lambda w^\lambda(z_n) \rightarrow 0$, because here we have that $z_n$ is contained in a bounded set, which is what was needed for that argument to hold. Combining this with \eqref{uniformpsibound} and evaluating (\ref{wlequation}) at $z_n$ for the subsequence $\lambda_n$, we get $\lambda v^{\lambda_n}(0) = c(z_n) - 1 + o_\lambda(1),$ which means that due to (\ref{c0assumption}), we can take $\lambda_n \rightarrow 0$ to conclude that $\lambda v^\lambda(0) < \inf_{\mathbb{R}^n} c(\cdot),$
but this is a contradiction because the constant function
$$
\varphi(z) = \lambda^{-1} \inf_{\mathbb{R}^n} c(\cdot)
$$
is a subsolution of \eqref{approxcellproblemomega}. Because the constants involved are independent of $\lambda$, in fact we have $\sup_{B(0, R)} w^\lambda(\cdot) \leq CR.$ This gives us \eqref{boundedoscbound}.
\end{proof}
In addition, by defining $w^\lambda(z) := v^\lambda(z) - v^\lambda(\hat{z})$ for any $\hat{z} \in \mathbb{R}^n$ and using the same arguments used to prove Proposition \ref{osctheorem}, we can show that (\ref{boundedoscbound}) holds for balls centered at any $\hat{z} \in \mathbb{R}^n$. The constant is also uniform in $\hat{z}$ because it depends only on $\rho$ from \eqref{c0assumption}, $p$, and $K$.
\begin{theorem} \label{finalosctheorem}
Let $\hat{z} \in \mathbb{R}^n$. Under the assumptions of Proposition \ref{osctheorem}, if $R \geq \frac{r_1}{2}$ there exists a constant $C$ which is uniform in $\lambda, R, \hat{z}$ such that
\begin{equation} \label{finaloscbound1}
\osc_{B(\hat{z}, R)} v^\lambda(\cdot, \omega) \leq CR.
\end{equation}
In particular, for any $y \in \mathbb{R}^n$ and $R > 0$,
\begin{equation}\label{finaloscbound}
\limsup_{\lambda \rightarrow 0} \osc_{B(\frac{y}{\lambda}, \frac{R}{\lambda})} \lambda v^\lambda(\cdot, \omega) \leq CR.
\end{equation}
\end{theorem}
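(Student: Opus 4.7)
The plan is a translation argument: fix an arbitrary $\hat{z} \in \mathbb{R}^n$ and repeat the entire argument chain of Section \ref{sec:estimates} verbatim, but with the normalization point shifted from $0$ to $\hat{z}$. Concretely, redefine $w^\lambda(z) := v^\lambda(z) - v^\lambda(\hat{z})$, so that $w^\lambda(\hat{z}) = 0$, and substitute into \eqref{approxcellproblemomega}. The resulting equation has exactly the structure of \eqref{wlequation}, differing only in that $v^\lambda(0)$ is replaced by $v^\lambda(\hat{z})$ in the zeroth-order term; the kernel $J$, the coefficient $c$, and the nonlocal nonlinearity are unchanged, and in particular translation-invariant.

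The key steps are then to rerun, in order, Lemma \ref{positivecase}, Lemma \ref{negativecase}, Proposition \ref{lptheorem}, Proposition \ref{psiboundprop}, and the proof of Proposition \ref{osctheorem}, with every occurrence of $0$ replaced by $\hat{z}$ and with the chain of overlapping balls of radius $r_1/2$ now anchored at $\hat{z}$ rather than at the origin. At each step I would audit the constants that appear: the comparison-principle bound $\Phi^\lambda \leq K$, the lower bounds on $J$ and on $\exp(-y \cdot p)$ over $B(0, r_1)$, the measure lower bound $|B(z, r_1/2) \cap B(z_1, r_1/2)| \geq c(r_1)$ in consecutive overlapping balls, the technical Lemma \ref{minimumlemma}, and — most importantly — the subsolution comparison $\varphi \equiv 0$ driven by the strict gap $\rho < \bar{J}$ from \eqref{c0assumption}. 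Every one of these ingredients depends only on $|p|$, $K$, $\rho$, $A$, $r_1$, and $\bar{J}$, and none on the choice of center; therefore the $L^\alpha$ bound, the $\Psi^\lambda$ two-sided bound $C^{-1} \leq \Psi^\lambda(z) \leq C$ with $\log C \lesssim R$, and finally the oscillation bound $\osc_{B(\hat{z}, R)} v^\lambda \leq CR$ all survive the translation with the same constant $C$. This gives \eqref{finaloscbound1}.

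The rescaled statement \eqref{finaloscbound} is then a one-line consequence: apply \eqref{finaloscbound1} at the point $\hat{z} = y/\lambda$ with radius $R/\lambda \geq r_1/2$ (valid for all sufficiently small $\lambda$), obtaining $\osc_{B(y/\lambda,\, R/\lambda)} v^\lambda(\cdot, \omega) \leq C \cdot (R/\lambda)$, and multiply through by $\lambda$ before taking $\limsup_{\lambda \to 0}$.

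The only place where one has to think rather than mechanically translate is in the proof of Proposition \ref{osctheorem} itself, where the upper-bound contradiction argument cites Proposition 4.3 of \cite{almostperiodicpaper} to produce $\lambda w^\lambda(z_n) \to 0$ along a sequence $z_n$ at which $w^{\lambda_n}(z_n) \to \infty$. That external result requires only that $z_n$ remain in a bounded set relative to the normalization point, which is automatic here since $z_n \in B(\hat{z}, R)$. I expect this to be the main (and only) potential obstacle — namely, verifying that no step in the original proof silently used that $0$ is the normalization point in a way that fails for a general $\hat{z}$ — but in each instance the dependence is only through $|p|$, $K$, and $\rho$, so uniformity in $\hat{z}$ persists.
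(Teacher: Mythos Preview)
Your proposal is correct and matches the paper's own argument essentially verbatim: the paper simply remarks (immediately before the theorem) that one redefines $w^\lambda(z) := v^\lambda(z) - v^\lambda(\hat{z})$ and reruns the proof of Proposition~\ref{osctheorem}, noting that the constant depends only on $\rho$, $p$, and $K$ and is therefore uniform in $\hat{z}$. Your audit of the constants and the one-line rescaling for \eqref{finaloscbound} are exactly what the paper leaves implicit.
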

To conclude this section, we give a modulus of continuity estimate for $v^\lambda$. Because the proof is the same as Lemma 4.7 of \cite{almostperiodicpaper}, we omit the proof.
\begin{lemma} \label{plipschitzlemma}
There exist $C_3, C_4 > 0$ such that for each $\lambda > 0$, $p_1, p_2 \in \mathbb{R}^n$
\begin{equation} \label{plipschitz}
\sup_{y \in \mathbb{R}^n} |\lambda v^\lambda(y; p_1) - \lambda v^\lambda(y; p_2)| \leq C_3\exp(C_4(1+|p_1|+|p_2|))|p_1 - p_2|.
\end{equation}
\end{lemma}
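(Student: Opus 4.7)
The plan is to exploit the comparison principle for \eqref{approxcellproblemomega} together with the crucial algebraic feature that the nonlocal term depends on $v^\lambda$ only through differences $v^\lambda(z-y) - v^\lambda(z)$, and is therefore invariant under the addition of a constant to $v^\lambda$. Writing $v_i(z) := v^\lambda(z;p_i)$ and
\begin{equation*}
F[v;p](z) := \lambda v(z) + \bar{J} - \int_{B_{\overline{r}}} J(y)\, e^{-y\cdot p}\, e^{v(z-y)-v(z)}\,dy - c(z),
\end{equation*}
so that $F[v_i;p_i]\equiv 0$, my first step is to substitute $v_2$ into the $p_1$-equation to produce the discrepancy
\begin{equation*}
E(z) := F[v_2;p_1](z) = \int_{B_{\overline{r}}} J(y)\left(e^{-y\cdot p_2}-e^{-y\cdot p_1}\right)e^{v_2(z-y)-v_2(z)}\,dy.
\end{equation*}

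Next I would derive a uniform bound $|E(z)| \le M$ with $M$ of the desired form. Since $|y|\le \overline{r}$ on $\operatorname{supp} J$, the mean value theorem yields $|e^{-y\cdot p_2}-e^{-y\cdot p_1}| \le \overline{r}\, e^{\overline{r}(|p_1|+|p_2|)}|p_1-p_2|$. Applying Theorem \ref{finalosctheorem} on the ball $B(z,\max(\overline{r},r_1/2))$ controls $|v_2(z-y)-v_2(z)|$ by a constant depending only on $K$, $\rho$, and $|p_2|$, and hence $e^{v_2(z-y)-v_2(z)}$ by the exponential of such a constant. Combined with $\int J \le \bar{J}$, this gives $|E(z)| \le C_3 \exp(C_4(1+|p_1|+|p_2|))|p_1-p_2| =: M$ for appropriate $C_3, C_4$.

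The final step uses affine invariance: for any constant $\kappa$, $F[v_2+\kappa; p_1] = F[v_2;p_1] + \lambda\kappa = E(z)+\lambda\kappa$. Choosing $\kappa = M/\lambda$ makes the right-hand side nonnegative, so $v_2+M/\lambda$ is a supersolution of the $p_1$-equation. The comparison principle for \eqref{approxcellproblemomega} (cited after Proposition \ref{osctheorem}) then gives $v_1 \le v_2 + M/\lambda$; interchanging $p_1$ and $p_2$ yields the reverse inequality. Multiplying by $\lambda$ produces \eqref{plipschitz}.

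The main obstacle is tracking the $p$-dependence of the oscillation constant in Theorem \ref{finalosctheorem}. The factor $e^{-y\cdot p}$ enters $\Psi^\lambda$ and $\Phi^\lambda$ with growth $e^{\overline{r}|p|}$, and the bounds in Propositions \ref{lptheorem} and \ref{psiboundprop} are obtained by iterating an overlapping-balls argument whose base constants pick up the $|p|$-dependence multiplicatively. One must reinspect those proofs to confirm that the resulting oscillation constant $C$ in \eqref{finaloscbound1} grows at most exponentially in $|p|$, so that it can be safely absorbed into $\exp(C_4(1+|p_1|+|p_2|))$. Everything else is a routine comparison argument enabled by the constant-invariance of the nonlocal nonlinearity.
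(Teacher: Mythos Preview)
The paper does not actually prove this lemma; it simply states that the proof is the same as Lemma~4.7 of \cite{almostperiodicpaper} and omits it. Your comparison argument is the standard one and is correct in outline: substitute $v_2$ into the $p_1$-equation, bound the resulting error uniformly by some $M$, and then use that $v_2 \pm M/\lambda$ are super/subsolutions of the $p_1$-equation together with the comparison principle.

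There is, however, a cleaner way to bound the error term $E(z)$ which avoids the obstacle you flag (tracking the $p$-dependence of the oscillation constant in Theorem~\ref{finalosctheorem}). Instead of bounding $e^{v_2(z-y)-v_2(z)}$ via the oscillation estimate, factor
\[
e^{-y\cdot p_2}-e^{-y\cdot p_1} = e^{-y\cdot p_2}\bigl(1-e^{\,y\cdot(p_2-p_1)}\bigr),
\]
so that
\[
|E(z)| \le \sup_{|y|\le \overline{r}}\bigl|1-e^{\,y\cdot(p_2-p_1)}\bigr|\,\int_{B_{\overline{r}}} J(y)\,e^{-y\cdot p_2}\,e^{v_2(z-y)-v_2(z)}\,dy.
\]
The first factor is at most $\overline{r}\,e^{\overline{r}|p_1-p_2|}|p_1-p_2|$, and the integral is exactly $\Phi^\lambda(z)$ for the $p_2$-problem, which by \eqref{approxcellproblemomega} equals $\lambda v_2(z)+\bar{J}-c(z)$. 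A trivial comparison with the constant $\lambda^{-1}\bigl(\sup c + \int J(y)e^{-y\cdot p_2}dy - \bar{J}\bigr)$ (a supersolution of the $p_2$-equation) yields $\Phi^\lambda(z)\le \rho + \bar{J}\,e^{\overline{r}|p_2|}$. This gives $|E(z)|\le C_3\exp(C_4(1+|p_1|+|p_2|))|p_1-p_2|$ directly, with explicit constants and without any appeal to Section~\ref{sec:estimates}. The rest of your argument then goes through unchanged.
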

\section{The Effective Hamiltonian $\overline{H}(p)$}
\label{sec:effhamiltonian}
Using the approach of \cite{lionssouganidis} and \cite{takispaper}, we will now identify the effective Hamiltonian $\overline{H}(p)$ in the stationary ergodic case as a weak limit by utilizing the previously derived oscillation bounds, and we will also show the existence of an ``approximate supercorrector'' that is strictly sublinear at infinity, i.e. a function that satisfies \eqref{supercorrector} for any $\nu > 0$. To do this we rely on the convexity of the exponential Hamiltonian and the lower semicontinuity of convex functionals with respect to weak convergence. In order to show that the approximate supercorrector is strictly sublinear at infinity, we use gradient bounds obtained by taking the mollification of the functions $w^\lambda$ from Section \ref{sec:estimates}. Note that in the almost periodic case, it is possible to find an ``approximate corrector'' for any $\nu > 0$ i.e. a function that satisfies the inequality
\begin{equation} \label{subcorrector}
1 - \int J(y) \exp(-y\cdot p) \exp(w_\nu(z-y, \omega) - w_\nu(z, \omega)) dy - c(z, \omega) \leq \overline{H}(p) + \nu
\end{equation}
in addition to \eqref{supercorrector}. However, the lack of compactness in the stationary ergodic setting prevents us from finding a function that satisfies both \eqref{subcorrector} and \eqref{supercorrector}.

\begin{proposition} \label{firstconvergenceprop}
There exists a set of full probability $\Omega_1 \subset \Omega$ and a continuous function $\overline{H}: \mathbb{R}^n \rightarrow \mathbb{R}$ such that for every $R > 0$, $p \in \mathbb{R}^n$, and $\omega \in \Omega_1$,
\begin{equation} \label{convergenceinEV}
\lim_{\lambda \rightarrow 0} \mathbb{E}\left[ \sup_{z \in B_{\frac{R}{\lambda}}} |\lambda v^\lambda(z, \cdot) + \overline{H}(p)|\right] = 0
\end{equation}
and
\begin{equation} \label{fulllimsup}
-\overline{H}(p) = \limsup_{\lambda \rightarrow 0} \lambda v^\lambda(0, \omega).
\end{equation}
Moreover, for each $p \in \mathbb{R}^d$ and $\nu > 0$, there exists a function $w_\nu: \mathbb{R}^n \times \Omega \rightarrow \mathbb{R}$ such that for every $\omega \in \Omega_1$ and $1 \leq \alpha < \infty$, $w_\nu(\cdot, \omega) \in W^{1, \alpha}_{\mathrm{loc}}(\mathbb{R}^n) \cap C^{0, 1}(\mathbb{R}^n)$, $Dw_\nu$ is stationary, and
\begin{align}
& 1 - \int J(y) \exp(-y\cdot p) \exp(w_\nu(z-y, \omega) - w_\nu(z, \omega)) dy - c(z, \omega) \geq \overline{H}(p) - \nu \text{ in } \mathbb{R}^n, \label{supercorrector} \\
&|z|^{-1} w_\nu(z, \omega) \rightarrow 0 \text{ as } |z| \rightarrow \infty. \label{strictlysublinear}
\end{align}
\end{proposition}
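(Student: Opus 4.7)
The plan is to follow the Armstrong–Souganidis scheme for stationary ergodic homogenization, adapted to the nonlocal exponential Hamiltonian. The three main ingredients are the uniform oscillation estimates from Section~\ref{sec:estimates}, the convexity of the operator $u\mapsto\int J(y)e^{-y\cdot p}e^{u(z-y)-u(z)}\,dy$ in $u$ (immediate from positive semidefiniteness of the Hessian of $(a,b)\mapsto e^{a-b}$), and the ergodic theorem applied to stationary objects derived from $v^\lambda$.

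I would first identify $\overline{H}(p)$. Stationarity of $c$ and the comparison principle for \eqref{approxcellproblemomega} imply that $v^\lambda(\cdot,\omega)$ is stationary; comparison with constant sub- and super-solutions gives a uniform-in-$\lambda,\omega$ bound on $\lambda v^\lambda(0,\omega)$. Extract a subsequence $\lambda_k\to 0$ so that $\mathbb{E}[\lambda_k v^{\lambda_k}(0,\cdot)]$ converges and define $\overline{H}(p)$ as the negative of this limit; ergodicity together with the uniform bound upgrades this to $\lambda_k v^{\lambda_k}(0,\cdot)\to-\overline{H}(p)$ in $L^1(\mathbb{P})$. For the sup-in-$z$ statement \eqref{convergenceinEV}, cover $B_{R/\lambda}$ by $O((R/r)^n)$ balls of radius $r/\lambda$; on each, Theorem~\ref{finalosctheorem} bounds the oscillation of $\lambda v^\lambda$ by $Cr$ uniformly in $\omega$, and by stationarity the expected value at each ball's center equals $\mathbb{E}[\lambda v^\lambda(0,\cdot)]$. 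Summing and letting $\lambda\to 0$ first, then $r\to 0$, yields the required $L^1(\mathbb{P})$ convergence of the supremum. The full-$\limsup$ claim \eqref{fulllimsup} follows from this convergence and a standard ergodicity argument showing that any subsequential a.s.\ limit of $\lambda v^\lambda(0,\omega)$ is deterministic and equals $-\overline{H}(p)$. Continuity of $\overline{H}$ in $p$ follows directly from Lemma~\ref{plipschitzlemma}.

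To construct the approximate supercorrector $w_\nu$, set $w^{\lambda_k}(z,\omega):=v^{\lambda_k}(z,\omega)-v^{\lambda_k}(0,\omega)$ along the subsequence above; by Theorem~\ref{finalosctheorem} and Proposition~\ref{lptheorem} this family is locally uniformly bounded in $L^\infty$ and $L^\alpha$. Mollifying $w^{\lambda_k}$ at a scale $\delta\geq r_1/2$ converts the oscillation estimate \eqref{finaloscbound1} into a uniform Lipschitz bound $\|Dw^{\lambda_k,\delta}\|_{L^\infty}\leq C$, allowing extraction of a weak limit $w_\nu\in W^{1,\alpha}_{\rm loc}\cap C^{0,1}$ via a diagonal argument in $(\lambda_k,\delta)$. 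To verify \eqref{supercorrector}, pass to the limit in \eqref{wlequation}: the right-hand side $-\lambda_k(w^{\lambda_k}(z)+v^{\lambda_k}(0))$ converges to $\overline{H}(p)$ almost surely, while convexity of the nonlocal integrand in $u$, together with pointwise convergence of $e^{-w^{\lambda_k}(z)}$ (via the compact embedding $W^{1,\alpha}_{\rm loc}\hookrightarrow L^\alpha_{\rm loc}$ after mollification), makes the left-hand side upper semicontinuous in the limit, producing the inequality $\geq\overline{H}(p)-\nu$ with the $\nu$ slack absorbing finite-$k$ error. For \eqref{strictlysublinear}, the stationarity of $Dw_\nu$ inherited from $Dv^\lambda$ and the vanishing of its mean (because $\mathbb{E}[v^\lambda(0,\cdot)]$ is constant in $z$) allow Becker's ergodic theorem to be applied along rays, with Fubini handling the direction $z/|z|$.

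The principal difficulty lies in verifying \eqref{supercorrector}: the coupling $e^{w(z-y)-w(z)}$ ties values of $w^\lambda$ at different points, so weak convergence alone is insufficient and must be carefully combined with pointwise convergence at $z$ after mollification. The mollification scale must be coupled to $\lambda_k$ so that $\delta\geq r_1/2$ enables use of the oscillation bound while the limit still satisfies a meaningful version of \eqref{wlequation}. This asymmetry between the roles of $z$ and $z-y$ in the integrand is precisely what prevents the opposite inequality \eqref{subcorrector} from also holding in the stationary ergodic setting, and forces us to a supercorrector rather than a full corrector.
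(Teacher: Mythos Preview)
Your proposal has the right ingredients but the logical order is inverted, and this creates a genuine gap in the proof of \eqref{fulllimsup}. You claim that ``ergodicity together with the uniform bound upgrades'' convergence of $\mathbb{E}[\lambda_k v^{\lambda_k}(0,\cdot)]$ to $L^1(\mathbb{P})$-convergence of $\lambda_k v^{\lambda_k}(0,\cdot)$, and that \eqref{fulllimsup} then follows from ``a standard ergodicity argument showing that any subsequential a.s.\ limit of $\lambda v^\lambda(0,\omega)$ is deterministic and equals $-\overline{H}(p)$.'' Neither step is valid as stated. Convergence of expectations does not imply $L^1$-convergence, even for bounded stationary sequences. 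And ergodicity only tells you that $\limsup_{\lambda\to 0}\lambda v^\lambda(0,\omega)$ is almost surely constant; it does not identify that constant with the particular subsequential limit $-\overline{H}(p)$. A different subsequence could, a priori, give a different deterministic value.

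The paper resolves this by reversing your order of operations: it first defines $\overline{H}(p)$ only as a \emph{weak} $L^\alpha(\Omega)$ limit along a subsequence, then constructs the supercorrector $w_\nu$, and then \emph{uses} $w_\nu$ in a comparison argument to prove the hard direction $\limsup_{\lambda\to 0}\lambda v^\lambda(0,\omega)\leq -\overline{H}(p)$ for the full sequence. Concretely, one builds $\hat{w}^\lambda(z):=(1-\epsilon)\bigl(w_\nu(z)-(\overline{H}-\eta)\lambda^{-1}\bigr)+\epsilon(1+|z|^2)^{1/2}$, checks via convexity and the supercorrector inequality \eqref{supercorrector} that $\hat{w}^\lambda$ is a supersolution of \eqref{approxcellproblemomega} on a large ball $B_R$ with $R\sim(\lambda\eta)^{-1}$, verifies $\hat{w}^\lambda\geq v^\lambda$ outside $B_R$ using strict sublinearity of $w_\nu$, and applies comparison. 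Evaluating at the origin and sending $\lambda\to 0$, $\epsilon\to 0$, $\eta\to 0$ gives $\limsup\lambda v^\lambda(0,\omega)\leq-\overline{H}$. Only then does one invoke a measure-theoretic lemma (bounded sequence whose weak limit coincides with its a.s.\ $\limsup$ converges in $L^1$) to get $L^1$-convergence of the full sequence, after which your covering argument for \eqref{convergenceinEV} goes through. Your construction of $w_\nu$ is essentially correct, though there is no need to couple the mollification scale to $\lambda_k$: one fixes $\theta$ small enough that the $o_\theta(1)$ error from mollification is below $\nu$, and takes $w_\nu=w_\theta$.
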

\begin{proof}
First we note that the continuity of $\overline{H}$ follows from (\ref{convergenceinEV}) and Lemma \ref{plipschitzlemma}, and that it suffices to prove the theorem for a fixed $p \in \mathbb{Q}^n$. Therefore in this proof we fix $p$ and omit dependence on it, and we will take $\Omega_1$ to be the intersection of the full probability sets that we construct for each rational $p$.

Because $c(z, \omega)$ is a bounded function, by comparison we have that $\lambda v^\lambda(0, \omega)$ is bounded in $L^\infty(\Omega)$. Proposition \ref{lptheorem} and Theorem \ref{finalosctheorem} state that $w^\lambda(z, \omega) := v^\lambda(z, \omega) - v^\lambda(0, \omega)$ is $L^\alpha_{loc}$ uniformly bounded in $\omega$ and $\lambda$, and that $w^\lambda$ and $v^\lambda$ satisfy (\ref{finaloscbound}). Now define
$$
w^\lambda_{\theta}(z, \omega) = (w^\lambda(\cdot, \omega) \ast \rho_\theta)(z),
$$
where $\ast$ denotes convolution and $\rho_\theta$ is a standard mollifier.  The local $L^\alpha$ bounds proven for $w^\lambda$ hold also for $w^\lambda_{\theta}$. Proposition \ref{finalosctheorem} in conjunction with the properties of mollification tell us that $\|Dw^\lambda_\theta\|_{\infty} \leq C(\theta)$ and $\|Dw^\lambda_\theta(\cdot, \omega)\|_{\alpha, loc} \leq C(\theta)$.

Therefore, we can find a subsequence $\lambda$, and for each $\theta$, we can find a subsequence $\lambda_j$ (taken to be a subsequence of $\lambda$), a random variable $\overline{H} = \overline{H}(p, \omega)$, a function $w_\theta \in L^\alpha_{loc}(\mathbb{R}^n \times \Omega)$ and a field $\Phi_\theta \in L^\alpha_{loc}(\mathbb{R}^n \times \Omega; \mathbb{R}^n)$ such that for every $R > 0$, as $\lambda \rightarrow 0$ and $j \rightarrow \infty$,
\begin{equation} \label{weakconvergence}
\left\{\begin{array}{ll}
-\lambda v^{\lambda}(0, \cdot) \rightharpoonup \overline{H}(p, \cdot) & \text{weakly in } L^\alpha(\Omega), \\
w^{\lambda_j}_\theta \rightharpoonup w_\theta &\text{weakly in } L^\alpha(B_R \times \Omega), \\
Dv^{\lambda_j}_\theta \rightharpoonup \Phi_\theta &\text{weakly in } L^\alpha(B_R \times \Omega; \mathbb{R}^n).
\end{array} \right.
\end{equation}
We can quickly show that $\overline{H}(p)$ is independent of $\omega$, i.e. $\overline{H}(p, \omega) = \overline{H}(p)$ for $\omega$ contained in a full probability set $\hat{\Omega}$. Due to the ergodicity hypothesis, it suffices to verify that for each $\mu \in \mathbb{R}$, the event $\{\omega \in \Omega: \overline{H}(p, \omega) \geq \mu\}$ is invariant under $\tau_y$, and this is true because $v^\lambda$ is stationary and (\ref{finaloscbound}).

Since $w^\lambda$ is continuous and satisfies (\ref{wlequation}), $w^\lambda_{\theta}$ converges to $w^\lambda$ locally uniformly as $\theta \rightarrow 0$, and $J$ has compact support, dominated convergence implies that as $\theta \rightarrow 0$,
\begin{equation}
\lambda (w^\lambda_{\theta}(z, \omega) + v^\lambda(0, \omega)) + \bar{J} - \int J(y)\exp(-y\cdot p) \exp(w^\lambda_{\theta}(z-y, \omega) - w^\lambda_{\theta}(z, \omega)) dy - c(z, \omega) = o_\theta(1). \label{theta}
\end{equation}
We also know that convex functionals are lower semicontinuous with respect to weak convergence, which follows from the fact that linear functionals are continuous with respect to weak convergence and that a convex functional is a supremum of linear functionals. Therefore, passing to the weak limit $\lambda_j \rightarrow 0$ in \eqref{theta} with $\theta$ fixed and applying (\ref{weakconvergence}) yields
\begin{equation}
\bar{J} - \int J(y) \exp(-y\cdot p) \exp(w_\theta(z-y, \omega) - w_\theta(z, \omega)) dy - c(z, \omega) \geq \overline{H}(p) - o_\theta(1) \label{wthetaequation}
\end{equation}
for $\omega \in \hat{\Omega}$. If we take a sufficiently small $\theta$ so that $o_\theta(1) < \nu$, $w_\theta$ is a function that satisfies (\ref{supercorrector}).

We now need to show \eqref{strictlysublinear}, i.e. that $w_\theta$ is strictly sublinear at infinity. First we note that $\Phi_\theta$ is stationary.
This is true because $v^\lambda$ is stationary, which means $v^\lambda_\theta := v^\lambda \ast \rho_\theta$ is also stationary. Therefore, $Dw^\lambda_\theta = Dv^\lambda_\theta$ is stationary, and so $\Phi_\theta$ is stationary as well. We can check (see \cite{kozlov}) that $\Phi_\theta = Dw_\theta$ almost surely in $\omega$, and that
$$
\mathbb{E}[\Phi_\theta(0, \cdot)] = \lim_{j \rightarrow \infty} \mathbb{E}[Dv^{\lambda_j}_\theta(0, \cdot)] = 0,
$$
which is true because $v^{\lambda_j}$ is stationary, and we have a local $L^\alpha$ bound on $Dw_\theta(\cdot, \omega)$. Thus because $w_\theta$ is a process with a stationary, mean zero gradient, it is strictly sublinear at infinity almost surely in $\omega$ (see \cite{kozlov}, \cite{takispaper}).

Our objective is now to show (\ref{fulllimsup}). First we show that
\begin{equation} \label{limsup}
-\overline{H} \geq \limsup_{\lambda \rightarrow 0} \lambda v^\lambda(0, \omega) \text{ a.s. in } \omega.
\end{equation}
To do this, we take a small $\nu$ to be fixed later and consider the approximate supercorrector $w_\nu$ satisfying (\ref{supercorrector}). Fix $\omega \in \tilde{\Omega} \subset \hat{\Omega}$, $\tilde{\Omega}$ a set of full probability where (\ref{strictlysublinear}) holds. Consider the function $\varphi(z) := \left(1+|z|^2\right)^{\frac{1}{2}}.$
%We can easily check that $\varphi(z)$ is uniformly Lipschitz continuous in $\mathbb{R}^n$, which implies that $\osc_{B(z, \overline{r})} \varphi(\cdot) \leq C \overline{r}$ for a constant $C$ that is uniform in $z$. Therefore, there exists another constant $C$ such that that for $z \in \mathbb{R}^n$
%$$
%\bar{J}-\int J(y) \exp(-y\cdot p)\exp(\varphi(z-y) - \varphi(y)) dy - c(z) \geq -C.
%$$
For each $\lambda > 0$, define the function $\hat{w}^\lambda(z):= (1-\epsilon)(w_\nu(z, \omega) - (\overline{H} - \eta)\lambda^{-1}) + \epsilon \varphi(z),$ where $\eta, \epsilon > 0$ are small constants to be chosen. We would like to apply comparison to $v^\lambda$ and $\hat{w}^\lambda$. We use the convexity of the exponential and use the fact that $\varphi$ is uniformly Lipschitz continuous to conclude that
\begin{equation}
\label{whatlambda}
\lambda \hat{w}^\lambda + \bar{J} - \int J(y) \exp(-y\cdot p)\exp(\hat{w}^\lambda(z-y) - \hat{w}^\lambda(z))dy - c(z) \geq \lambda \hat{w}^\lambda + (1-\epsilon) \overline{H} - C\epsilon - \nu
\end{equation}
(\ref{strictlysublinear}) implies that for $R$ sufficiently large,
\begin{equation} \label{strictlysublinear2}
\inf_{B_R} w_\nu \geq C_\eta - \eta^3R
\end{equation}
We now choose $\epsilon = \min(\frac{1}{4}, \frac{\eta}{4C})$, with the same $C$ as in (\ref{whatlambda}), and so we can estimate the right hand side of (\ref{whatlambda}) in $B_R$ using (\ref{strictlysublinear2}) as follows:
$$
\lambda \hat{w}^\lambda + (1-\epsilon)\overline{H} + C\epsilon + o(\theta) = (1-\epsilon)(\lambda w_\nu - \eta) - C\epsilon - \nu \geq \lambda C_\eta - \lambda \eta^3 R + \frac{\eta}{2} - \nu.
$$
Now if we fix $\nu$ so that $\nu < \frac{\eta}{4}$, then for $\lambda$ sufficiently small this expression is nonnegative. Now we would like to verify that $\hat{w}^\lambda \geq v^\lambda$ on $\mathbb{R}^n \backslash B_R$. Because we have a uniform upper bound on $\lambda v^\lambda$, on $\mathbb{R}^n \backslash B_R$ we see that for some constant $C$, $\hat{w}^\lambda - v^\lambda \geq (1-\epsilon) w_\nu - C\lambda^{-1} + C\eta R.$ Therefore, because $w_\nu$ is strictly sublinear at infinity we can select $R:= C_1(\lambda\eta)^{-1}$ for a large constant $C_1$, and take $\lambda$ sufficiently small to conclude that $\hat{w}^\lambda \geq v^\lambda$ on $\mathbb{R}^n \backslash B_R$. We have obtained
%We can choose $\epsilon$ sufficiently small depending on $\eta, C$, and take $\nu, \lambda$ sufficiently small in (\ref{whatlambda}), and then take $R$ sufficiently large depending on $\lambda$ and $\eta$, we get upon applying (\ref{strictlysublinear2}),
\begin{equation*}
\left\{
\begin{array}{ll}
\displaystyle \lambda \hat{w}^\lambda + \bar{J} - \int J(y) \exp(-y\cdot p)\exp(\hat{w}^\lambda(z-y) - \hat{w}^\lambda(z))dy - c(z) \geq 0 & \text{in } B_R,\\
\hat{w}^\lambda \geq v^\lambda & \text{on }  \mathbb{R}^n \backslash B_R.\\
\end{array} \right.
\end{equation*}
By comparison (see \cite{nonlocaldirichlet}) we have that $\hat{w}^\lambda(\cdot) \geq v^\lambda(\cdot, \omega)$ in $B_R$ for $\lambda$ sufficiently small, and in particular we have that $\hat{w}^\lambda(0) \geq v^\lambda(0, \omega)$. Multiplying this inequality by $\lambda$, and taking $\lambda \rightarrow 0$, we have
%$$
%-\overline{H}+\eta \geq (1-C\eta)^{-1} \limsup_{\lambda \rightarrow 0} \lambda v^\lambda(0, \omega),
%$$
$$
-\overline{H}+\eta \geq (1-\epsilon)^{-1} \limsup_{\lambda \rightarrow 0} \lambda v^\lambda(0, \omega),
$$
and because $\epsilon, \eta$ are arbitrarily small, and $\omega$ was an arbitrary element of $\tilde{\Omega}$, we have (\ref{limsup}). Since $-\overline{H}$ is the weak limit of the sequence $\lambda_j v^{\lambda_j}(0, \cdot)$, we have the reverse inequality of (\ref{limsup}) easily.
%NOTE TO SELF: If not there exists positive measure set, take indicator function of that set and plug it into the weak convergence definition.
Therefore, we have shown (\ref{fulllimsup}).

It remains to prove (\ref{convergenceinEV}), and we first note that the full sequence $\lambda v^\lambda(0, \cdot)$ converges weakly to $-\overline{H}$ in $L^\alpha$, in particular in $L^1$.
%It suffices to show that any weakly convergent subsequence of $\lambda v^\lambda(0, \cdot)$ must converge to $-\overline{H}$. Suppose that for a subsequence $\lambda_i$, $\lambda_i v^{\lambda_i}(0, \cdot)$ converges weakly to $H_2(p, \omega)$. Then $H_2$ must be deterministic by the same reasoning as before, (\ref{fulllimsup}) gives us that $H_2 \leq -\overline{H}$, and going through the comparison argument again with this subsequence $\lambda_i$ gives us that $H_2 \geq -\overline{H}$. Therefore, $H_2 = -\overline{H}$, and then we can say that
%$$
%\lambda v^\lambda(0, \cdot) \rightharpoonup -\overline{H} = \limsup_{\lambda \rightarrow 0} \lambda v^\lambda(0, \cdot) \text{ weakly in } L^\alpha(\Omega).
%$$
Using a measure-theoretic lemma (see \cite{takispaper}, \cite{lionssouganidis}), we have
\begin{equation} \label{blah3}
\mathbb{E}[|\lambda v^\lambda(0, \cdot) + \overline{H}|]\rightarrow 0 \text{ as } \lambda \rightarrow 0.
\end{equation}
Fix $R > 0$, let $\gamma > 0$ be a small constant, and use the Vitali covering lemma to select points $z_1, \ldots, z_k \in B_R$ such that
$$
B_R \subset \bigcup_{i=1}^k B(z_i, \gamma) \text{ and } k \leq C\left(\frac{R}{\gamma}\right)^d.
$$
Now using (\ref{blah3}), (\ref{finaloscbound}), the fact that $v^\lambda$ is stationary, and the fact that expectation is preserved under measure-preserving transformations, we have
\begin{align*}
\limsup_{\lambda \rightarrow 0}\quad &\mathbb{E}\left[\sup_{z \in B_{\frac{R}{\lambda}}} |\lambda v^\lambda(z, \omega) + \overline{H}|\right] \\
&\leq \max_{1 \leq i \leq k}\left[\limsup_{\lambda \rightarrow 0} \mathbb{E}\left[|\lambda v^\lambda\left(\frac{z_i}{\lambda}, \omega\right) + \overline{H}|\right]\right] + \limsup_{\lambda \rightarrow 0} \mathbb{E}\left[\max_{1 \leq i \leq k} \osc_{z \in B(\frac{z_i}{\lambda}, \frac{\gamma}{\lambda})} \lambda v^\lambda(z, \omega)\right] \\
&\leq \max_{1 \leq i \leq k} \left[ \limsup_{\lambda \rightarrow 0} \mathbb{E}\left[|\lambda v^\lambda(0, \omega) + \overline{H}|\right]\right] + C\gamma  = C \gamma,
\end{align*}
%\begin{align*}
%\limsup_{\lambda \rightarrow 0}\quad &\mathbb{E}\left[\sup_{z \in B_{\frac{R}{\lambda}}} |\lambda v^\lambda(z, \omega) + \overline{H}|\right] \\
%&\leq \max_{1 \leq i \leq k}\left[\limsup_{\lambda \rightarrow 0} \mathbb{E}\left[|\lambda v^\lambda\left(\frac{z_i}{\lambda}, \omega\right) + \overline{H}|\right]\right] + \limsup_{\lambda \rightarrow 0} \mathbb{E}\left[\max_{1 \leq i \leq k} \osc_{z \in B(\frac{z_i}{\lambda}, \frac{\gamma}{\lambda})} \lambda v^\lambda(z, \omega)\right] \\
%& \leq \max_{1 \leq i \leq k}\left[\limsup_{\lambda \rightarrow 0} \mathbb{E}\left[|\lambda v^\lambda\left(\frac{z_i}{\lambda}, \tau_{\frac{-z_i}{\lambda}}\omega\right) + \overline{H}|\right]\right] + C \gamma \\
%&\leq \max_{1 \leq i \leq k} \left[ \limsup_{\lambda \rightarrow 0} \mathbb{E}\left[|\lambda v^\lambda(0, \omega) + \overline{H}|\right]\right] + C\gamma \\
%&=C\gamma,
%\end{align*}
Because $\gamma$ is an arbitrary small constant, we get (\ref{convergenceinEV}).
\end{proof}
We next show that $\overline{H}(p)$ is concave, negatively coercive, and continuous in $p$ a.s. in $\omega$. Because the proof is the same as the proof of Proposition 4.6 in \cite{almostperiodicpaper}, we omit it.
\begin{proposition} \label{Hbarproperties}
The effective Hamiltonian $\overline{H}$ satisfies the following properties a.s. in $\omega$:
\begin{compactenum}
\item $p \mapsto \overline{H}(p)$ is concave.
\item There exist constants $K_1, K_2, K_3, K_4> 0, C_1, C_2$ such that $\overline{H}(p)$ satisfies
\begin{equation} \label{hbarcoercive}
-K_1 \exp(K_2|p|) - C_1 \geq \overline{H}(p) \geq -K_3 \exp(K_4|p|) - C_2
\end{equation}
for all $p \in \mathbb{R}^n$. In particular, this implies that $\overline{H}$ is uniformly and negatively coercive.
\item There exist constants $C_3, C_4$ such that for all $p_1, p_2 \in \mathbb{R}^n$,
\begin{equation} \label{hbarloclipschitz}
|\overline{H}(p_1) - \overline{H}(p_2)| \leq C_3\exp(C_4(1+|p_1|+|p_2|))|p_1 - p_2|.
\end{equation}
\end{compactenum}
\end{proposition}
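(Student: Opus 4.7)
The plan is to establish all three properties at the level of the approximate cell problem \eqref{approxcellproblemomega} and then pass to the limit $\lambda\to 0$ via \eqref{fulllimsup} from Proposition \ref{firstconvergenceprop}. For concavity, fix $p_1,p_2\in\mathbb{R}^n$, $t\in[0,1]$, write $p=tp_1+(1-t)p_2$, and set $v(z):=tv^\lambda(z;p_1)+(1-t)v^\lambda(z;p_2)$. The crucial algebraic identity is
\[
e^{-y\cdot p}e^{v(z-y)-v(z)} = \bigl[e^{-y\cdot p_1}e^{v^\lambda(z-y;p_1)-v^\lambda(z;p_1)}\bigr]^{t}\bigl[e^{-y\cdot p_2}e^{v^\lambda(z-y;p_2)-v^\lambda(z;p_2)}\bigr]^{1-t},
\]
so Young's inequality $a^tb^{1-t}\leq ta+(1-t)b$, combined with $J\geq 0$, shows after integration that the nonlocal term for $v$ is bounded above by the convex combination of the nonlocal terms for the $v^\lambda(\cdot;p_i)$. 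Substituting into \eqref{approxcellproblemomega} exhibits $v$ as a supersolution at parameter $p$, so the comparison principle gives $v\geq v^\lambda(\cdot;p)$; evaluating at $z=0$, multiplying by $\lambda$, and taking $\limsup$ via \eqref{fulllimsup} then yields $\overline{H}(p)\geq t\overline{H}(p_1)+(1-t)\overline{H}(p_2)$.

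For the coercivity bounds \eqref{hbarcoercive}, I would compare $v^\lambda$ with constant sub- and supersolutions. Testing a constant $M$ in \eqref{approxcellproblemomega} reduces the left-hand side to $\lambda M + \bar{J} - \int J(y)e^{-y\cdot p}dy - c(z)$, so the choices $M=\lambda^{-1}[\sup_{z} c - \bar{J} + \int J(y)e^{-y\cdot p}dy]$ and $M=\lambda^{-1}[\inf_{z} c - \bar{J} + \int J(y)e^{-y\cdot p}dy]$ produce a super- and subsolution respectively, independent of $\omega$. Comparison and \eqref{fulllimsup} then give
\[
\bar{J}-\int J(y)e^{-y\cdot p}dy - \textstyle\sup_z c \;\leq\; \overline{H}(p) \;\leq\; \bar{J}-\int J(y)e^{-y\cdot p}dy - \inf_z c.
\]
The support hypothesis on $J$ gives $\int J(y)e^{-y\cdot p}dy\leq \bar{J}e^{\bar{r}|p|}$, while the symmetry of $J$ yields $\int J(y)e^{-y\cdot p}dy = \int J(y)\cosh(y\cdot p)dy$, and combining this with $J\geq A$ on $B(0,r_1)$ and $\cosh(t)\geq\tfrac12 e^{|t|}$, restricted to a spherical cap where $y\cdot(p/|p|)\geq r_1/2$, produces a matching lower bound of the form $K_1 e^{K_2|p|}-C$. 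Plugging these two bounds into the sandwich above yields \eqref{hbarcoercive}.

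The local Lipschitz estimate \eqref{hbarloclipschitz} is essentially immediate from Lemma \ref{plipschitzlemma}: evaluating \eqref{plipschitz} at $y=0$, taking $\limsup$ as $\lambda\to 0$, and applying \eqref{fulllimsup} together with $|\limsup A_\lambda-\limsup B_\lambda|\leq\limsup|A_\lambda-B_\lambda|$ gives the bound. The one genuine subtlety, which I expect to be the main technical nuisance, is that \eqref{fulllimsup} delivers a $\limsup$ rather than a bona fide limit, which matters in the concavity and Lipschitz steps where two parameters $p_1,p_2$ appear simultaneously. I would resolve this either by extracting, along a countable dense set of $p$'s, a single subsequence on which $\lambda v^\lambda(0;p)$ converges for all such $p$ simultaneously, or by invoking the $L^1(\Omega)$ convergence \eqref{blah3} established inside the proof of Proposition \ref{firstconvergenceprop}, which upgrades \eqref{fulllimsup} to a genuine limit along subsequences and is enough to push the relevant inequalities through.
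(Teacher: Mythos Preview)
Your proposal is correct and follows the standard argument that the paper defers to (it omits the proof, citing Proposition~4.6 of \cite{almostperiodicpaper}); the three ingredients you use---convexity of the exponential plus comparison for concavity, constant barriers for the coercivity sandwich, and Lemma~\ref{plipschitzlemma} for the modulus of continuity---are exactly the expected ones. One small remark: your worry about the $\limsup$ in \eqref{fulllimsup} is a bit overcautious, since in the concavity step the subadditivity $\limsup(A+B)\le\limsup A+\limsup B$ already points in the right direction, and in the Lipschitz step the elementary inequality $|\limsup A_\lambda-\limsup B_\lambda|\le\limsup|A_\lambda-B_\lambda|$ that you quote is enough, so no subsequence extraction or appeal to the $L^1$ convergence is actually needed.
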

\begin{remark} \label{convsubsequence}
(\ref{convergenceinEV}) implies that for a fixed $p$ there exists a subsequence $\lambda_j \rightarrow 0$, a full probability set $\Omega_2 \subset \Omega_1$ such that for every $R > 0$,
\begin{equation} \label{convsubsequence1}
\lim_{j \rightarrow \infty} \sup_{z \in B_{\frac{R}{\lambda_j}}} |\lambda_j v^{\lambda_j}(z, \omega; p) + \overline{H}(p)| = 0.
\end{equation}
Via a diagonalization procedure we can obtain a subsequence such that (\ref{convsubsequence1}) holds for every $R > 0$, rational $p$, and $\omega \in \Omega_2$, and then use Lemma \ref{plipschitzlemma} to conclude that (\ref{convsubsequence1}) holds for all $p \in \mathbb{R}^n, \omega \in \Omega_2$.
\end{remark}
\begin{remark} \label{liminfremark}
Proposition \ref{firstconvergenceprop} has given us a partial homogenization result, in the direction of \eqref{asconvergence}, which represents the ultimate goal of proving almost sure homogenization. Given (\ref{fulllimsup}), to finish the proof of \eqref{asconvergence} it remains to show that $\displaystyle \liminf_{\lambda \rightarrow 0} \lambda v^\lambda(0, \omega; p) = -\overline{H}(p).$ The methods of Section \ref{sec:metricproblem} are required to prove this. However, we note that the quantity
\begin{equation} \label{hhatfullprob}
-\hat{H}(p, \omega) := \displaystyle \liminf_{\lambda \rightarrow 0} \lambda v^\lambda(0, \omega; p)
\end{equation}
is deterministic almost surely in $\omega$ because the set
%\eqref{finaloscbound} and the stationarity of the $v^\lambda$'s implies that for each $\mu \in \mathbb{R}$, the set
$
\{\omega: -\hat{H}(p, \omega) \geq \mu\}
$
is invariant under $\tau_y$ for every $y \in \mathbb{R}^d$.
%Therefore, the ergodic hypothesis implies that each set of \eqref{hhatfullprob} has probability 0 or 1. Now if we denote $-\hat{H} = -\hat{H}(p)$ to be the infimum over $\mu$ for the set in \eqref{hhatfullprob} has full probability, noting that there exists such a $\mu$ due to the uniform boundedness of $\lambda v^\lambda$, then we obtain a full probability set $\Omega_3 \subset \Omega_2$ such that
%\begin{equation} \label{liminfHhat}
%\liminf_{\lambda \rightarrow 0} \lambda v^\lambda(0, \omega; p) = -\hat{H}(p).
%\end{equation}
\end{remark}
\section{Metric Problem}
\label{sec:metricproblem}
\subsection{Well-Posedness of the Metric Problem}
We define the metric problem to be the equation
\begin{equation} \label{metricequation}
\bar{J}-\int J(y) \exp(-y\cdot p) \exp(m_\mu(z-y, \omega) - m_\mu(z, \omega))dy - c(z, \omega) = \mu \text{ on } \mathbb{R}^n \backslash D
\end{equation}
where $D$ is a bounded closed set and $\mu \in \mathbb{R}$. Subject to appropriate boundary conditions for solutions on $\mathrm{int}(D)$ and at infinity, a solution to (\ref{metricequation}) is related to the ``metric'' associated with the effective Hamiltonian. This construction is the direct analogue of the metric problem from \cite{takispaper}. Our objective in this subsection is to show that the metric problem is well posed for each $\mu < \overline{H}(p)$, and the ``approximate supercorrectors'' from Section \ref{sec:effhamiltonian} serve as a crucial part of the proof. In this entire section let $\nu$ be sufficiently small so that $\overline{H}(p) - \nu > \mu$, and let $w_\nu$ satisfy (\ref{supercorrector}) with this value of $\nu$. We first prove a comparison principle for (\ref{metricequation}).

\begin{proposition} \label{comparison}
Fix $p \in \mathbb{R}^n$, $\mu < \overline{H}(p)$, and $\omega \in \Omega_3$. Let $D \subset \mathbb{R}^d$ be closed and bounded. Suppose that $u \in \mathrm{USC}(\mathbb{R}^n), v \in \mathrm{LSC}(\mathbb{R}^n)$ are respectively a subsolution and supersolution of \eqref{metricequation} and satisfy $u \leq v$ on $\mathrm{int}(D)$, and we have the following conditions at infinity:
\begin{equation} \label{metricprobleminftycondition}
\limsup_{|z| \rightarrow \infty} \frac{u(z)}{|z|} \leq 0, \liminf_{|z| \rightarrow \infty} \frac{u(z)}{|z|} > -\infty, \liminf_{|z|\rightarrow \infty} \frac{v(z)}{|z|} > -\infty.
\end{equation}
Then $u \leq v$ on $\mathbb{R}^n\backslash D$.
\end{proposition}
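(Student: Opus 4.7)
The plan is a proof by contradiction: assume $M := \sup_{\mathbb{R}^n \setminus D}(u - v) > 0$ and build a \emph{strict} supersolution from $v$ and the approximate supercorrector $w_\nu$ furnished by Proposition \ref{firstconvergenceprop}. Since $\mu < \overline{H}(p)$, fix $\nu \in (0, \overline{H}(p) - \mu)$ and set $\eta_0 := \overline{H}(p) - \nu - \mu > 0$. The key structural observation is that the nonlocal operator
$$F[m](z) := \bar{J} - \int J(y) e^{-y \cdot p} e^{m(z-y) - m(z)}\,dy - c(z)$$
is \emph{concave} in $m$, because $x \mapsto e^x$ is convex. Therefore, for any $\delta \in (0, 1)$, the convex combination $v_\delta(z) := (1 - \delta) v(z) + \delta w_\nu(z) + C_\delta$, with $C_\delta := \delta \cdot \max_{\overline{D}}(u - w_\nu)$ chosen so that $v_\delta \geq u$ on $D$, satisfies $F[v_\delta] \geq (1 - \delta) F[v] + \delta F[w_\nu] \geq \mu + \delta \eta_0$ on $\mathbb{R}^n \setminus D$.

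To force the supremum of $u - v_\delta$ to be attained at a finite point, introduce a Lipschitz penalty: let $\Phi(z) := u(z) - v_\delta(z) - \gamma\psi(z)$ with $\psi(z) := \sqrt{1 + |z|^2}$, so $|\psi(z-y) - \psi(z)| \leq \bar{r}$ uniformly on $\mathrm{supp}(J)$. The growth conditions \eqref{metricprobleminftycondition} together with the strict sublinearity \eqref{strictlysublinear} of $w_\nu$ allow us to pick $\gamma > 0$ large enough that $\Phi(z) \to -\infty$ as $|z| \to \infty$; since $\Phi$ is USC, it attains its supremum at some $z^*$. Choosing $z_0 \in \mathbb{R}^n \setminus D$ with $(u - v)(z_0) \geq M/2$ and then taking $\delta$ and $\gamma \psi(z_0)$ small enough, we arrange $\Phi(z_0) > 0$, forcing $z^* \in \mathbb{R}^n \setminus D$ (since $v_\delta \geq u$ on $D$). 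At $z^*$, the maximality of $\Phi$ gives $u(z^* - y) - u(z^*) \leq v_\delta(z^* - y) - v_\delta(z^*) + \gamma(\psi(z^* - y) - \psi(z^*))$ for every $y \in \mathrm{supp}(J)$. Exponentiating, weighting by $J(y) e^{-y \cdot p}$, and integrating yields
$$F[u](z^*) \geq F[v_\delta](z^*) - A(z^*)(e^{\gamma \bar{r}} - 1),$$
where $A(z^*) := \bar{J} - F[v_\delta](z^*) - c(z^*)$ is uniformly bounded because $F[v_\delta] \geq \mu + \delta\eta_0$ and $c$ is bounded. Calibrating $\gamma$ small enough that $\|A\|_\infty (e^{\gamma \bar{r}} - 1) < \delta \eta_0$ produces $F[u](z^*) > \mu$, contradicting the viscosity subsolution inequality $F[u] \leq \mu$.

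The main obstacle will be the simultaneous calibration of $\delta$ and $\gamma$, which is delicate in the nonlocal setting: $\gamma$ must dominate the linear decay rate of $v$ coming from $\liminf_{|z|\to\infty} v(z)/|z| > -\infty$ so that $\Phi$ tends to $-\infty$, yet $\gamma$ must be small relative to the strict gap $\delta \eta_0$ so that the multiplicative penalty cost $e^{\gamma \bar{r}} - 1$ is absorbed. In the local setting an arbitrarily small smooth perturbation would suffice; here one must first localize the positive supremum of $u - v$ to a bounded region (where $\psi$ is uniformly bounded and the penalty is benign), then exploit the finite range $\bar{r}$ of $J$ and the nonlocal Dirichlet comparison framework cited in the excerpt to carry out the absorption. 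The jump discontinuity of $u$ allowed at $\partial D$ is not an obstruction, since the hypothesis only requires $u \leq v$ on $\mathrm{int}(D)$, a property transferred to $v_\delta$ through the choice of $C_\delta$.
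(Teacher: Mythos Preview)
Your strict supersolution $v_\delta=(1-\delta)v+\delta w_\nu+C_\delta$ and the penalization by $\gamma\psi$ are both natural moves, and the pointwise inequality you extract at the maximizer $z^*$ is correct. The gap is exactly where you flag it: the parameters $\delta$ and $\gamma$ cannot be calibrated simultaneously. From \eqref{metricprobleminftycondition} you only know $\liminf_{|z|\to\infty}v(z)/|z|\ge -L$ for some finite $L>0$; since $w_\nu$ is merely sublinear, this forces
\[
\limsup_{|z|\to\infty}\frac{u(z)-v_\delta(z)}{|z|}\le (1-\delta)L,
\]
so to guarantee $\Phi\to-\infty$ you need $\gamma>(1-\delta)L$. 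On the other hand, the absorption step requires $K(e^{\gamma\bar r}-1)<\delta\eta_0$, i.e.\ $\gamma\lesssim \delta\eta_0/(K\bar r)$. For small $\delta$ these two constraints are incompatible, and for $\delta$ near $1$ you are essentially comparing $u$ with $w_\nu$, which gives you no information about $u-v$ and in particular no way to make $\Phi(z_0)>0$. Your suggested fix --- ``first localize the positive supremum of $u-v$ to a bounded region'' --- presupposes the conclusion: nothing in the hypotheses prevents the near--suprema of $u-v$ from drifting to infinity, and it is precisely this possibility that defeats the direct penalization.

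The paper circumvents this by a bootstrap in an auxiliary interpolation parameter. After reducing to $\liminf_{|z|\to\infty}v(z)/|z|\le 0$, it sets $\Lambda=\{\lambda\in[0,1]:\liminf_{|z|\to\infty}(\lambda v-u)/|z|\ge 0\}$ and shows $\sup\Lambda=1$. For $\lambda<\sup\Lambda$ one compares $\hat v:=\lambda v+(1-\lambda)w_\nu$ with $u_R:=(1-\eta)u+\eta\theta\varphi_R$, where $\varphi_R(z)=R-(R^2+|z|^2)^{1/2}$; the point is that the \emph{definition} of $\Lambda$ supplies the needed growth control $\liminf(\hat v-u_R)/|z|>0$ without any constraint linking the strictness gap to the penalization strength. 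A single application of the strict--comparison lemma then gives $\lambda/(1-\eta)\in\Lambda$ for a fixed $\eta>0$, and iterating pushes $\sup\Lambda$ up to $1$. Your argument becomes a valid proof of the auxiliary strict--comparison lemma (the paper's Lemma~\ref{comparisonlemma}), but the passage from that lemma to Proposition~\ref{comparison} requires this additional continuity method.
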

Note that it is sufficient to assume that $u \leq v$ in the interior of $D$, rather than on $D$; this is a departure from the typical assumption for comparison in a nonlocal Dirichlet problem, which requires the assumption that $u \leq v$ on the complement of the domain, which in this case is $D$. This relaxed assumption is crucial in the subsequent Perron construction to ensure well-posedness.
\begin{proof}
We will omit dependence on $\omega$ and assume that $p = 0$ for simplicity. Adjust $w_\nu$ by a constant so that $w_\nu \geq u$ on $D$. We claim that it suffices to consider $v$ that satisfies
\begin{equation} \label{newinfinitycondition}
\liminf_{|z| \rightarrow \infty} \frac{v(z)}{|z|} \leq 0.
\end{equation}
Suppose that instead the opposite were true. That is,
\begin{equation} \label{oppositeinfinitycondition}
\liminf_{|z| \rightarrow \infty} \frac{v(z)}{|z|} > 0.
\end{equation}
In this case, we define $\tilde{v} := (1-\epsilon)v + \epsilon w_\nu.$ Convexity implies that $\tilde{v}$ is a strict supersolution of (\ref{metricequation}); that is,
\begin{equation} \label{strictsupersolution}
\bar{J}-\int J(y) \exp(-y\cdot p) \exp(\tilde{v}(z-y, \omega) - \tilde{v}(z, \omega))dy - c(z, \omega) \geq \mu + \epsilon_2
\end{equation}
for some $\epsilon_2 > 0$. Because $w_\nu \geq u$ on $D$, $\tilde{v} \geq u$ on $\mathrm{int}(D)$. We now prove a comparison result for (\ref{metricequation}) between strict supersolutions and subsolutions with the assumption that the subsolution lies below the supersolution on the interior of $D$. For use later, we will also state this comparison result in the case that (\ref{metricequation}) is considered in a bounded domain, but because the kernel $J$ is compactly supported, in that case we only need to assume $u \leq v$ on a compact subset of the unbounded set $O^c$.
\begin{lemma} \label{comparisonlemma}
Suppose that in an open set $O \subset \mathbb{R}^n$, $v$ satisfies \eqref{strictsupersolution} for some $\epsilon_2 > 0$ i.e. is a strict supersolution of \eqref{metricequation}, and $u$ is a subsolution of \eqref{metricequation}. We also assume that $O$ is either bounded or has bounded complement. If $O = \mathbb{R}^n \backslash D$ for a bounded, closed $D$, assume that $u \leq v$ on $\mathrm{int}(D)$, \eqref{metricprobleminftycondition}, and \eqref{oppositeinfinitycondition}. If $O$ is bounded, assume that $u \leq v$ on
$$
E := \{z + z_1: z \in O, z_1 \in B(0, \overline{r})\} \cap \mathrm{int}(O^c).
$$
Then $u \leq v$ on $O$.
\end{lemma}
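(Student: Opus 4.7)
The plan is to argue by contradiction, using the strict $\epsilon_2 > 0$ slack in the supersolution inequality together with the natural monotonicity of the nonlocal operator in \eqref{metricequation} at a maximum. The key observation is that if $z_0$ is an interior maximum of $u - v$, then $u(z_0 - y) - u(z_0) \leq v(z_0 - y) - v(z_0)$ for every $y \in \mathbb{R}^n$, and applying $\exp$ together with the nonnegativity of $J(y) \exp(-y \cdot p)$ yields
\begin{equation*}
\int J(y) \exp(-y\cdot p) \exp(u(z_0 - y) - u(z_0))\, dy \leq \int J(y) \exp(-y\cdot p) \exp(v(z_0 - y) - v(z_0))\, dy.
\end{equation*}
Combining this with the subsolution inequality for $u$ at $z_0$ and the strict supersolution inequality for $v$ at $z_0$ forces $\mu \geq \mu + \epsilon_2$, the desired contradiction.

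To set this up, I would suppose $M := \sup_O (u - v) > 0$ and pick a sequence $z_k \in O$ with $(u - v)(z_k) \to M$. In the unbounded case $O = \mathbb{R}^n \setminus D$, hypotheses \eqref{metricprobleminftycondition} and \eqref{oppositeinfinitycondition} imply $u(z) - v(z) \to -\infty$ as $|z| \to \infty$, so $\{z_k\}$ is bounded; in the bounded case this is automatic. Extracting a subsequence, $z_k \to z_0 \in \overline{O}$, and upper semicontinuity of $u - v$ gives $(u - v)(z_0) \geq M > 0$. Note that on $\mathrm{int}(O^c)$ the boundary hypothesis forces $u - v \leq 0$, so the limit point $z_0$ can only lie in $O \cup \partial O$.

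The main obstacle is to rule out $z_0 \in \partial O$. The equation is posed only on the open set $O$, and the boundary hypothesis $u \leq v$ is imposed on $\mathrm{int}(O^c)$ (or on the thickened set $E$ in the bounded case), so $\partial O$ itself is uncovered, and the monotonicity contradiction cannot be applied directly if the maximum sits on $\partial O$. I would handle this via a perturbation argument that exploits the $\epsilon_2$ slack. One option is to replace $u$ by its sup-convolution $u^\delta$, which is $\mathrm{USC}$ and semiconcave, inherits a subsolution property with an $o_\delta(1)$ error in the nonlocal equation, and can be shown to attain its maximum against $v$ at an interior point of $O$ once $\delta$ is small enough. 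A second option is to perturb $v$ slightly downward on $O$ near $\partial O$ via a smooth cutoff of the approximate supercorrector $w_\nu$ (whose strict sublinearity from \eqref{strictlysublinear} controls the perturbation globally), using the $\epsilon_2$ slack to preserve a strict supersolution property with a smaller but still positive slack, so that the maximum of $u - v$ is strictly increased on $O$ compared with $\partial O$. Sending the perturbation parameter to zero and applying the monotonicity computation from the first paragraph then closes the argument.
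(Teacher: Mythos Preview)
Your core observation is exactly the paper's: at a point where $u-v$ is (nearly) maximal, the monotonicity of the exponential nonlocal term combined with the strict $\epsilon_2$ gap forces a contradiction. The difference lies in how you handle the possibility that the supremum of $u-v$ is not attained in $O$, and here your proposal has a genuine gap.

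Neither of your two remedies actually pushes the maximum off $\partial O$. Sup-convolution regularizes $u$ but contains no mechanism that moves maxima into the interior; if the extremal behavior of $u-v$ is concentrated at $\partial O$, the same will be true for $u^\delta - v$. The second option---perturbing $v$ downward near $\partial O$ by a cutoff of $w_\nu$---is too vague to be a proof: you would need a perturbation that is strictly negative on a full one-sided neighborhood of $\partial O$, preserves a strict supersolution property, and whose effect you can quantify, and nothing about $w_\nu$ (which is merely sublinear at infinity and Lipschitz) provides the required local structure near $\partial O$.

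The paper avoids the attainment issue altogether by working with an \emph{approximate} maximum. Define $M=\sup_{\mathbb{R}^n\setminus\partial D}(u-v)$ and, for any $\gamma>0$, choose $z_0\in O$ (bounded by the growth hypotheses; not in $\mathrm{int}(D)$ since $u\le v$ there) with $(u-v)(z_0)>M-\gamma$. Then for every $z\in\mathbb{R}^n\setminus\partial D$ one has $u(z)-u(z_0)\le v(z)-v(z_0)+\gamma$, and after exponentiating and dropping the measure-zero set $\partial D$ from the integral, the subsolution and strict supersolution inequalities at $z_0$ combine to give
\[
(\exp(\gamma)-1)\int J(y)\exp(-y\cdot p)\exp\bigl(v(z_0-y)-v(z_0)\bigr)\,dy\;\ge\;\epsilon_2.
\]
The integral is uniformly bounded because $v$ is a supersolution of \eqref{metricequation}, so sending $\gamma\to 0$ yields the contradiction. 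No attainment, sup-convolution, or boundary perturbation is needed; the $\epsilon_2$ slack simply absorbs the $\gamma$ error.
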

\begin{proof}
We consider the case where $O = \mathbb{R}^n \backslash D$ and $D$ is a closed and bounded set. Define $M = \sup_{z \in \mathbb{R}^n \backslash \partial D} [u(z) - v(z)].$ Suppose for a contradiction that $M > 0$. Then there exists a large ball $O_2$ such that for any $\gamma > 0$ there is a point $z_0 \in O_2 \backslash D$ satisfying $u(z_0) - v(z_0) > M - \gamma$, due to (\ref{oppositeinfinitycondition}), (\ref{metricprobleminftycondition}), and the fact that $u \leq v$ on $\mathrm{int}(D)$. This means that for any $z \in \mathbb{R}^n \backslash \partial D$, we have that $u(z) - v(z) \leq u(z_0) - v(z_0) + \gamma$, which means that
\begin{equation} \label{blah8}
u(z) - u(z_0) \leq v(z) - v(z_0) + \gamma \text{ for } z \in \mathbb{R}^n \backslash \partial D.
\end{equation}
However, we know that
\begin{align} \label{blah1}
\begin{split}
\bar{J} - \int J(y) \exp(-y\cdot p)\exp(u(z_0 - y) - u(z_0)) dy - c(z_0) &\leq \mu \\
\bar{J} - \int J(y) \exp(-y\cdot p)\exp(v(z_0 - y) - v(z_0)) dy - c(z_0) &\geq \mu + \epsilon_2.
\end{split}
\end{align}
and $\partial D$ is of measure zero, so it can be excluded from the domain of integration. This means that at $z = z_0$, upon applying (\ref{blah8}) to the first equation of \eqref{blah1} and subtracting it from the second, we get
%\begin{equation} \label{newcomparisonthing}
%\int J(y) \exp(-y\cdot p)\exp(v(x_0 - y) - v(x_0) + \gamma) dy - \int J(y) \exp(-y\cdot p)\exp(v(x_0 - y) - v(x_0)) dy \geq \epsilon_2.
%\end{equation}
\begin{equation} \label{newcomparisonthing}
(\exp(\gamma)-1)\int J(y) \exp(-y\cdot p)\exp(v(x_0 - y) - v(x_0)) dy \geq \epsilon_2.
\end{equation}
However, since $v$ is a supersolution of (\ref{metricequation}), we know that
$$
\displaystyle \int J(y) \exp(-y\cdot p)\exp(v(z_0 - y) - v(z_0)) dy \leq C,
$$
where $C$ is uniform in $z_0$. This means that we can take $\gamma$ sufficiently small in (\ref{newcomparisonthing}) to reach a contradiction.

If $O$ is a bounded set, then the proof proceeds mostly in the same manner
%(see \cite{chasseigne} for another case of a nonlocal Dirichlet problem in a bounded domain where the integral kernel has compact support).
In this case we define $M = \sup_{z \in O} [u(z) - v(z)],$ and $O$ takes the place of $O_2$ in the above proof. Then \eqref{blah8} holds for $z \in E$, and because the support of $J$ is contained in $B(0, \overline{r})$, we know that \eqref{blah8} holds for every point in the domain of integration. Therefore, we can apply \eqref{blah8} and proceed with the rest of the proof to reach the same conclusion.
\end{proof}
Lemma \ref{comparisonlemma} implies that $u \leq \tilde{v}$, which implies that $u \leq v$ upon taking $\epsilon \rightarrow 0$. Therefore, if we assume (\ref{oppositeinfinitycondition}) then we have Proposition \ref{comparison}. So it suffices to consider $v$ that satisfies (\ref{newinfinitycondition}). Define
$$
\Lambda := \left\{0 \leq \lambda \leq 1: \liminf_{|z|\rightarrow \infty} \frac{\lambda v(z) - u(z)}{|z|} \geq 0\right\} \text{ and } \overline{\lambda} := \sup{\Lambda}.
$$
We first show that $\Lambda = [0, \overline{\lambda}]$. (\ref{metricprobleminftycondition}) implies that $0 \in \Lambda$, (\ref{newinfinitycondition}) implies that if $a, b \in \Lambda$, then $[a, b] \subset \Lambda$, and we can check that $\overline{\lambda} \in \Lambda$.
%To see this, take $\epsilon > 0$ and $\lambda \in \Lambda$ such that $\overline{\lambda} \leq \lambda + \epsilon$, and note that by (\ref{metricprobleminftycondition}),
%\begin{multline}
%\liminf_{|z|\rightarrow \infty} \frac{\overline{\lambda}v(z) - u(z)}{|z|} \geq \frac{\overline{\lambda}}{\lambda + \epsilon} \liminf_{|z|\rightarrow \infty} \frac{(\lambda+\epsilon)v(z) - u(z)}{|z|} \\
%\geq \frac{\overline{\lambda}}{\lambda + \epsilon}\left(\epsilon \liminf_{|z|\rightarrow \infty} \frac{v(z)}{|z|} \right) \geq \frac{-C\overline{\lambda}\epsilon}{\lambda + \epsilon},
%\end{multline}
%for some positive $C$. Now if we take $\epsilon \rightarrow 0$, we get that $\overline{\lambda} \in \Lambda$.
Our objective is now to show that $\overline{\lambda} = 1$. We suppose that $\overline{\lambda} > 0$; this can be proven by an argument very similar to the one that follows.
%If we suppose for a contradiction that $\overline{\lambda} = 0$, then $u$ must be strictly sublinear at infinity.
For each $R > 1$, define the auxiliary function $\varphi_R(z) := R - (R^2 + |z|^2)^{\frac{1}{2}}.$
Note that the first and second derivatives of this function are bounded independent of $R$, $-\varphi_R$ grows linearly at infinity, and $\varphi_R \rightarrow 0$ as $R \rightarrow \infty$.
Take $\lambda$ satisfying $0 < \lambda < \overline{\lambda} \leq 1$. Fix constants $0 < \eta < 1$ and $\theta > 1$ to be selected subsequently. Define
%\begin{align*}
%\hat{v} &:= \lambda(1+4\eta)v + (1-\lambda(1+4\eta)) w_\nu, \\
%\hat{v}_R &:= (1-\eta)\hat{v} - \eta \theta \varphi_R \\
%&= \lambda(1+4\eta)(1-\eta)v + (1-\eta)(1-\lambda(1+4\eta))w_\nu - \eta\theta\varphi_R.
%\end{align*}
$$
\hat{v} := \lambda v + (1-\lambda) w_\nu, u_{R} := (1-\eta)u + \eta \theta \varphi_R.
$$
%In addition, (\ref{metricprobleminftycondition}) implies that we can take $\theta$ sufficiently large so that
%$$\theta:= 1+ 4\limsup_{|y|\rightarrow \infty} \frac{-v(y)}{|y|},$$
%and observe that
%by the previous inequality, the fact that $w_\nu$ is strictly sublinear at infinity, $\lambda \in \Lambda$, and the definition of $\varphi_R$,
%we have for every $R > 1$,
%the fact that $\lambda \in \Lambda$
Using \eqref{metricprobleminftycondition} and the fact that $w_\nu$ is strictly sublinear at infinity, we can take
$$
\theta:= -2\liminf_{|z| \rightarrow \infty} \frac{u(z)}{|z|},
$$
so that for any $\eta > 0, R > 1$,
$$
\liminf_{|z|\rightarrow \infty} \frac{\hat{v}(z) - u_R(z)}{|z|} = \displaystyle \liminf_{|z| \rightarrow \infty} \frac{\lambda v(z) - u(z)}{|z|} + \frac{\eta u(z) - \eta \theta \varphi_R(z)}{|z|} > 0.
$$
Because the nonlocal term is convex and $\lambda < 1$, we have that
%$$
%1-\int J(y) \exp(\hat{v}(z-y) - \hat{v}(z))dy - c(z) \geq \lambda(1+4\eta) \mu + (1-\lambda (1+4\eta))( \overline{H}(0)-\nu),
%$$
%and then we use convexity again to see that
%$$
%1-\int J(y) \exp(\hat{v}_R(z-y) - \hat{v}_R(z))dy - c(z) \geq \tilde{\mu}(\eta),
%$$
$$
\bar{J}-\int J(y) \exp(\hat{v}(z-y) - \hat{v}(z))dy - c(z) \geq \lambda \mu + (1-\lambda)(\overline{H}(0)-\nu) =: H > \mu.
$$
%where
%$$
%\tilde{\mu}(\eta) := \lambda(1+4\eta)(1-\eta) \mu + (1-\eta)(1-\lambda(1+4\eta))(\overline{H}(0) -\nu) - C\exp(\theta)\eta.
%$$
In addition, since $\varphi_R$ has bounded derivatives, we can take $\eta$ sufficiently small depending on $\theta$ so that
%for any $\nu_2 > 0$ we can take $\eta$ sufficiently small depending on $\theta$ so that
%$$
%\bar{J} - \int J(y) \exp(u_R(z-y) - u_R(z)) dy - c(z) \leq \mu + \nu_2 \text{ on } \mathbb{R}^n,
%$$
%and so we can take $\nu_2$ sufficiently small so that
$$
\bar{J} - \int J(y) \exp(u_R(z-y) - u_R(z)) dy - c(z) < H \text{ on } \mathbb{R}^n.
$$
%Since $\overline{H}(0) - \nu > \mu$, for $\eta$ sufficiently small we have $\tilde{\mu}(\eta) > \mu$. This means that $\hat{v}_R$ is a strict supersolution of (\ref{metricequation}).
Applying Lemma \ref{comparisonlemma} to $\hat{v}$ and $u_R$, sending $R \rightarrow \infty$, and noting that $w_\nu \geq u$ on $D$, we have that
%$$
%u-\hat{v}_R \leq \sup_{\mathrm{int}(D)}(u - \hat{v}_R) \text{ in } \mathbb{R}^n\backslash D.
%$$
%Now by sending $R \rightarrow \infty$, we can deduce that
\begin{equation}
\label{uvhatinequality}
\hat{v} - (1-\eta)u \geq \inf_{\mathrm{int}(D)} (\hat{v} - (1-\eta)u) \geq 0 \text{ in } \mathbb{R}^n \backslash D.
\end{equation}
Since $w_\nu$ is strictly sublinear at infinity, it follows that
$$
\liminf_{|z|\rightarrow \infty} \frac{\lambda v(z) - (1-\eta)u(z)}{|z|} \geq 0,
$$
and therefore $\overline{\lambda} \geq \frac{\lambda}{1-\eta}$. If $\overline{\lambda} < 1$, then we can take $\lambda \rightarrow \overline{\lambda}$ to obtain $\overline{\lambda} \geq \frac{\overline{\lambda}}{1-\eta}$, which is a contradiction. Therefore, $\overline{\lambda} = 1$. This implies that the preceding analysis holds for any $0 < \lambda < 1$, and so taking $\eta \rightarrow 0$ and $\lambda \rightarrow 1$ in (\ref{uvhatinequality}) shows that $u \leq v$, which was what we wanted to show.
\end{proof}
Now that we have proven a comparison principle, we will use Perron's method to prove a kind of well-posedness statement for (\ref{metricequation}). However, because nonlocal Dirichlet problems require the boundary condition to be prescribed on the entire complement of the domain, rather than just the boundary, our barrier will be discontinuous on the boundary. Fortunately, Lemma \ref{comparisonlemma} allows us to apply comparison in this situation.
\begin{proposition} \label{metricproblemwellposed}
For each fixed $p, z_1 \in \mathbb{R}^n$, $\omega \in \Omega_3$, and $\mu < \overline{H}(p)$, there exists a solution $m_\mu(z) = m_\mu(z, z_1, \omega; p) \in C(\mathbb{R}^n \backslash \partial D_1)$ of \eqref{metricequation}, a.s. in $\omega$, where $D_1(z_1) = z_1 + B_1$, and $m_\mu$ satisfies the boundary condition
\begin{equation}
\label{boundarycondition}
m_\mu(z, z_1, \omega) = w_\nu(z, \omega) - w_\nu(z_1, \omega) \text{ on } D_1(z_1)
\end{equation}
and at infinity satisfies
\begin{equation} \label{inftycondition2}
\limsup_{|z| \rightarrow \infty} \frac{m_\mu(z, z_1, \omega)}{|z|} \leq 0.
\end{equation}
This solution is uniquely defined except on $\partial D_1$. In addition, the discontinuity of $m_\mu$ at $\partial D_1$ is a jump discontinuity and the size of the jump is bounded by a constant that depends on $\mu$ and the behavior of $w_\nu(z) - w_\nu(z_1)$ on $D_1(z_1)$.
\end{proposition}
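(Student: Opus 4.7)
The plan is to apply Perron's method on $\mathbb{R}^n\setminus D_1$ with the datum $v(\cdot):=w_\nu(\cdot,\omega)-w_\nu(z_1,\omega)$ prescribed on $D_1$, and then invoke Proposition \ref{comparison} for uniqueness. The function $v$ itself serves as a strict supersolution of \eqref{metricequation}: by \eqref{supercorrector},
\[
\bar{J} - \int J(y)e^{-y\cdot p}e^{v(z-y)-v(z)}\,dy - c(z)\;\geq\;\overline{H}(p)-\nu\;>\;\mu,
\]
and by \eqref{strictlysublinear} one has $v(z)/|z|\to 0$; moreover $v$ realizes \eqref{boundarycondition} trivially. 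For a subsolution I take $\underline u(z):=-a|z-z_1|$ with $a>0$ large. This function is smooth on $\mathbb{R}^n\setminus\{z_1\}\supset\mathbb{R}^n\setminus D_1$; since $v$ is Lipschitz on $D_1$ with $v(z_1)=0$ and strictly sublinear at infinity, $a$ may be chosen large enough so that $\underline u\leq v$ on all of $\mathbb{R}^n$. The growth conditions $\limsup_{|z|\to\infty}\underline u(z)/|z|=-a\leq 0$ and $\liminf_{|z|\to\infty}\underline u(z)/|z|=-a>-\infty$ match the hypotheses of Proposition \ref{comparison}. To verify that $\underline u$ is a subsolution of \eqref{metricequation} on $\mathbb{R}^n\setminus D_1$, observe that for $z$ outside $D_1$ and $y\in B(0,r_1)$ (where $J\geq A$ by \eqref{Jassumption}) with $y\cdot(z-z_1)>0$, the exponent $\underline u(z-y)-\underline u(z)=a(|z-z_1|-|z-y-z_1|)$ is positive and of order $a$; integrating over such $y$ shows that the nonlocal integral is at least of order $\exp(ar_1/C)$, uniformly in $z\in\mathbb{R}^n\setminus D_1$, so for $a$ large enough it exceeds $\bar{J}-c(z)-\mu$.

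With the barriers in hand, set
\[
m_\mu(z) := \sup\bigl\{u(z) : u\in\mathrm{USC}(\mathbb{R}^n),\ \underline u\leq u\leq v,\ u\text{ is a subsolution of \eqref{metricequation} on }\mathbb{R}^n\setminus D_1\bigr\}.
\]
The family is nonempty: the piecewise function $u^\star:=v$ on $D_1$ and $u^\star:=\underline u$ on $\mathbb{R}^n\setminus D_1$ is USC (since $\underline u\leq v$ on $\partial D_1$), and remains a subsolution on $\mathbb{R}^n\setminus D_1$ because replacing $\underline u(z-y)$ by $v(z-y)\geq\underline u(z-y)$ when $z-y\in D_1$ only increases the nonlocal integral and thus decreases the left-hand side of \eqref{metricequation}. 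The standard Perron argument, adapted to the nonlocal setting, shows that the USC envelope $(m_\mu)^*$ is a subsolution and the LSC envelope $(m_\mu)_*$ is a supersolution of \eqref{metricequation} on $\mathbb{R}^n\setminus D_1$; both envelopes equal $v$ on $\mathrm{int}(D_1)$ (since $u^\star=v$ there and $m_\mu\leq v$ always), and both inherit the growth hypotheses of Proposition \ref{comparison} from $v$ and $\underline u$. Applying that proposition yields $(m_\mu)^*\leq (m_\mu)_*$ on $\mathbb{R}^n\setminus D_1$, so the two envelopes coincide and $m_\mu$ is continuous on $\mathbb{R}^n\setminus\partial D_1$ and solves the equation there. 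Condition \eqref{inftycondition2} is inherited from $v$.

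Uniqueness on $\mathbb{R}^n\setminus\partial D_1$ follows immediately from Proposition \ref{comparison}, applied to any two candidate solutions that agree with $v$ on $\mathrm{int}(D_1)$. The discontinuity of $m_\mu$ at $\partial D_1$ is a jump because on the interior side $m_\mu(z)=v(z)$, while on the exterior side $m_\mu(z)\in[\underline u(z),v(z)]$; the jump size is therefore bounded by $\sup_{\partial D_1}(v-\underline u)$, a quantity depending on $\mu$ (through the choice of $a$) and on the behavior of $w_\nu-w_\nu(z_1)$ on $D_1$, as claimed. The main obstacle I anticipate is the uniform-in-$z$ quantitative estimate that $\underline u$ is a subsolution when $\mu$ is very negative, which requires carefully exploiting the positivity of $J$ on $B(0,r_1)$ to produce the required exponential blow-up of the nonlocal integral. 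The tolerance of Lemma \ref{comparisonlemma} for a boundary discontinuity is precisely what allows the Perron construction to close despite the unavoidable jump at $\partial D_1$ intrinsic to nonlocal Dirichlet problems.
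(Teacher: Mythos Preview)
Your proposal is correct and follows essentially the same approach as the paper: Perron's method with the approximate supercorrector $w_\nu-w_\nu(z_1)$ as an upper barrier and a cone $-a|z-z_1|$ (the paper adds a harmless constant shift $-a_2$) as the lower barrier, followed by the comparison principle (Proposition~\ref{comparison}) for uniqueness and continuity off $\partial D_1$. The only cosmetic difference is that you impose $\underline u\le u\le v$ directly in the Perron class and use the piecewise candidate $u^\star$ to attain the datum on $D_1$, whereas the paper imposes $u\le v$ only on $D_1$ and recovers equality there by the monotonicity of the nonlocal term; both routes yield the same solution and the same jump bound.
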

\begin{proof}
Assume without loss of generality that $p = 0, z_1 = 0$, and suppose for simplicity $\mathrm{supp}(J)$ is contained in the unit ball; if the support of $J$ is larger the proof can be adjusted by simply taking $D_1$ to be a larger ball. Also suppose for simplicity that $J$ is constant; this simplification can be made because $J(0) > 0$ and $J$ is symmetric.
We define
\begin{multline}
m_\mu(z) := \sup\{u(z) : u \in \mathrm{USC}(\mathbb{R}^n), u \text{ is a subsolution of } (\ref{metricequation}) \text{ in } \mathbb{R}^n  \backslash \overline{D}_1, \\ u(z) \leq w_\nu(z) - w_\nu(0) \text{ on } D_1, \text{ and } \limsup_{|z|\rightarrow \infty} \frac{u(z)}{|z|} \leq 0\}
\end{multline}
To show that $m_\mu$ exists, we need to show that the admissible set is nonempty, that is, we must build a barrier. We can check that for $a_1 > 0$ sufficiently large and a suitable constant $a_2$, $\tilde{u}(z) := -a_1|z|-a_2$ is an admissible subsolution of (\ref{metricequation}).
%First we show that if $z \in \mathbb{R}^n \backslash D_1$,
%$$\int_{B(0, 1)} \exp(a_1 (|z| - |z-y|))dy$$
%can be made arbitrarily large by taking $a_1$ large, uniformly in $z$.
Define $A_z := \{y \in B(z, 1): |y| \leq |z|-\frac{1}{2}\}.$ Because $z \notin B(0, 1)$, $|A_z| \geq C > 0$ uniformly in $z$. This means that
\begin{align*}
\int_{B(0, 1)} \exp(a_1 (|z| - |z-y|))dy &\geq \int_{A_z} \exp(a_1(|z| - |y|))dy \\
&\geq \int_{A_z} \exp\left(\frac{a_1}{2}\right) dy = C \exp\left(\frac{a_1}{2}\right),
\end{align*}
In addition, if we take $a_2 = -\inf_{z \in \overline{D}_1} [w_\nu(z) - w_\nu(0)],$ then $\tilde{u}(z) \leq w_\nu(z) - w_\nu(0)$ on $D_1$, so $\tilde{u}(z)$ is an admissible subsolution of \eqref{metricequation} for $a_1$ large and $a_2$ a suitable constant. This implies that $m_\mu$ is well defined. On $\mathbb{R}^n \backslash \overline{D}_1$, we have that $(m_\mu)^*$ is a subsolution of (\ref{metricequation}), because it is the supremum of subsolutions, and the Perron construction (see \cite{nonlocalcomparison} for the construction in the nonlocal setting) gives us that $(m_\mu)_*$ is a supersolution of (\ref{metricequation}). In addition, note that $m_\mu(z) = w_\nu(z) - w_\nu(0)$ on $D_1$ because any subsolution of \eqref{metricequation} on $\mathbb{R}^n \backslash \overline{D}_1$ remains a subsolution if its value is increased on $D_1$. It is true by construction that $m_\mu(z) \leq w_\nu(z) - w_\nu(0)$, and any subsolution $u$ of (\ref{metricequation}) on $\mathbb{R}^n \backslash \overline{D}_1$ remains a subsolution on $\mathbb{R}^n \backslash \overline{D}_1$ if $u$ is increased on $D_1$.

Therefore, $m_\mu$ is a solution of (\ref{metricequation}) satisfying (\ref{boundarycondition}). Because $w_\nu$ is continuous, we have that $(m_\mu)^*(z) = (m_\mu)_*(z)$ on $D_1$. This means that we can apply Proposition \ref{comparison} to say that
%$(m_\mu)^* \leq (m_\mu)_*$ on $\mathbb{R}^n \backslash \overline{D}_1$, which means that
$m_\mu$ is uniquely defined and continuous on $\mathbb{R}^n \backslash \partial D_1$. Note, however, that $\tilde{u}(z)$ is not equal to $w_\nu(z) - w_\nu(0)$ on $\partial D_1$. This is also not merely a matter of adjusting $\tilde{u}(z)$ at the boundary, because the barrier must lie below $w_\nu(z) - w_\nu(0)$ on all of $D_1$, rather than just at the boundary, and due to the nonlocal character of the equation, changing the values of $\tilde{u}(z)$ on $D_1$ can affect whether it is a subsolution of \eqref{metricequation}. This causes the resulting solution $m_\mu$ to have a possible discontinuity at $\partial D_1$.
%However, we can see that since $\tilde{u}$ is an admissable subsolution, $m_\mu$ is bounded below around $\partial D_1$, so the discontinuity at the boundary is a jump discontinuity, and we know that $\lim_{z \rightarrow \partial D_1} m_\mu(z, 0) \geq -a_1 - a_2,$ where the limit is taken so that $z$ approaches $\partial D_1$ from the outside.
We can check that the size of the jump discontinuity of $m_\mu$ at $\partial D_1$ is bounded above by $\sup_{z \in \partial D_1} [w_\nu(z) - w_\nu(0)] + a_1 + a_2$, and so we have the claim of Proposition \ref{metricproblemwellposed}.
\end{proof}

\begin{remark}
Because the value of $m_\mu$ on $\partial D_1$ is not unique, we will define the value of $m_\mu$ there to be $m_\mu(z, z_1, \omega) = w_\nu(z, \omega) - w_\nu(z_1, \omega)$. This implies that $m_\mu(z, z_1, \omega) = w_\nu(z, \omega) - w_\nu(z_1, \omega)$ on $D_1$, an essential property for showing that $m_\mu$ is superadditive and jointly stationary.
\end{remark}

\subsection{Properties of $m_\mu$}
In this subsection, we prove various properties of $m_\mu$. We first show that $m_\mu$ satisfies the same oscillation bounds as those proven in Section \ref{sec:estimates}.
\begin{proposition} \label{metricproblemoscprop}
Assume that $\mu < \overline{H}(p)$ and that the assumptions of Proposition \ref{osctheorem} hold. There exists a uniform constant $C(p, \mu)$ such that for all $z, z_1 \in \mathbb{R}^n$, $R > \frac{1}{2}$ and $\omega \in \Omega_3$,
\begin{equation} \label{metricproblemoscboundfirst}
\osc_{B(z, R)} m_\mu(\cdot, z_1, \omega) \leq CR.
\end{equation}
In particular, for any $r > 0$ there exists a uniform constant $C$ such that for all $z, z_1 \in \mathbb{R}^n$
\begin{equation} \label{metricproblemoscbound}
\limsup_{\lambda \rightarrow 0} \osc_{B(\frac{z}{\lambda}, \frac{r}{\lambda})} \lambda m_\mu(\cdot, z_1, \omega) \leq Cr.
\end{equation}
\end{proposition}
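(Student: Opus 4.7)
The strategy is to mirror the arguments of Section \ref{sec:estimates}, adapting them to the metric problem. The equation \eqref{metricequation} has essentially the same nonlocal structure as \eqref{wlequation}, but without the $\lambda v^\lambda + \lambda w^\lambda$ terms and with $\mu$ on the right-hand side. In particular, defining
\[
\Phi_\mu(z) := \int J(y) \exp(-y\cdot p) \exp(m_\mu(z-y) - m_\mu(z))\, dy,
\]
the equation gives $\Phi_\mu(z) = \bar{J} - c(z) - \mu$ directly on $\mathbb{R}^n \backslash D_1$, so $\Phi_\mu$ is uniformly bounded above by a constant depending only on $\inf c$ and $\mu$. This is the direct analogue of the key uniform-in-$\lambda$ bound on $\Phi^\lambda$ exploited in Section \ref{sec:estimates}.

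With $\Phi_\mu$ bounded, I would fix a reference point $z^* \in \mathbb{R}^n \backslash \overline{D_1}$ at positive distance from $\partial D_1$, define $\tilde m(z) := m_\mu(z, z_1, \omega) - m_\mu(z^*, z_1, \omega)$, and replicate the proofs of Lemmas \ref{positivecase}, \ref{negativecase} and Propositions \ref{lptheorem}, \ref{psiboundprop}, \ref{osctheorem} for $\tilde m$ to obtain $\osc_{B(z, R)} m_\mu(\cdot, z_1, \omega) \leq CR$ on balls not meeting $D_1$. The positive-part argument transfers essentially verbatim, since it relies only on the upper bound on the $\Phi$-quantity and the overlapping-ball construction. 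For the negative part, the Section \ref{sec:estimates} proof used the constant function $\varphi \equiv 0$ as a subsolution with the precise choice of $v^\lambda(0)$ forced by the equation; this is unavailable here, so I would instead use the approximate supercorrector $w_\nu$ from Proposition \ref{firstconvergenceprop}. Since $\overline{H}(p) - \nu > \mu$ by the choice of $\nu$, an appropriate shift of $w_\nu$ is a strict supersolution of \eqref{metricequation}, and invoking Lemma \ref{comparisonlemma} yields an upper bound on $m_\mu$ that plays the role of the $w^\lambda \geq 0$ bound in Lemma \ref{negativecase}, producing the desired contradiction when $\Psi_\mu \to 0$.

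Extending the bound to arbitrary balls requires handling the contribution from inside $D_1$ and across $\partial D_1$. Since $D_1 = z_1 + B_1$ has fixed radius and $m_\mu(z, z_1, \omega) = w_\nu(z, \omega) - w_\nu(z_1, \omega)$ on $D_1$ with $w_\nu$ Lipschitz, the oscillation of $m_\mu$ on $D_1$ is bounded uniformly. The jump discontinuity at $\partial D_1$, which is bounded by Proposition \ref{metricproblemwellposed} in terms of $\mu$ and the behavior of $w_\nu$ on $D_1$, contributes only a uniformly bounded additive constant. These boundary contributions can be absorbed into $CR$ for $R \geq r_1/2$ (and a fortiori for $R > 1/2$). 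Uniformity of $C$ in $z_1$ follows from the covariance of the construction under simultaneous translations of $z$ and $z_1$, together with the stationarity of $Dw_\nu$, mirroring the passage from Proposition \ref{osctheorem} to Theorem \ref{finalosctheorem}. The rescaled statement \eqref{metricproblemoscbound} is then immediate from \eqref{metricproblemoscboundfirst}.

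The main obstacle I expect is the replacement of the constant subsolution in the analogue of Lemma \ref{negativecase}: constructing a barrier from $w_\nu$ that satisfies the hypotheses of Lemma \ref{comparisonlemma} and yields constants uniform in $z_1$ and $\omega$ requires careful bookkeeping, in particular using the strict sublinearity of $w_\nu$ and the stationarity of $Dw_\nu$. A secondary difficulty is that the equation fails on $D_1$, so the overlapping-ball chain used in the proofs of Lemmas \ref{positivecase} and \ref{negativecase} must be terminated before the chain reaches $D_1$, and the boundary contribution handled separately; fortunately, Lemma \ref{comparisonlemma}'s relaxed requirement of inequality only on $\mathrm{int}(D_1)$ accommodates the jump at $\partial D_1$ and makes this manageable.
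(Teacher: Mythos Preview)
Your outline is largely correct and matches the paper's strategy up to the point where you handle the lower bound on $\Phi_\mu$ (equivalently, the negative-part analogue of Lemma \ref{negativecase}). There, however, you have a genuine gap.

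You claim the constant-subsolution argument from Lemma \ref{negativecase} is ``unavailable here'' and propose to use $w_\nu$ as a strict supersolution of \eqref{metricequation} to produce ``an upper bound on $m_\mu$ that plays the role of the $w^\lambda \geq 0$ bound.'' But $w^\lambda \geq 0$ is a \emph{lower} bound, and an upper bound on $m_\mu$ (which you already have, namely $m_\mu \leq w_\nu(\cdot) - w_\nu(z_1)$, from the well-posedness construction) does nothing to prevent $m_\mu$ from running off to $-\infty$ or to force $\Psi_\mu$ away from zero. So the contradiction you envision does not close.

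In fact the constant-subsolution argument \emph{is} available, and is what the paper uses. Suppose $\Phi_\mu(\hat z) = \eta < \bar J - \rho$ at some point. The equation then pins down $\mu = \bar J - c(\hat z) - \eta$, and inserting any constant $\varphi \equiv C_1$ into the left side of \eqref{metricequation} gives (using the symmetry of $J$) at most $-c(z) \leq \bar J - c(\hat z) - \eta = \mu$, by \eqref{c0assumption} and the assumption $\eta < \bar J - \rho$. So constants are global subsolutions. Choosing $C_1 = \inf_{D_1}(w_\nu(\cdot) - w_\nu(z_1))$ and applying Proposition \ref{comparison} yields $m_\mu \geq C_1$ everywhere. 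This contradicts the fact that $m_\mu$ decays linearly at infinity when $\mu < \overline{H}(p)$ (this is the paper's Lemma \ref{metricfunctionlemma}, proved via the supercorrector $w_\nu$). You never invoke this linear-decay fact, and without it there is nothing for the constant lower bound to contradict. The supercorrector $w_\nu$ does enter the argument, but only through the linear-decay lemma, not as a direct comparison function in the way you propose.
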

\begin{proof}
We first state a lemma showing that $m_\mu$ must have linear growth at infinity. Because the proof follows in the same way as Remark 6.4 of \cite{takispaper}, we will omit it.
\begin{lemma} \label{metricfunctionlemma}
For each $\mu < \overline{H}(p)$ and $\omega \in \Omega_3$, $\displaystyle \limsup_{|z| \rightarrow \infty} \frac{m_\mu(z, 0, \omega)}{|y|} < 0.$
\end{lemma}
%\begin{proof}
%Because this follows in the same way as Remark 6.4 of \cite{takispaper}, we sketch the proof here. Choose $\tilde{\mu}$ such that $\mu < \tilde{\mu} < \overline{H}$, and then subtract a small multiple of $\varphi_R$ (from (\ref{varphiR})) from $m_{\tilde{\mu}}$. Because $\tilde{\mu} > \mu$, we can take the perturbation small enough so that the perturbed $m_{\tilde{\mu}}$ remains a supersolution of (\ref{metricequation}) for $\mu$, and then we can apply Theorem \ref{comparison} to show that $m_{\tilde{\mu}} \geq m_\mu$.
%\end{proof}
We now move to Proposition \ref{metricproblemoscprop}. Because the proof will involve repeating the arguments of Section \ref{sec:estimates}, we will give a sketch of the proof. First we note that $w_\nu$ is Lipschitz continuous, and the jump discontinuity at $\partial B(0, 1)$ is controlled by Proposition \ref{metricproblemwellposed}. Therefore, since we are considering $R > \frac{1}{2}$, we can essentially consider $m_\mu$ and \eqref{metricequation} as if it were set in $\mathbb{R}^n$ by increasing $C$.
%since the jump discontinuity and the behavior of $w_\nu$ on $D_1$ is bounded, and so it can be accounted for by increasing the constant $C$.
We would like to apply the methods of Section \ref{sec:estimates}. Define
$$
\Phi(z) := \int J(y)\exp(-y\cdot p) \exp(m_\mu(z-y, z_1) - m_\mu(z, z_1)) dy,
$$
and note that this is the nonlocal term of \eqref{metricequation} and is the same as the definition of $\Phi^\lambda$ with $m_\mu$ in the place of $w^\lambda$. We now need to prove that $\Phi$ is uniformly bounded above and uniformly positive, as these are the essential properties of $\Phi$ required to apply the proof of Proposition \ref{psiboundprop} and Proposition \ref{osctheorem}.

We can show that $\Phi$ must be uniformly (in $z$) bounded above by noting that $m_\mu$ solves (\ref{metricequation}), that $c(z, \omega)$ is uniformly bounded, and that $\mu$ is a constant. To show that $\Phi$ is uniformly positive, we will make use of Lemma \ref{metricfunctionlemma} and \eqref{c0assumption}. We claim that $\Phi(z) \geq \bar{J} - \rho > 0$ for all $z \in \mathbb{R}^n$. Suppose for a contradiction that there is a point $\hat{z}$ such that $\Phi(\hat{z}) = \eta < \bar{J} - \rho$. Then we can write $\mu = \bar{J} - c(\hat{z}) - \eta$. If we now consider a constant function $\varphi(z) := C_1$, and insert it into \eqref{metricequation} in the place of $m_\mu$, then on the left hand side for any $z \in \mathbb{R}^n$ we get
\begin{equation*}
\bar{J}-\int J(y) \exp(-y\cdot p) dy - c(z) \leq -c(z) \leq \bar{J} - c(\hat{z}) - \eta = \mu,
\end{equation*}
where the second inequality follows due to \eqref{c0assumption} and our assumption that $\eta < \bar{J} - \rho$. This means that $\varphi$ is a subsolution of \eqref{metricequation} on $\mathbb{R}^n$, and upon taking $C_1 := \inf_{D_1} w_\nu(\cdot) - w_\nu(z_1)$, we can apply Proposition \ref{comparison} to conclude that $C_1 \leq m_\mu$ on $\mathbb{R}^n$, which contradicts Lemma \ref{metricfunctionlemma}.

Therefore, if we define $\tilde{m}_\mu(\cdot, z_1) := m_\mu(\cdot, z_1) - m_\mu(z, z_1)$,  then we can apply the overlapping ball arguments of Propositions \ref{osctheorem}, \ref{lptheorem}, and \ref{psiboundprop} to conclude that \eqref{metricproblemoscboundfirst} holds for $\tilde{m}_\mu$, which of course means that it holds for $m_\mu$. The same constant will work for every $z \in \mathbb{R}^n$, and the fact that $w_\nu$ is uniformly Lipschitz continuous along with the fact that the barrier construction $\tilde{u}$ from Proposition \ref{metricproblemwellposed} works for any $z_1$ via simple translation implies that the constant is uniform in $z_1$ as well.
\end{proof}
In addition, by comparison and the stationarity of the coefficients of (\ref{metricequation}), we can show that $m_\mu$ is jointly stationary.
\begin{proposition} \label{jointlystationary}
The functions $m_\mu$ are jointly stationary i.e. for any $z, z_1, z_2 \in \mathbb{R}^n$:
$$
m_\mu(z, z_1, \tau_{z_2} \omega) = m_\mu(z + z_2, z_1 + z_2, \omega).
$$
\end{proposition}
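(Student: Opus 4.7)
The plan is to reduce the equality to a uniqueness statement for the metric problem. Set $\tilde{m}(z) := m_\mu(z+z_2, z_1+z_2, \omega)$ and show that $\tilde{m}$ solves the same metric problem (with base point $z_1$, in environment $\tau_{z_2}\omega$) as $m_\mu(\,\cdot\,, z_1, \tau_{z_2}\omega)$; then Proposition \ref{comparison} and Proposition \ref{metricproblemwellposed} force them to coincide.

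First I would verify that $\tilde{m}$ satisfies \eqref{metricequation} in the environment $\tau_{z_2}\omega$ on the translated exterior $\mathbb{R}^n \setminus D_1(z_1)$. Substituting $z' = z + z_2$ into the PDE satisfied by $m_\mu(\,\cdot\,, z_1+z_2, \omega)$, the nonlocal term transforms correctly (a change of variables in $y$ is not even needed, since the kernel $J$ and $\exp(-y\cdot p)$ are independent of $\omega$ and $z$). The coefficient $c$ becomes $c(z+z_2, \omega) = c(z, \tau_{z_2}\omega)$ by the stationarity assumption \eqref{stationarityassumption}, and the exterior $\mathbb{R}^n \setminus D_1(z_1+z_2)$ translates to $\mathbb{R}^n \setminus D_1(z_1)$. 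This gives the PDE in $\tau_{z_2}\omega$.

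Next I would check the boundary condition \eqref{boundarycondition} and the behavior at infinity. On $D_1(z_1)$,
\[
\tilde{m}(z) = w_\nu(z+z_2, \omega) - w_\nu(z_1+z_2, \omega),
\]
by construction of $m_\mu$ (and the convention on $\partial D_1$ fixed in the remark after Proposition \ref{metricproblemwellposed}). The key input here is that $Dw_\nu$ is stationary (Proposition \ref{firstconvergenceprop}), so the function $z \mapsto w_\nu(z+z_2, \omega) - w_\nu(z, \tau_{z_2}\omega)$ has zero gradient and is therefore constant in $z$; evaluating at $z_1$ gives
\[
w_\nu(z+z_2, \omega) - w_\nu(z_1+z_2, \omega) = w_\nu(z, \tau_{z_2}\omega) - w_\nu(z_1, \tau_{z_2}\omega),
\]
which matches the prescribed boundary value of $m_\mu(\,\cdot\,, z_1, \tau_{z_2}\omega)$ on $D_1(z_1)$. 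The infinity condition \eqref{inftycondition2} is preserved under a rigid translation.

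Finally, I would invoke uniqueness: both $\tilde{m}$ and $m_\mu(\,\cdot\,, z_1, \tau_{z_2}\omega)$ are continuous on $\mathbb{R}^n \setminus \partial D_1(z_1)$, solve \eqref{metricequation} in environment $\tau_{z_2}\omega$ outside $D_1(z_1)$, agree on $D_1(z_1)$ (and in particular on $\mathrm{int}(D_1(z_1))$), and both satisfy \eqref{inftycondition2} together with the lower linear-growth bound from Proposition \ref{metricproblemoscprop}. Applying Proposition \ref{comparison} in both directions yields $\tilde{m} \equiv m_\mu(\,\cdot\,, z_1, \tau_{z_2}\omega)$ on $\mathbb{R}^n \setminus \partial D_1(z_1)$, and the convention fixing the boundary value on $\partial D_1(z_1)$ extends the equality to all of $\mathbb{R}^n$. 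The only delicate point is the handling of $w_\nu$, which is not itself stationary; but since only the difference $w_\nu(z,\omega)-w_\nu(z_1,\omega)$ enters the boundary condition, stationarity of $Dw_\nu$ is exactly what is needed, and this is the step I expect to warrant the most care in a complete write-up.
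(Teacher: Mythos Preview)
Your proposal is correct and follows exactly the approach the paper indicates in its one-line justification (``by comparison and the stationarity of the coefficients of \eqref{metricequation}''): translate, use stationarity of $c$ for the PDE and stationarity of $Dw_\nu$ for the boundary data, then invoke uniqueness via Proposition~\ref{comparison}. One small remark: you cite Proposition~\ref{metricproblemoscprop} (which appears after this statement) for the lower linear-growth bound, but you can just as well get $\liminf_{|z|\to\infty} m_\mu(z)/|z| > -\infty$ directly from the barrier $\tilde u(z)=-a_1|z|-a_2$ in the Perron construction of Proposition~\ref{metricproblemwellposed}, avoiding the forward reference.
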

Next we prove a superadditivity property of $m_\mu$, which will be critical in applying the subadditive Ergodic Theorem. We observe that due to Propositions \ref{finaloscbound} and \ref{metricproblemoscprop}, there exists a uniform (in $z, z_1$) constant $C$ such that
\begin{equation} \label{wandmbound}
\sup_{z_2 \in B(z, 1)} (|w_\nu(z, \omega) - w_\nu(z_2, \omega)| + |m_\mu(z_2, z_1, \omega) - m_\mu(z, z_1, \omega)|) \leq C \text{ a.s. in } \omega.
\end{equation}
Now we define $\hat{m}_\mu(z, z_1, \omega):= m_\mu(z, z_1, \omega) - C,$ and we claim that $\hat{m}_\mu$ is superadditive.
\begin{proposition} \label{superadditivity}
For each $p \in \mathbb{R}^d, \mu < \overline{H}(p)$, $\omega \in \Omega_3$, and for all $z, z_1, z_2 \in \mathbb{R}^n$,
\begin{equation} \label{subadd}
\hat{m}_\mu(z, z_1, \omega) \geq \hat{m}_\mu(z, z_2, \omega) + \hat{m}_\mu(z_2, z_1, \omega).
\end{equation}
\end{proposition}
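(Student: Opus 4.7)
The plan is to establish the equivalent inequality
\[
m_\mu(z, z_2, \omega) + m_\mu(z_2, z_1, \omega) \leq m_\mu(z, z_1, \omega) + C
\]
for every $z \in \mathbb{R}^n$, which rearranges to \eqref{subadd}, by a comparison argument in the spirit of the metric-problem construction in \cite{takispaper}. Set
\[
\psi(z) := m_\mu(z, z_2, \omega) + m_\mu(z_2, z_1, \omega) - C,
\]
which we will compare against $m_\mu(\cdot, z_1, \omega)$.

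First I would verify that $\psi$ is a subsolution of the metric equation \eqref{metricequation} on the open set $U := \mathbb{R}^n \setminus (\overline{D_1(z_1)} \cup \overline{D_1(z_2)})$. Since $m_\mu(\cdot, z_2, \omega)$ is a viscosity solution on $\mathbb{R}^n \setminus \overline{D_1(z_2)} \supseteq U$, and the integral term in \eqref{metricequation} depends only on differences of the unknown, adding the constant $m_\mu(z_2, z_1, \omega) - C$ preserves the subsolution property on $U$. Likewise $m_\mu(\cdot, z_1, \omega)$ is a solution, hence supersolution, on $\mathbb{R}^n \setminus \overline{D_1(z_1)} \supseteq U$. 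The growth bounds at infinity required by Proposition \ref{comparison} follow from Lemma \ref{metricfunctionlemma} applied to both $m_\mu(\cdot, z_1, \omega)$ and $m_\mu(\cdot, z_2, \omega)$.

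The crux of the argument is to verify the boundary inequality $\psi(z) \leq m_\mu(z, z_1, \omega)$ on the interior of $U^c$, namely $D_1(z_1) \cup D_1(z_2)$. On $D_1(z_2)$ this is an immediate consequence of \eqref{wandmbound}: since $m_\mu(z, z_2, \omega) = w_\nu(z, \omega) - w_\nu(z_2, \omega)$ there and $|z - z_2| \leq 1$, one may write
\[
\psi(z) - m_\mu(z, z_1, \omega) = \bigl[w_\nu(z) - w_\nu(z_2)\bigr] + \bigl[m_\mu(z_2, z_1) - m_\mu(z, z_1)\bigr] - C \leq C - C = 0.
\]
On $D_1(z_1)$ one uses the boundary identity $m_\mu(z, z_1, \omega) = w_\nu(z, \omega) - w_\nu(z_1, \omega)$ and applies \eqref{wandmbound} in the other direction (fixing the second argument at $z_2$ and oscillating in the first between $z$ and $z_1$, both within $B(z_1, 1)$). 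Once the boundary inequality is established on all of $D_1(z_1) \cup D_1(z_2)$, Proposition \ref{comparison} (whose statement precisely tolerates $u \leq v$ only on the interior of the exceptional set) delivers $\psi \leq m_\mu(\cdot, z_1, \omega)$ on $U$, and combining with the boundary check gives the inequality globally.

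The main obstacle I anticipate is the verification on $D_1(z_1)$: a direct manipulation using \eqref{wandmbound} leaves the residual term $m_\mu(z_1, z_2, \omega) + m_\mu(z_2, z_1, \omega)$, which would need to be shown to be nonpositive (or at worst bounded by the slack built into $C$). This is morally an ``anti-symmetry'' consequence of superadditivity itself, and is automatic when $|z_1 - z_2| \leq 1$ (the boundary values $w_\nu(z_1) - w_\nu(z_2)$ and $w_\nu(z_2) - w_\nu(z_1)$ cancel exactly) and for $|z_1 - z_2|$ large by the linear decay of Lemma \ref{metricfunctionlemma}; the intermediate regime is handled by choosing the constant $C$ in the definition of $\hat{m}_\mu$ large enough to absorb the uniform bound on $m_\mu(z_1, z_2, \omega) + m_\mu(z_2, z_1, \omega)$ coming from Proposition \ref{metricproblemoscprop}. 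With this choice of $C$ the comparison closes and \eqref{subadd} follows.
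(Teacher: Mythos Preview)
Your overall plan---compare $\psi(z) = m_\mu(z,z_2,\omega)+m_\mu(z_2,z_1,\omega)-C$ against $m_\mu(\cdot,z_1,\omega)$ on $\mathbb{R}^n\setminus(\overline{D_1(z_1)}\cup\overline{D_1(z_2)})$ and verify the boundary inequality on each ball---is exactly the paper's approach, and your treatment of $D_1(z_2)$ via \eqref{wandmbound} matches the paper's argument for $\overline{B(z_2,1)}$.

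The gap is on $D_1(z_1)$. You correctly observe that applying \eqref{wandmbound} there leaves the residual $m_\mu(z_1,z_2,\omega)+m_\mu(z_2,z_1,\omega)$, but your proposed bound for it does not close: Proposition~\ref{metricproblemoscprop} gives only $|m_\mu(z_2,z_1,\omega)|\leq C|z_1-z_2|$, which grows with $|z_1-z_2|$, while Lemma~\ref{metricfunctionlemma} is an asymptotic $\limsup$ statement whose onset is not a priori uniform in $\omega$. So you cannot absorb the ``intermediate regime'' into a single constant $C$ from those tools alone.

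The paper avoids the residual entirely by invoking one additional comparison you did not use: since $w_\nu(\cdot,\omega)-w_\nu(\zeta,\omega)$ is a supersolution of \eqref{metricequation} at level $\overline{H}(p)-\nu>\mu$ that agrees with $m_\mu(\cdot,\zeta,\omega)$ on $D_1(\zeta)$, Proposition~\ref{comparison} yields
\[
w_\nu(z,\omega)-w_\nu(\zeta,\omega)\ \geq\ m_\mu(z,\zeta,\omega)\qquad\text{for all }z,\zeta\in\mathbb{R}^n.
\]
Applying this twice (with $\zeta=z_2$ and $\zeta=z_1$) to $z\in \overline{D_1(z_1)}$, where $m_\mu(z,z_1,\omega)=w_\nu(z,\omega)-w_\nu(z_1,\omega)$, gives directly
\[
m_\mu(z,z_1,\omega)=\bigl(w_\nu(z)-w_\nu(z_2)\bigr)+\bigl(w_\nu(z_2)-w_\nu(z_1)\bigr)\ \geq\ m_\mu(z,z_2,\omega)+m_\mu(z_2,z_1,\omega),
\]
with no constant needed on this ball. (Incidentally, the same inequality shows your residual satisfies $m_\mu(z_1,z_2,\omega)+m_\mu(z_2,z_1,\omega)\leq 0$, which would also rescue your version of the argument.)
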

\begin{proof}
We fix $z_1, z_2$ and rewrite (\ref{subadd}) in the form
\begin{equation} \label{subadd2}
m_\mu(z, z_1, \omega) \geq m_\mu(z, z_2, \omega) + m_\mu(z_2, z_1, \omega) - C.
\end{equation}
Considering both sides of (\ref{subadd2}) as functions of $z$ and taking $D = B(z_1, 1) \cup B(z_2, 1)$, we see that it suffices by Proposition \ref{comparison} to show that (\ref{subadd2}) holds for $z \in\overline{B(z_1, 1) \cup B(z_2, 1)}$.
We have by Proposition \ref{comparison} that for all $z \in \mathbb{R}^n$,
\begin{equation} \label{supaddcomparisonthing}
w_\nu(z, \omega) - w_\nu(z_2, \omega) \geq m_\mu(z, z_2, \omega).
\end{equation}
Now we use (\ref{supaddcomparisonthing}) and the fact that $m_\mu(z, z_1) = w_\nu(z) - w_\nu(z_1)$ for all $z \in \overline{B(z_1, 1)}$, to obtain that for all $z \in \overline{B(z_1, 1)}$,
\begin{align*}
m_\mu(z, z_1, \omega) &= w_\nu(z, \omega) - w_\nu(z_1, \omega) \\
&= (w_\nu(z, \omega) - w_\nu(z_2, \omega)) - (w_\nu(z_1, \omega) - w_\nu(z_2, \omega))\\
&\geq m_\mu(z, z_2, \omega) + m_\mu(z_2, z_1, \omega).
\end{align*}
Then, for $z \in \overline{B(z_2, 1)}$, we use (\ref{wandmbound}) to get
\begin{align*}
m_\mu(z, z_1, \omega) &\geq w_\nu(z, \omega) - w_\nu(z_2, \omega) - C + m_\mu(z_2, z_1, \omega) \\
&= m_\mu(z, z_2, \omega) + m_\mu(z_2, z_1, \omega) - C.
\end{align*}
This finishes the proof.
\end{proof}
We also note that $m_\mu$ is Lipschitz continuous with respect to its second parameter. The proof can be found in \cite{ben}.
\begin{lemma} \label{lemma1}
For any $\omega \in \Omega_3$, there exists a uniform (in $z$) constant $C$ such that
\begin{equation} \label{secondvarlipschitz}
|m_\mu(z, z_1, \omega) - m_\mu(z, z_2, \omega)| \leq C|z_1 - z_2|.
\end{equation}
\end{lemma}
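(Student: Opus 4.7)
The plan is to treat two regimes separately: for $|z_1-z_2|\ge 1$ I would combine the superadditivity from Proposition~\ref{superadditivity} with the oscillation bound from Proposition~\ref{metricproblemoscprop}, and for $|z_1-z_2|<1$ I would argue by direct comparison using Proposition~\ref{comparison}.

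For the large-separation regime, applying \eqref{subadd2} to the triple $(z,z_1,z_2)$ and then again with $z_1$ and $z_2$ interchanged yields the two-sided inequality
\begin{equation*}
m_\mu(z_2,z_1,\omega)-C \le m_\mu(z,z_1,\omega)-m_\mu(z,z_2,\omega) \le C-m_\mu(z_1,z_2,\omega),
\end{equation*}
so it suffices to bound $|m_\mu(z_i,z_j,\omega)|$ linearly in $|z_1-z_2|$ for the two ordered pairs. Taking $R:=|z_1-z_2|\ge 1>\tfrac{1}{2}$, the oscillation bound \eqref{metricproblemoscboundfirst} centered at $z_j$, together with $m_\mu(z_j,z_j,\omega)=0$ coming from \eqref{boundarycondition}, gives $|m_\mu(z_i,z_j,\omega)|\le\osc_{B(z_j,R)}m_\mu(\cdot,z_j,\omega)\le CR$. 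The additive $C$ from the superadditivity step is then absorbed into the Lipschitz constant because $|z_1-z_2|\ge 1$, and \eqref{secondvarlipschitz} follows in this regime.

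For the small-separation regime $|z_1-z_2|<1$ the additive $C$ cannot be absorbed, so I would instead exploit the fact that \eqref{metricequation} is invariant under adding a constant to $m_\mu$ and compare $m_\mu(\cdot,z_1,\omega)$ with the shifted function $\widetilde m(z):=m_\mu(z,z_2,\omega)+\bigl(w_\nu(z_2,\omega)-w_\nu(z_1,\omega)\bigr)$, which is itself a solution of \eqref{metricequation} on $\mathbb{R}^n\setminus D_1(z_2)$. On the overlap $D_1(z_1)\cap D_1(z_2)$, which is nonempty for $|z_1-z_2|<1$, the boundary condition \eqref{boundarycondition} forces $\widetilde m\equiv m_\mu(\cdot,z_1,\omega)$ identically. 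The Lipschitz continuity of $w_\nu$ from Proposition~\ref{firstconvergenceprop} and the jump-size bound from Proposition~\ref{metricproblemwellposed} together furnish an $O(|z_1-z_2|)$ boundary mismatch on the symmetric difference $D_1(z_1)\triangle D_1(z_2)$; perturbing $\widetilde m$ by $\pm C|z_1-z_2|$ then produces ordered sub- and supersolutions, and Proposition~\ref{comparison} applied on $\mathbb{R}^n\setminus(D_1(z_1)\cup D_1(z_2))$ propagates the ordering outward to yield $|m_\mu(z,z_1,\omega)-\widetilde m(z)|=O(|z_1-z_2|)$.

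The hard part will be the small-separation argument, since the nonlocal character of \eqref{metricequation} and the jump discontinuity of $m_\mu$ at $\partial D_1$ would ordinarily obstruct a clean comparison. This is precisely why Proposition~\ref{comparison} is stated with an interior ordering hypothesis, and why Proposition~\ref{metricproblemwellposed} records an explicit bound on the jump; together they should allow the comparison to close with a constant that depends only on the Lipschitz constant of $w_\nu$, the oscillation constant in \eqref{metricproblemoscboundfirst}, and the $C$ from \eqref{wandmbound}, and in particular is uniform in $z$, $z_1$, $z_2$, and $\omega$ as required.
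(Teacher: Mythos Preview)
The paper does not supply its own proof of this lemma but defers to an external reference, so there is no in-paper argument to compare against directly.

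Your large-separation argument ($|z_1-z_2|\ge 1$) is correct: combining \eqref{subadd2} with the oscillation bound \eqref{metricproblemoscboundfirst} and $m_\mu(z_j,z_j,\omega)=0$ yields $|m_\mu(z,z_1,\omega)-m_\mu(z,z_2,\omega)|\le C(1+|z_1-z_2|)$, and the additive constant is absorbed when $|z_1-z_2|\ge 1$. For the purposes of this paper that already suffices, since Lemma~\ref{lemma1} is only invoked in the proof of Proposition~\ref{metricproblemhomogthm} after dividing by $t\to\infty$.

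The small-separation argument, however, has a genuine gap. You assert that the Lipschitz continuity of $w_\nu$ together with the jump-size bound of Proposition~\ref{metricproblemwellposed} yields an $O(|z_1-z_2|)$ mismatch on $D_1(z_1)\triangle D_1(z_2)$. This is not so: on $D_1(z_1)\setminus D_1(z_2)$ one has $m_\mu(\cdot,z_1,\omega)=w_\nu(\cdot,\omega)-w_\nu(z_1,\omega)$ while $\widetilde m(\cdot)=m_\mu(\cdot,z_2,\omega)+O(|z_1-z_2|)$, and the quantity $w_\nu(z,\omega)-w_\nu(z_2,\omega)-m_\mu(z,z_2,\omega)$ for $z$ just outside $\partial D_1(z_2)$ is exactly the jump of $m_\mu(\cdot,z_2,\omega)$, which Proposition~\ref{metricproblemwellposed} bounds only by a \emph{fixed} constant independent of $|z_1-z_2|$. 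No small-scale Lipschitz bound on $m_\mu$ in its first variable is available here (the oscillation estimate \eqref{metricproblemoscboundfirst} requires $R>\tfrac12$, and \eqref{wandmbound} is likewise an $O(1)$ bound), so the $\pm C|z_1-z_2|$ perturbation you propose does not produce the required ordering on $\mathrm{int}(D_1(z_1)\cup D_1(z_2))$, and Proposition~\ref{comparison} cannot be invoked as written. Closing this regime requires a different ingredient---for instance, working directly with the Perron representation of $m_\mu$, or first establishing a genuine pointwise Lipschitz bound for $m_\mu$ in its first variable---which is presumably what the cited reference provides.
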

%\begin{proof}
%Recall that $w_\nu$ is Lipschitz continuous. Comparison implies that
%$$
%\sup_{z \in \mathbb{R}^n} m_\mu(z, z_1, \omega) - m_\mu(z, z_2, \omega) = \sup_{z \in \overline{B(z_1, 1) \cup B(z_2, 1)}} m_\mu(z, z_1, \omega) - m_\mu(z, z_2, \omega).
%$$
%If $z \in \overline{B(x_1, 1)}$, then we have upon using (\ref{supaddcomparisonthing})
%\begin{align*}
%m_\mu(z, z_1, \omega) - m_\mu(z, z_2, \omega) &= -m_\mu(z, z_2, \omega) + (w_\nu(z, \omega) - w_\nu(z_1, \omega)) \\
%&\geq w_\nu(z_2, \omega) - w_\nu(z_1, \omega) \\
%&\geq -C|z_1 - z_2|.
%\end{align*}
%If $z \in \overline{B(z_2, 1) \backslash B(z_1, 1)}$, then we have
%\begin{multline*}
%m_\mu(z, z_1, \omega) - m_\mu(z, z_2, \omega) = m_\mu(z, z_1, \omega) - (w_\nu(z, \omega) - w_\nu(z_2, \omega)) \\
%= m_\mu(z, z_1, \omega) - (w_\nu(z, \omega) - w_\nu(z_1, \omega)) + (w_\nu(z_2, \omega) - w_\nu(z_1, \omega))
%\end{multline*}
%We can bound $m_\mu(z, z_1, \omega) - (w_\nu(z, \omega) - w_\nu(z_1, \omega)) \geq -C|z_2 - z_1|$ by using (\ref{supaddcomparisonthing}) again, and since $w_\nu$ is Lipschitz, this gives half of (\ref{secondvarlipschitz}). We can reverse $z_1$ and $z_2$ to obtain the reverse inequality.
%\end{proof}
\subsection{Homogenization of the Metric Problem}
We now consider the boundary value problem
\begin{equation} \label{scaledmetricproblem}
\left\{
\begin{array}{l}
\displaystyle \bar{J}-\int \epsilon^{n} J_\epsilon(y) \exp\left(-y \cdot p + \frac{m_\mu^\epsilon(z - y, \omega) -  m^\epsilon_\mu(z, \omega)}{\epsilon}\right)dy - c\left(\frac{z}{\epsilon}, \omega\right) = \mu \text{ in } \mathbb{R}^n \backslash D_\epsilon,\\
\displaystyle m^\epsilon_\mu(z, \omega) = \epsilon w_\nu\left(\frac{z}{\epsilon}\right) - \epsilon w_\nu(0) \text{ on } D_\epsilon, \\
 \displaystyle \limsup_{|z|\rightarrow \infty} |z|^{-1} m^\epsilon_\mu(z, \omega)\leq 0 \text{ a.s. in } \omega,
\end{array} \right.
\end{equation}
where $J_\epsilon(y) := J(\frac{y}{\epsilon})$ and $D_\epsilon = B(0, \epsilon)$. This problem is a rescaling of (\ref{metricequation})-(\ref{boundarycondition})-(\ref{inftycondition2}), and it is easy to see that a solution of this equation is $m^\epsilon_\mu(z, \omega) = \epsilon m_\mu\left(\frac{z}{\epsilon}, 0, \omega\right).$ We show that as $\epsilon \rightarrow 0$, $m^\epsilon_\mu$ converges almost surely to the unique solution $\overline{m}_\mu(y; p)$ of
\begin{equation} \label{effhamiltonian}
\left\{
\begin{array}{l}
\displaystyle \overline{H}(p + D\overline{m}_\mu) = \mu \text{ in } \mathbb{R}^n \backslash \{0\}, \\
\displaystyle \overline{m}_\mu(0) = 0, \\
\displaystyle \limsup_{|z|\rightarrow \infty} |z|^{-1} \overline{m}_\mu(z) \leq 0.
\end{array} \right.
\end{equation}
(\ref{effhamiltonian}) is well-posed in $C(\mathbb{R}^n)$ for every $\mu < \overline{H}(p)$ by arguments that are similar to the preceding subsections. The non-uniqueness at the boundary discussed by Proposition \ref{metricproblemwellposed} is not present because for \eqref{effhamiltonian} the boundary is just the origin, rather than a ball.
\begin{proposition} \label{metricproblemhomogthm}
There exists a full probability subset $\Omega_4 \subset \Omega_3$ such that for each $p, y \in \mathbb{R}^n$, $\mu < \overline{H}(p)$, and $\omega \in \Omega_4$,
\begin{equation} \label{metricproblemconvergence}
\lim_{t \rightarrow \infty} \frac{1}{t} m_\mu(tz, 0, \omega; p) = \overline{m}_\mu(z; p).
\end{equation}
\end{proposition}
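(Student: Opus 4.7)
The plan is to apply the Akcoglu-Krengel subadditive ergodic theorem to a one-parameter process built from $\hat{m}_\mu$ to extract an almost sure limit along each rational direction, extend by Lipschitz continuity to all of $\mathbb{R}^n$ and a common full-probability set, and then identify the limit with the unique solution of \eqref{effhamiltonian} via the perturbed test function method.

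For fixed $p, z \in \mathbb{Q}^n$ with $z \neq 0$, I would define the continuous subadditive process $Q([a, b))(\omega) := -\hat{m}_\mu(bz, az, \omega)$ relative to the semigroup $\sigma_t := \tau_{tz}$. The cocycle condition $Q([a, b))(\sigma_t \omega) = Q([a+t, b+t))(\omega)$ is exactly Proposition \ref{jointlystationary}; the linear integrability $\mathbb{E}[|Q([a,b))|] \leq C|b-a|$ follows from \eqref{metricproblemoscboundfirst} applied at scale $|b-a||z|$; and subadditivity on disjoint unions is the superadditivity of $\hat{m}_\mu$ established in Proposition \ref{superadditivity}. The subadditive ergodic theorem then yields an a.s.\ limit $\overline{m}_\mu(z;p,\omega) := \lim_{t \to \infty} \frac{1}{t} m_\mu(tz, 0, \omega)$. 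To see this limit is deterministic, for arbitrary $y \in \mathbb{R}^n$ joint stationarity gives $m_\mu(tz, 0, \tau_y \omega) = m_\mu(tz+y, y, \omega)$, which by Lemma \ref{lemma1} and \eqref{metricproblemoscboundfirst} differs from $m_\mu(tz, 0, \omega)$ by at most $C(1 + |y|)$, so dividing by $t$ shows $\overline{m}_\mu(z;p,\cdot)$ is $\tau_y$-invariant for every $y \in \mathbb{R}^n$ and hence constant by ergodicity. Applying Lemma \ref{lemma1} at scale $t$ gives $|t^{-1}m_\mu(tz_1, 0, \omega) - t^{-1}m_\mu(tz_2, 0, \omega)| \leq C|z_1 - z_2|$ uniformly in $t$, so via a diagonalization over rational $p, z$ the convergence extends to all $p, z \in \mathbb{R}^n$ on a common full-probability set $\Omega_4$, with $\overline{m}_\mu(\cdot; p)$ Lipschitz. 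The conditions $\overline{m}_\mu(0;p) = 0$ and $\limsup_{|z|\to\infty} |z|^{-1} \overline{m}_\mu(z;p) \leq 0$ are inherited from the corresponding properties of $m_\mu$ (the latter via Lemma \ref{metricfunctionlemma}).

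The main obstacle is identifying $\overline{m}_\mu$ as the unique solution of \eqref{effhamiltonian}, that is, verifying $\overline{H}(p + D\overline{m}_\mu) = \mu$ in the viscosity sense on $\mathbb{R}^n \setminus \{0\}$. For the inequality $\overline{H}(p + D\overline{m}_\mu) \leq \mu$, I would use a perturbed test function argument: if $\phi$ is smooth and touches $\overline{m}_\mu(\cdot; p)$ strictly from above at $z_0 \neq 0$ with $q := D\phi(z_0)$, the approximate supercorrector $w_\nu(\cdot; p+q, \omega)$ from Proposition \ref{firstconvergenceprop} together with the convexity of the exponential and \eqref{supercorrector} makes $\phi^\epsilon(z) := \phi(z) + \epsilon w_\nu(z/\epsilon; p+q, \omega)$ an asymptotic supersolution of the scaled metric equation \eqref{scaledmetricproblem} with right hand side close to $\overline{H}(p + q) - \nu$; assuming $\overline{H}(p + q) > \mu$, taking $\nu$ small and applying Proposition \ref{comparison} in a small ball around $z_0$ forces $\phi^\epsilon \geq m^\epsilon_\mu$ there, contradicting strict touching upon sending $\epsilon \to 0$ and using the already established uniform convergence of $m^\epsilon_\mu$ to $\overline{m}_\mu$ on $\Omega_4$. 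The reverse inequality is the delicate point because no approximate subcorrector is available in the stationary ergodic setting; it must be obtained indirectly, by exploiting the superadditive structure of $\hat{m}_\mu$ together with the $\limsup$ characterization \eqref{fulllimsup} and the coercivity of $\overline{H}$ from Proposition \ref{Hbarproperties} to argue that if the effective equation held with strict inequality at a test point from below, then \eqref{subadd} and the oscillation bound \eqref{metricproblemoscboundfirst} would force growth of $m_\mu$ inconsistent with Proposition \ref{firstconvergenceprop}. Uniqueness of solutions of \eqref{effhamiltonian} in the Lipschitz class satisfying the infinity condition then identifies the limit as $\overline{m}_\mu(\cdot; p)$, completing the proof.
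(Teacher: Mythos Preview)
Your application of the subadditive ergodic theorem, extension to irrational directions, and verification that the limit is deterministic are essentially as in the paper (though your citation of Lemma~\ref{lemma1} for the first-variable Lipschitz estimate $|t^{-1}m_\mu(tz_1,0,\omega)-t^{-1}m_\mu(tz_2,0,\omega)|\leq C|z_1-z_2|$ is wrong: Lemma~\ref{lemma1} controls the \emph{second} variable; what you need there is \eqref{metricproblemoscboundfirst}).

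The identification step has a genuine gap in the supersolution direction. For the subsolution direction (touching from above, wanting $\overline{H}(p+q)\leq\mu$), your use of the approximate supercorrector $w_\nu$ to build $\phi^\epsilon=\phi+\epsilon w_\nu(\cdot/\epsilon;p+q)$ is a legitimate alternative to the paper's construction: since $w_\nu$ is Lipschitz and \eqref{supercorrector} holds pointwise, the assumption $\overline{H}(p+q)>\mu$ does make $\phi^\epsilon$ a strict supersolution of \eqref{scaledmetricproblem} in a small ball, and Lemma~\ref{comparisonlemma} yields the contradiction. But for the supersolution direction (touching from below, wanting $\overline{H}(p+q)\geq\mu$) you have no object satisfying the reverse inequality of \eqref{supercorrector}, and your appeal to ``superadditive structure'' and ``growth inconsistent with Proposition~\ref{firstconvergenceprop}'' is not an argument: superadditivity bounds $\overline{m}_\mu$ from below globally, not the effective Hamiltonian from below at a single test point, and nothing in Proposition~\ref{firstconvergenceprop} bears on local behavior of $\overline{m}_\mu$ near~$z_0$.

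The paper resolves both directions with a single device: it perturbs by $\lambda_j v^{\lambda_j}(\cdot/\lambda_j,\omega;p+q)$ along the subsequence $\lambda_j$ of Remark~\ref{convsubsequence}, on which $\lambda_j v^{\lambda_j}\to -\overline{H}(p+q)$ locally uniformly almost surely. Because $v^{\lambda_j}$ is an \emph{exact} solution of \eqref{approxcellproblemomega}, the perturbed test function is a strict supersolution of \eqref{scaledmetricproblem} when $\overline{H}(p+q)>\mu$ and a strict subsolution when $\overline{H}(p+q)<\mu$, so the same comparison argument handles both cases symmetrically. Note that only this \emph{subsequential} convergence is available at this point in the paper; the full almost-sure convergence \eqref{upgradedconvergence} is not established until Section~\ref{sec:homogproof} and in fact depends on the present proposition, so you cannot invoke it here.
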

\begin{proof}
We know by comparison that $m_\mu$ and $\hat{m}_\mu$ differ by no more than a constant. Therefore, because $\hat{m}_\mu$ is superadditive, we can apply the subadditive ergodic theorem using the semigroup $\sigma_t := \tau_{tz}$ to say that for each $z \in \mathbb{R}^d$, as $t \rightarrow \infty$, $t^{-1}m_\mu(tz, 0, \omega) = t^{-1} \hat{m}_\mu(tz, 0, \omega) \rightarrow M_\mu(z, \omega),$ for some $M_\mu(z, \omega)$ and $\omega \in \Omega_z$, $\Omega_z$ a set of full probability that depends on $z$. Define $\Omega_4 = \bigcup_{z \in \mathbb{Q}^n} (\Omega_z \cap \Omega_3).$ Then $\Omega_4$ is also a set of full probability, and we have that for $z \in \mathbb{Q}^n, \omega \in \Omega_4$,
\begin{equation} \label{convergenceonrationals}
t^{-1}m_\mu(tz, 0, \omega) \rightarrow M_\mu(z, \omega).
\end{equation}
We would like to show that \eqref{convergenceonrationals} holds for all $z \in \mathbb{R}^n$. Take a sequence $z_i \in \mathbb{Q}^n$ such that $z_i \rightarrow z$. Using (\ref{metricproblemoscbound}), we can conclude that for $\omega \in \Omega_4$, as $t \rightarrow \infty$,
\begin{equation} \label{lipschitzresult}
t^{-1} (m_\mu(tz, 0, \omega) - m_\mu(tz_i, 0, \omega)) \leq C|z - z_i|.
\end{equation}
Now applying (\ref{lipschitzresult}) with $z_j$ in place of $z$, we have that $t^{-1} m_\mu(tz_i, 0, \omega)$ is a Cauchy sequence, which means that $M_\mu(\cdot, \omega)$ can be defined on $\mathbb{R}^n$ as a continuous function.
This means that for $\omega \in \Omega_4$ and any $z \in \mathbb{R}^n$, $\lim_{t \rightarrow \infty} t^{-1} m_\mu(tz, 0, \omega) = \lim_{i \rightarrow \infty} M_\mu(z_i, \omega) = M_\mu(z, \omega).$ It remains to show that $M_\mu(z, \omega) = \overline{m}_\mu(z)$. First we check that $M_\mu(z, \omega) = M_\mu(z)$ a.s. in $\omega$. To do this it suffices to show that for every $\omega \in \Omega_4$ and $z_1, z \in \mathbb{R}^n$, $M_\mu(z, \tau_{z_1} \omega) = M_\mu(z, \omega)$. We can deduce
\begin{align*}
M_\mu(z, \tau_{z_1} \omega)
%&= \lim_{t \rightarrow \infty} t^{-1} m_\mu(tz, 0, \tau_{z_1} \omega) \\
&= \lim_{t \rightarrow \infty} t^{-1} m_\mu(tz+z_1, z_1, \omega) \\
&= \lim_{t \rightarrow \infty} t^{-1} (m_\mu(tz+z_1, 0, \omega) + C|z_1|) \\
&= \lim_{t \rightarrow \infty} t^{-1} (m_\mu(tz, 0, \omega) + 2C|z_1|) \\
&= \lim_{t \rightarrow \infty} t^{-1} m_\mu(tz, 0, \omega) = M_\mu(z, \omega),
\end{align*}
where we have used Proposition \ref{jointlystationary}, (\ref{metricproblemoscbound}), Lemma \ref{lemma1}, and the ergodic hypothesis. Some other properties of $M_\mu$ follow readily. It is clear that $M_\mu$ is positively 1-homogeneous, and then (\ref{boundarycondition}) implies that $M_\mu$ is nonpositive. Applying (\ref{finaloscbound}) and (\ref{metricproblemoscbound}) shows that $M_\mu(z)$ is Lipschitz continuous in $z$.

To finish the proof, we show that $M_\mu$ is the solution of (\ref{effhamiltonian}) via a perturbed test function method. Suppose that $\varphi$ is a smooth function, $z_0 \neq 0$, and $z \mapsto M_\mu(z) - \varphi(z)$ has a strict global maximum at $z = z_0$. We would like to show that $\overline{H}(p+ D\varphi(z_0)) \leq \mu.$ Arguing by contradiction, we assume that $\theta:= \overline{H}(p+ D\varphi(z_0)) - \mu > 0.$ Let $\lambda_j$ be the subsequence from Remark \ref{convsubsequence} that satisfies (\ref{convsubsequence1}). Set $p_1 := p + D\varphi(z_0)$, and define the perturbed test function $\phi_j(z) := \varphi(z) + \left(\lambda_j v^{\lambda_j}\left(\frac{z}{\lambda_j}, \omega; p_1\right) + \overline{H}(p_1)\right).$ We claim that for sufficiently large $j$ and sufficiently small $r > 0$, $\phi_j$ satisfies
\begin{equation} \label{whatwewant}
\bar{J}-\int \lambda_j^{n} J_{\lambda_j}(y) \exp(-y\cdot p) \exp(\lambda_j^{-1}(\varphi_j(z - y, \omega) - \varphi_j(z, \omega)))dy - c\left(\frac{z}{\lambda_j}\right) \geq \mu+\frac{\theta}{2}
\end{equation}
in the viscosity sense for $z \in B(z_0, r)$. To prove this claim, select a smooth function $\psi$ and a point $z_1 \in B(z_0, r)$ at which $\varphi_j - \psi$ has a global minimum. It then follows that $z \mapsto v^{\lambda_j}(z, \omega; p_1) - \lambda_j^{-1}(\psi(\lambda_j z) - \varphi(\lambda_j z))$ has a global minimum at $z = \frac{z_1}{\lambda_j}$. Since $v^\lambda(\cdot, p_1)$ solves (\ref{approxcellproblemomega}) with $p_1$, we have that
\begin{multline} \label{ptfthing}
\lambda_j v^{\lambda_j}\left(\frac{z_1}{\lambda_j}, \omega; p_1\right) + \bar{J} - \int J(y) \exp(-y\cdot p_1) \\ \exp\left(\frac{\psi(z_1-\lambda_j y) - \psi(z_1)}{\lambda_j} + \frac{\varphi(z_1-\lambda_j y) - \varphi(z_1)}{\lambda_j}\right) dy - c\left(\frac{z_1}{\lambda_j}\right) \geq 0.
\end{multline}
We know that for $\lambda_j$ sufficiently small, $-\lambda_j v^{\lambda_j}\left(\frac{z_1}{\lambda_j}, \omega; p_1\right) \geq \overline{H}(p_1) - \frac{\theta}{8},$ so we can
%$\displaystyle \frac{\phi(x_1-\lambda_j y) - \phi(x_1)}{\lambda_j} \rightarrow -D\phi(x_1)$ as $\lambda_j \rightarrow 0$
make the change of variables $y \mapsto \frac{y}{\lambda_j}$ in (\ref{ptfthing}) to obtain (\ref{whatwewant}) for $\lambda_j$ and $r$ sufficiently small. Comparison (Lemma \ref{comparisonlemma}) with (\ref{whatwewant}) and (\ref{scaledmetricproblem}) now implies that
$$
\sup_{B(z_0, r)} [m_\mu^{\lambda_j}(\cdot) - \varphi_j(\cdot)] \leq \sup_{|z- z_0| > r} [ m_\mu^{\lambda_j}(z) - \varphi_j(z)]
$$
Sending $j \rightarrow \infty$, we obtain a contradiction to the assumption that $M_\mu(z) - \varphi(z)$ has a strict global maximum at $z_0$. The proof that $M_\mu$ is a supersolution of (\ref{effhamiltonian}) is similar, and thus we can conclude that $M_\mu$ is a solution of (\ref{effhamiltonian}). The well-posedness of (\ref{effhamiltonian}) then implies that $M_\mu = \overline{m}_\mu$.
%By comparison we can say that $m^{\lambda_j}_\mu - \phi_j$ cannot have a local maximum in $B(x_0, r)$. Sending $j \rightarrow \infty$, this is a contradiction to the assumption that $M_\mu(x) - \phi(x)$ has a strict local maximum at $x_0$, and this completes the proof that $M_\mu$ is a subsolution of (\ref{effhamiltonian}).
\end{proof}
To conclude this section, we give a characterization of $\overline{m}_\mu$ that will be used in the homogenization proof of Section \ref{sec:homogproof}.
\begin{lemma}
For each $\mu < \overline{H}(p)$,
\begin{equation} \label{randomlemma2}
\overline{m}_\mu(z; p) = \inf_{q \in \mathbb{R}^n} \{z \cdot q: \overline{H}(p+q) \geq \mu\}.
\end{equation}
\end{lemma}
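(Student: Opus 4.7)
The plan is to introduce $A_\mu := \{q \in \mathbb{R}^n : \overline{H}(p+q) \geq \mu\}$, define the candidate
$$
\ell(z) := \inf_{q \in A_\mu} z \cdot q,
$$
and verify that $\ell$ solves the boundary value problem \eqref{effhamiltonian}; the well-posedness of that problem noted immediately before Proposition \ref{metricproblemhomogthm} will then force $\ell = \overline{m}_\mu$, which is precisely \eqref{randomlemma2}. Before touching the PDE I would record the geometry of $A_\mu$: concavity of $\overline{H}$ (part 1 of Proposition \ref{Hbarproperties}) makes $A_\mu$ convex, continuity makes it closed, and the negative coercivity in \eqref{hbarcoercive} makes it bounded. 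Since $\mu < \overline{H}(p)$, the origin lies in the interior of $A_\mu$.

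From this I would extract the elementary properties of $\ell$. As an infimum of linear functions, $\ell$ is concave and positively $1$-homogeneous; boundedness of $A_\mu$ prevents $\ell$ from taking the value $-\infty$; and $0 \in A_\mu$ forces $\ell \leq 0$ everywhere, so $\ell(0) = 0$ and $\limsup_{|z|\to\infty} |z|^{-1}\ell(z) \leq 0$, matching the boundary and growth conditions in \eqref{effhamiltonian}. The crux is then to verify that $\ell$ is a viscosity solution of $\overline{H}(p+D\ell) = \mu$ on $\mathbb{R}^n \setminus \{0\}$. The key geometric observation is that for each $z_0 \neq 0$, since $A_\mu$ is compact and $0$ lies in its interior, any minimizer of $q \mapsto z_0 \cdot q$ on $A_\mu$ lies on $\partial A_\mu$, where $\overline{H}(p+q) = \mu$. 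For the subsolution inequality, if $\varphi \in C^1$ and $\ell - \varphi$ has a local maximum at $z_0$, then $D\varphi(z_0)$ lies in the superdifferential of the concave function $\ell$ at $z_0$; standard convex duality identifies this superdifferential with the (nonempty) set of minimizers of $q \mapsto z_0 \cdot q$ on $A_\mu$, all of which lie on $\partial A_\mu$, so $\overline{H}(p + D\varphi(z_0)) = \mu \leq \mu$. For the supersolution inequality, concavity of $\ell$ forces the classical subdifferential to be empty outside the set of points where $\ell$ is differentiable, so it suffices to check the inequality at those points, where $D\ell(z_0)$ equals the unique minimizer and therefore lies on $\partial A_\mu$, giving $\overline{H}(p + D\ell(z_0)) = \mu \geq \mu$.

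With $\ell$ solving \eqref{effhamiltonian} and satisfying $\ell(0) = 0$ together with the sublinearity condition, uniqueness for \eqref{effhamiltonian} yields $\ell \equiv \overline{m}_\mu$, which is the claim. The main obstacle is the viscosity verification in the third step, because the Hamiltonian is concave and the candidate is concave, so one must be careful about which touching directions are allowed; the compactness of $A_\mu$ (ensuring minimizers lie on $\partial A_\mu$) together with the fact that concave functions admit no smooth subtangent at non-differentiable points makes the argument clean. A minor technical point will be the routine check that uniqueness for \eqref{effhamiltonian} on the singleton boundary $\{0\}$ can indeed be read off from the comparison machinery developed in Section \ref{sec:metricproblem}.
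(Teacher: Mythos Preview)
Your argument is correct, but the route differs from the paper's. Both proofs begin the same way: the right-hand side $\ell(z)=\inf_{q\in A_\mu} z\cdot q$ is a supersolution of \eqref{effhamiltonian}, which by comparison gives $\overline{m}_\mu\le \ell$. The divergence is in the reverse inequality. You finish by showing that $\ell$ is also a \emph{subsolution} of \eqref{effhamiltonian}, using only convex analysis on the explicit formula (identifying the superdifferential of $\ell$ at $z_0\neq 0$ with the set of minimizers of $q\mapsto z_0\cdot q$ on the compact convex body $A_\mu$, all of which lie on $\partial A_\mu=\{\overline H(p+\cdot)=\mu\}$), and then invoke uniqueness for \eqref{effhamiltonian}. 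The paper instead never proves that $\ell$ is a subsolution; it works on the other object $\overline{m}_\mu$, proving that $\overline{m}_\mu$ is positively $1$-homogeneous and \emph{concave} (the latter coming from the superadditivity of $\hat m_\mu$ in Proposition~\ref{superadditivity} together with an ergodic/Egorov argument), and then derives a contradiction at a point of differentiability via $\overline{m}_\mu(z)=z\cdot D\overline{m}_\mu(z)$ and the equation $\overline H(p+D\overline m_\mu)=\mu$.

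What each buys: your approach is entirely macroscopic and self-contained---it needs nothing from the microscopic metric problem beyond the well-posedness of \eqref{effhamiltonian}, and in particular avoids the superadditivity/ergodic machinery. The paper's approach, by contrast, establishes concavity of $\overline{m}_\mu$ as a byproduct, a structural fact that is independently useful. One small remark on your write-up: in the supersolution step you rely on the (true) fact that a concave function admits no smooth subtangent at points of non-differentiability; it is worth stating this as the standard lemma rather than leaving it implicit, since it is the only place where the concave/convex orientation of both $\overline H$ and $\ell$ must be tracked carefully.
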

\begin{proof}
Since the right hand side of (\ref{randomlemma2}) is a supersolution of (\ref{effhamiltonian}), the ``$\leq$'' inequality in (\ref{randomlemma2}) follows from Proposition \ref{comparison}.

It remains to prove the ``$\geq$'' inequality. First note that $\overline{m}_\mu$ is positively 1-homogeneous in $z$, i.e. for every $t \geq 0$, $\overline{m}_\mu(tz; p) = t\overline{m}_\mu(z; p).$ This follows from the fact that \eqref{effhamiltonian} is scale-invariant and that $\overline{m}_\mu$ is unique.
We next show that $\overline{m}_\mu(z; p)$ is a concave function of $z$. This is done by using the superadditivity property \eqref{superadditivity}. We have for any $z_1 \in \mathbb{R}^n$,
\begin{align*}
\overline{m}_\mu(z; p) &= \lim_{t \rightarrow \infty} t^{-1} \hat{m}_\mu(tz, 0, \omega) \\
&\geq \lim_{t \rightarrow \infty} t^{-1}(\hat{m}_\mu(tz_1, 0, \omega) + \hat{m}_\mu(tz, tz_1, \omega)) \\
&= \overline{m}_\mu(z_1;p) + \overline{m}_\mu(z - z_1; p),
\end{align*}
where the last equality can be proven
%$$
%\lim_{t \rightarrow \infty} t^{-1} \hat{m}_\mu(tz, tz_1, \omega) \rightarrow \overline{m}_\mu(z - z_1; p),
%$$
%which holds for the same reason as in \cite{takispaper2};
from \eqref{metricproblemconvergence} by using the ergodic theorem and Egorov's Theorem (see \cite{takispaper2}). This implies that $\overline{m}_\mu$ is a concave function. If (\ref{randomlemma2}) fails, then we can find a point $z \neq 0$ such that $\overline{m}_\mu(z; p)$ is differentiable at $z$ and $\overline{m}_\mu(z; p) < \inf\{z \cdot q: \overline{H}(p+q)\geq\mu\},$ because a concave function is differentiable almost everywhere. Since $\overline{m}_\mu$ is 1-homogeneous, $\overline{m}_\mu(z; p) = z \cdot D\overline{m}_\mu(z; p)$, so it follows that $\overline{H}(p+D\overline{m}_\mu(z; p)) < \mu.$ This contradicts the elementary viscosity-theoretic fact that $\overline{m}_\mu$ must satisfy $\overline{H}(p+D\overline{m}_\mu) = \mu$ at any point of differentiability.
\end{proof}
We will next strengthen the lemma and show that the optimal value of $q$ in \eqref{randomlemma2} must lie on the boundary of $\{q: \overline{H}(p+q) \geq \mu\}$.
\begin{lemma} \label{randomlemma}
Fix $p \in \mathbb{R}^n$ and $\mu < \overline{H}(p)$. We have that
\begin{equation} \label{randomlemma3}
\overline{m}_\mu(z; p) = \inf_{q \in \mathbb{R}^n} \{z \cdot q: \overline{H}(p+q) = \mu\}.
\end{equation}
In addition, suppose that $q_0 \in \mathbb{R}^n$ satisfies $\overline{H}(p + q_0) = \mu$, then there exists $z_0 \neq 0$ such that $\overline{m}_\mu(z_0) = z_0 \cdot q_0$.
\end{lemma}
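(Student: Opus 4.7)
The plan is to exploit the concavity and negative coercivity of $\overline{H}$ established in Proposition \ref{Hbarproperties}: the superlevel set
$$
S := \{q \in \mathbb{R}^n : \overline{H}(p + q) \geq \mu\}
$$
is convex, compact, and, since $\mu < \overline{H}(p)$, contains the origin in its interior. By continuity of $\overline{H}$, the topological boundary satisfies $\partial S \subseteq \{q : \overline{H}(p + q) = \mu\}$. With this geometric picture in hand, both assertions reduce to standard facts about minimizing a linear functional over a convex body, combined with the representation \eqref{randomlemma2} from the previous lemma.

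For the identity \eqref{randomlemma3}, I would first rewrite \eqref{randomlemma2} as $\overline{m}_\mu(z;p) = \inf_{q \in S} z \cdot q$. The inequality ``$\leq$'' is immediate from the inclusion $\{q : \overline{H}(p+q) = \mu\} \subseteq S$. For the reverse direction, when $z = 0$ both sides vanish (the level set is nonempty by the intermediate value theorem along any ray on which $\overline{H}$ eventually drops below $\mu$, which exists by negative coercivity). When $z \neq 0$, the linear functional $q \mapsto z \cdot q$ attains its infimum on the convex compact set $S$ at some $q^* \in \partial S$, because perturbing any putative interior minimizer in the $-z$ direction would keep $q$ in $S$ while strictly decreasing the value; then $\overline{H}(p + q^*) = \mu$ yields $\inf\{z \cdot q : \overline{H}(p + q) = \mu\} \leq z \cdot q^* = \overline{m}_\mu(z;p)$.

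For the ``in addition'' statement, the key step is to verify that any $q_0$ with $\overline{H}(p + q_0) = \mu$ automatically lies on $\partial S$, ruling out the a priori possibility of thick level sets containing interior points of $S$. To handle this I would consider the concave one-variable function $g(t) := \overline{H}(p + t q_0)$, which satisfies $g(0) = \overline{H}(p) > \mu$ and $g(1) = \mu$. Concavity then forces
$$
g(t) \leq \overline{H}(p) + t(\mu - \overline{H}(p)) < \mu \quad \text{for all } t > 1,
$$
so $t q_0 \notin S$ for $t > 1$, confirming $q_0 \in \partial S$. Applying the supporting hyperplane theorem to the convex body $S$ at this boundary point then yields a nonzero $z_0 \in \mathbb{R}^n$ with $z_0 \cdot q \geq z_0 \cdot q_0$ for every $q \in S$, and combining with \eqref{randomlemma3} gives $\overline{m}_\mu(z_0) = \inf_{q \in S} z_0 \cdot q = z_0 \cdot q_0$.

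The whole argument is essentially pure finite-dimensional convex analysis, so no serious technical obstacle arises; the only delicate point is the line argument ruling out $q_0 \in \mathrm{int}(S)$, which is precisely where the strict inequality $\mu < \overline{H}(p)$ enters.
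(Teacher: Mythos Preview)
Your proof is correct. For \eqref{randomlemma3} your argument coincides with the paper's: both observe that the linear functional $q \mapsto z\cdot q$ cannot attain its infimum over the compact convex superlevel set $S$ at an interior point, since sliding in the $-z$ direction remains in $S$ while strictly decreasing the value.

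For the second assertion the two proofs diverge. You first establish $q_0 \in \partial S$ via the one-variable concave function $g(t)=\overline{H}(p+tq_0)$ and then invoke the supporting hyperplane theorem as a black box to produce $z_0$. The paper instead builds the supporting direction by hand: it slides a one-parameter family of half-spaces $S_\gamma$ with a fixed normal until they are just tangent to $S$, and takes for $z_0$ (denoted $x_0$ there) a point in the touching set; this is essentially a direct construction of the supporting hyperplane in this particular situation. Your route is shorter and makes the role of the hypothesis $\mu<\overline{H}(p)$ transparent---it is exactly what forces $g(0)>g(1)$ and hence rules out $q_0\in\mathrm{int}(S)$, a point the paper leaves implicit. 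The paper's argument avoids citing an external theorem but its presentation mixes the roles of the $z$- and $q$-variables and is harder to follow.
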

\begin{proof}
To show \eqref{randomlemma3}, note that it follows from \eqref{randomlemma2}, and the fact that if the optimal $q$ were in the interior of $\{q:\overline{H}(p+q) \geq \mu\}$, then there would exist a ball $B(q, \epsilon) \subset \{\overline{H}(p+q) \geq \mu\}$, and then we could take $q-\epsilon y$, and $y\cdot (q + \epsilon y) = y \cdot q - \epsilon |y|^2$, so $q$ is not the optimal value, a contradiction.

Now, let $q_0$ satisfy $H(p + q_0) = \mu$. Define $S_\gamma = \{z \in \mathbb{R}^d: z\cdot q_0 \leq \gamma\},$ and define
$$
\overline{\gamma} := \inf\{\gamma \geq 0: S_\gamma \cap \{\overline{H}(p + \cdot) \geq \mu\} = \emptyset\}.
$$
We note that $\gamma > 0$, because 0 is in the interior of the superlevel set $\{\overline{H}(p + \cdot) \geq \mu\}$. In addition, $S_{\overline{\gamma}} \cap \{\overline{H}(p + \cdot) \geq \mu\} \neq \emptyset.$ This is true because if this intersection were empty, then the distance between $S_{\overline{\gamma}}$ and $\{\overline{H}(p + \cdot) \geq \mu\}$ is positive, due to the fact that $\{\overline{H}(p + \cdot) \geq \mu\}$ is a compact set because $\overline{H}$ is negatively coercive. In this case, $\overline{\gamma}$ would not be optimal because then the plane can be adjusted further so that $S_{\overline{\gamma} + \epsilon} \cap \{\overline{H}(p + \cdot) \geq \mu\} = \emptyset$ for small $\epsilon$.

Choose $x_0 \in S_{\overline{\gamma}} \cap \{\overline{H}(p + \cdot) \geq \mu\}$. We show that for each $q \in \{\overline{H}(p + \cdot) \geq \mu\}$, $q \cdot x_0 \geq \overline{\gamma}$. This is true because for all $\gamma > \overline{\gamma}$, $q \cdot x_0 > \gamma$ due to the fact that $S_{\gamma} \cap \{\overline{H}(p + \cdot) \geq \mu\} = \emptyset$, which allows us to take $\gamma \rightarrow \overline{\gamma}$ to conclude that $q \cdot x_0 \geq \overline{\gamma}$, and so by \eqref{randomlemma2}, we know that $\overline{m}_\mu(x_0; p) \geq \overline{\gamma}.$ However, we have by definition that $x_0 \cdot q_0 \leq \overline{\gamma}$, which allows us to conclude that $q_0 \cdot x_0 = \overline{\gamma}$, and \eqref{randomlemma2} allows us to conclude that $\overline{m}_\mu(x_0; p) = \overline{\gamma} = x_0 \cdot q_0,$ which finishes the proof.
\end{proof}
\section{Proof of Homogenization and Theorem \ref{maintheorem}}
\label{sec:homogproof}
In this section we will finish the proof of our main result, Theorem \ref{maintheorem}. The first subsection covers the key step which is to improve the result of Proposition \ref{firstconvergenceprop} and show that $\lambda v^\lambda$ converges uniformly to $\overline{H}(p)$ on balls of radius $\sim \lambda^{-1}$ almost surely, i.e.
\begin{equation} \label{upgradedconvergence}
\lim_{\lambda \rightarrow 0} \sup_{B_{\frac{R}{\lambda}}} | \lambda v^\lambda(\cdot, \omega; p) + \overline{H}(p)| = 0.
\end{equation}
In the second subsection, \eqref{upgradedconvergence} will be used to apply a perturbed function method to show that the $\phi^\epsilon$ from Section \ref{sec:maintheorem}, which satisfy the equation
\begin{equation} \label{phiequation}
\left\{
\begin{array}{ll}
\displaystyle \phi^\epsilon_t + \bar{J} - \int J(y) \exp\left(\frac{\phi^\epsilon(x - \epsilon y) - \phi^\epsilon(x)}{\epsilon}\right) dy - \frac{f\left(\frac{x}{\epsilon}, \exp(\epsilon^{-1} \phi^\epsilon)\right)}{\exp(\epsilon^{-1}\phi^\epsilon)} = 0 & \text{ on } \mathbb{R}^n \times (0, \infty) \\
\displaystyle \phi^\epsilon = \epsilon \log(u_0) & \text{ on } \mathrm{int}(G_0) \times \{0\} \\
\displaystyle \phi^\epsilon(x, t) \rightarrow -\infty \text{ as } t \rightarrow 0 & \text{ for } x \in \mathbb{R}^n \backslash \overline{G_0},
\end{array} \right.
\end{equation}
converge locally uniformly to $\phi$, the solution of the homogenized equation \eqref{effectiveequation}. In other words, we prove the homogenization of \eqref{phiequation}, and this result implies Theorem \ref{maintheorem}.
\subsection{Almost Sure Convergence of $v^\lambda$}
We first need to use the methods of Section \ref{sec:metricproblem} to develop a characterization of the maximum of $p \mapsto \overline{H}(p)$, which is guaranteed to be attained by \eqref{hbarcoercive}. This characterization involves the solvability of the equation
\begin{equation} \label{Hcharacterization}
\bar{J}-\int J(y) \exp(-y \cdot p + v(z-y, \omega) - v(z, \omega))dy - c(z, \omega) \geq \mu.
\end{equation}
over the space $\mathrm{Lip}(\mathbb{R}^n)$, which denotes Lipschitz continuous functions on $\mathbb{R}^n$. This observation will play an important role in the proof of Theorem \ref{asconvergence}.
\begin{lemma} \label{lemma2}
Suppose that $p \in \mathbb{R}^n$ satisfies $\displaystyle \overline{H}(p) = \max_{q \in \mathbb{R}^n} \overline{H}(q).$ Then there exists a set of full probability $\Omega_5 \subset \Omega_4$ such that the following formula holds for $\omega \in \Omega_5$:
\begin{equation} \label{lemma1equation}
\overline{H}(p) = \sup\{\mu: \text{ there exists } v \in \mathrm{Lip}(\mathbb{R}^n) \text{ satisfying } (\ref{Hcharacterization})\},
\end{equation}
\end{lemma}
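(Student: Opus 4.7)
The plan is to prove \eqref{lemma1equation} by establishing the two inequalities separately, and I will denote by $\mu^*$ the supremum on the right-hand side. First I would prove the lower bound $\mu^* \geq \overline{H}(p)$, which does not require the maximizer hypothesis. The approximate supercorrector $w_\nu$ furnished by Proposition \ref{firstconvergenceprop} lies in $C^{0,1}(\mathbb{R}^n) \subset \mathrm{Lip}(\mathbb{R}^n)$ on the full-probability set $\Omega_1$ and satisfies \eqref{supercorrector}, which is precisely \eqref{Hcharacterization} evaluated at $\mu = \overline{H}(p) - \nu$. Hence $w_\nu$ is admissible in the supremum and $\mu^* \geq \overline{H}(p) - \nu$ for every $\nu > 0$; letting $\nu \to 0$ gives $\mu^* \geq \overline{H}(p)$.

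For the reverse inequality $\mu^* \leq \overline{H}(p)$, suppose $v \in \mathrm{Lip}(\mathbb{R}^n)$ satisfies \eqref{Hcharacterization} with some $\mu$; after subtracting $v(0)$ I may assume $v(0) = 0$. I would rescale by setting $V^\epsilon(z) := \epsilon v(z/\epsilon)$, which is Lipschitz with the same constant $L$ as $v$ and satisfies $|V^\epsilon(z)| \leq L|z|$. By Arzelà--Ascoli, along a subsequence $\epsilon_k \to 0$ the functions $V^{\epsilon_k}$ converge locally uniformly to a Lipschitz limit $V$. The identity $(V^\epsilon(z - \epsilon y) - V^\epsilon(z))/\epsilon = v(z/\epsilon - y) - v(z/\epsilon)$ rewrites \eqref{Hcharacterization}, applied at $z/\epsilon$, as
\[
\bar{J} - \int J(y) \exp\Bigl(-y \cdot p + \tfrac{V^\epsilon(z - \epsilon y) - V^\epsilon(z)}{\epsilon}\Bigr)\,dy - c\bigl(\tfrac{z}{\epsilon}, \omega\bigr) \geq \mu,
\]
so each $V^\epsilon$ is a classical supersolution of the $\epsilon$-rescaled problem analogous to \eqref{scaledmetricproblem}. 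I would then mirror the perturbed test function argument of Proposition \ref{metricproblemhomogthm} on the supersolution side: given smooth $\phi$ such that $V - \phi$ has a strict local minimum at $z_0$, assume for contradiction that $\theta := \mu - \overline{H}(p + D\phi(z_0)) > 0$, set $p_1 := p + D\phi(z_0)$, and form $\phi_j(z) := \phi(z) + \lambda_j v^{\lambda_j}(z/\lambda_j, \omega; p_1) + \overline{H}(p_1)$ using the subsequence from Remark \ref{convsubsequence}. The same computation as in Proposition \ref{metricproblemhomogthm} shows that $\phi_j$ is a strict viscosity subsolution (with right-hand side $\mu - \theta/2$) of the $\lambda_j$-rescaled equation on $B(z_0, r)$ for $j$ large and $r$ small, and the nonlocal comparison Lemma \ref{comparisonlemma} applied against the supersolution $V^{\lambda_j}$ yields $\inf_{B(z_0,r)}[V^{\lambda_j} - \phi_j] \geq \inf_{E}[V^{\lambda_j} - \phi_j]$, where $E$ is the $\bar{r}$-neighborhood of $\partial B(z_0,r)$. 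Passing $j \to \infty$ contradicts the strict local minimum of $V - \phi$ at $z_0$, so $V$ is a viscosity supersolution of $\overline{H}(p + DV) = \mu$ on all of $\mathbb{R}^n$.

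Since $V$ is Lipschitz, Rademacher's theorem provides a point $z_0$ of differentiability, and the supersolution property gives $\mu \leq \overline{H}(p + DV(z_0))$; the assumption that $p$ maximizes $\overline{H}$ then forces $\overline{H}(p + DV(z_0)) \leq \overline{H}(p)$, whence $\mu \leq \overline{H}(p)$. Taking the supremum over admissible $\mu$ yields $\mu^* \leq \overline{H}(p)$, completing the proof. The set $\Omega_5$ is taken to be the intersection of $\Omega_4$ with the full-probability set from Remark \ref{convsubsequence} on which the uniform convergence of $\lambda_j v^{\lambda_j}(\cdot/\lambda_j, \omega; p_1)$ holds for every rational $p_1$, extended to arbitrary $p_1$ via Lemma \ref{plipschitzlemma}. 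I expect the main technical hurdle to be the supersolution-side perturbed test function computation for the nonlocal operator, and in particular securing the boundary-layer estimate on the $\bar{r}$-thickening of $\partial B(z_0,r)$ that Lemma \ref{comparisonlemma} requires; this should follow from the strict local minimum of $V - \phi$ combined with the locally uniform convergence $V^{\lambda_j} \to V$ on a slightly enlarged compact set.
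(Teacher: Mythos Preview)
Your argument is correct, but it takes a genuinely different route from the paper for the upper bound $\mu^* \leq \overline{H}(p)$. The paper does not rescale the given Lipschitz witness $v$ directly. Instead, it uses $v$ only as an upper barrier in a Perron construction: it builds a metric-type function $s_\mu(z,z_1,\omega) := \inf_{h\in\mathcal S(\omega)}[h(z)-h(z_1)]$ over a class $\mathcal S(\omega)$ of Lipschitz supersolutions dominating a cone barrier on $\overline{B(0,1)}$, checks that $s_\mu$ is jointly stationary and superadditive, and then reruns the subadditive-ergodic-theorem homogenization of Section~\ref{sec:metricproblem} to produce a limit $\overline s_\mu$ solving $\overline H(p+D\overline s_\mu)=\mu$ on $\mathbb R^n\setminus\{0\}$; the maximizer hypothesis then forces $\mu\le\overline H(p)$. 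Your approach replaces all of this machinery with a compactness step (Arzel\`a--Ascoli on the equi-Lipschitz family $V^\epsilon=\epsilon v(\cdot/\epsilon)$) followed by a single perturbed-test-function computation and Rademacher's theorem. This is more elementary and avoids both the Perron construction and a second invocation of the subadditive ergodic theorem; the paper's route, on the other hand, stays within the metric-problem framework already built in Section~\ref{sec:metricproblem} and does not need to pass through an auxiliary limit $V$.

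Two small points of care in your write-up: first, the Arzel\`a--Ascoli subsequence should be extracted \emph{from} the fixed subsequence $\lambda_j$ of Remark~\ref{convsubsequence} (or vice versa by diagonalization), so that the two limits $V^{\lambda_j}\to V$ and $\lambda_j v^{\lambda_j}(\cdot/\lambda_j;p_1)\to -\overline H(p_1)$ hold along the same index; second, in the rescaled problem the kernel $J_{\lambda_j}$ has support of size $\lambda_j\bar r$, so the exterior set $E$ in Lemma~\ref{comparisonlemma} is the $\lambda_j\bar r$-thickening of $\partial B(z_0,r)$ rather than the $\bar r$-thickening, which only strengthens the boundary-layer estimate you need.
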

\begin{proof}
Let $\tilde{H}(\omega)$ be defined to be the right hand side of (\ref{lemma1equation}). The stationarity of the coefficients of \eqref{Hcharacterization}, the assumption that $v$ is Lipschitz continuous, and the ergodicity assumption imply that $\tilde{H}(\omega) = \tilde{H}$ on $\Omega_5\subset \Omega_4$, where $\mathbb{P}[\Omega_5] = 1$. In addition, the existence of $w_\nu$ for any $\nu > 0$ implies that $\tilde{H} \geq \overline{H}(p) - \nu$ for $\nu$ arbitrarily small. This means that $\tilde{H} \geq \overline{H}(p)$. It remains to show the reverse inequality, and this follows in a similar fashion as in \cite{ben}. Select $\mu < \tilde{H}$. Then there exists $v \in \mathrm{Lip}(\mathbb{R}^n)$ satisfying (\ref{Hcharacterization}). Define $\tilde{v}(z) = v(z) - v(0)$ and $\tilde{u}(z) := -a|z|-C.$ Then upon taking $a$ and $C$ sufficiently large $\tilde{u}(z)$ is a subsolution of (\ref{Hcharacterization}) on $\mathbb{R}^n \backslash B(0, 1)$ satisfying $\tilde{u} \leq \tilde{v}$ on $\overline{B(0, 1)}$. Define
\begin{align*}
\mathcal{S}(\omega) &:= \{h \in \mathrm{Lip}(\mathbb{R}^n): h \geq \tilde{u} \text{ on } B(0, 1), h \text{ satisfies } (\ref{Hcharacterization})\}, \\
 s_\mu(z, z_1, \omega) &:= \inf_{h \in \mathcal{S}(\omega)} [h(z) - h(z_1)].
\end{align*}
Because $\tilde{v} \in \mathcal{S}(\omega)$, we can apply Perron's method, and we see that $s_\mu$ is a solution of
\begin{equation}
\left\{
\begin{array}{ll}
\displaystyle \bar{J}-\int J(y) \exp(-y \cdot p + s_\mu(z-y, \omega) - s_\mu(z, \omega))dy - c(z, \omega) = \mu & \text{ on } \mathbb{R}^n \backslash \overline{B(0, 1)}, \\
s_\mu = \tilde{u} & \text{ on } \overline{B(0, 1)}, \end{array}
\right.
\end{equation}
satisfying $\tilde{u} \leq s_\mu \leq \tilde{v}$ on $\mathbb{R}^n \backslash B(0, 1)$. Note that similar to the situation in Proposition \ref{metricproblemwellposed}, $s_\mu$ is piecewise Lipschitz continuous. The function $s_\mu$ is superadditive by construction, and we can check that it is jointly stationary using the fact that a function $w(z) \in \mathcal{S}(\tau_{z_1} \omega)$ if and only if $w(y+z) \in \mathcal{S}(\omega)$, because the coefficients of \eqref{Hcharacterization} are stationary.
%Therefore, for each $z, z_1, z_2 \in \mathbb{R}^n$,
%\begin{align*}
%s_\mu(z + z_2, z_1 + z_2; \omega) &= \inf_{h \in \mathcal{S}(\omega)} [h(z + z_2) - h(z_1 + z_2)] \\
%&= \inf_{h \in \mathcal{S}(\tau_{z_2} \omega)} [h(z) - h(z_1)] = s_\mu(z, z_1, \tau_{z_1} \omega).
%\end{align*}
Now if we define $s^\epsilon_\mu(z) := \epsilon s_\mu\left(\frac{z}{\epsilon}\right)$, then $s^\epsilon_\mu$ satisfies the rescaled system
$$
\left\{
\begin{array}{ll}
\displaystyle \bar{J}-\int \epsilon^{n} J_\epsilon(y) \exp\left(-y \cdot p + \frac{s^\epsilon_\mu(z - y, \omega) - s^\epsilon_\mu(z, \omega)}{\epsilon}\right)dy - c\left(\frac{z}{\epsilon}, \omega\right) = \mu & \text{ on } \mathbb{R}^n \backslash \overline{B(0, \epsilon)}, \\
s^\epsilon_\mu = \tilde{u}^\epsilon & \text{ on } \overline{B(0, \epsilon)}, \end{array}
\right.
$$
with $\tilde{u}^\epsilon(z) = \epsilon \tilde{u}(\frac{z}{\epsilon})$. We remark that the condition $\mu < \overline{H}(p)$ was only used in Proposition \ref{metricproblemhomogthm} in order to ensure the existence of the $m^\epsilon_\mu$, and we have the existence of $s^\epsilon_\mu$ \emph{a priori} in this case. (\ref{metricproblemoscbound}) holds for $s^\epsilon_\mu$ by the same argument as in Proposition \ref{metricproblemoscprop}, and we have established superadditivity and joint stationarity for $s^\epsilon_\mu$ above. Therefore, we can conclude that using the subadditive ergodic theorem as in Proposition \ref{metricproblemhomogthm} that $s^\epsilon_\mu(z) \rightarrow \overline{s}_\mu(z)$ a.s. in $\omega$, where $\overline{s}_\mu$ satisfies the equation $\overline{H}(p+ D\overline{s}) = \mu \text{ on } \mathbb{R}^n \backslash \{0\}.$ This means that $\overline{H}(p) \geq \mu$, and since $\mu$ was an arbitrary constant satisfying $\mu < \tilde{H}$, this implies $\overline{H}(p) \geq \tilde{H}$.
\end{proof}
We will now prove \eqref{upgradedconvergence}.
\begin{theorem} \label{asconvergence}
There exists a set $\Omega_6 \subset \Omega_5$ of full probability such that for every $R > 0$, $p \in \mathbb{R}^n$, \eqref{upgradedconvergence} holds.
\end{theorem}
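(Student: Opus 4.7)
The plan is to establish almost sure pointwise convergence $\lim_{\lambda \to 0} \lambda v^\lambda(0, \omega; p) = -\overline{H}(p)$ via a lower-barrier argument built from the metric function, and then to upgrade to \eqref{upgradedconvergence} via oscillation bounds and stationarity. From Proposition \ref{firstconvergenceprop} we already have $\limsup_{\lambda \to 0}\lambda v^\lambda(0, \omega; p) = -\overline{H}(p)$ almost surely, and Remark \ref{liminfremark} produces the deterministic $\hat{H}(p) := -\liminf_{\lambda \to 0}\lambda v^\lambda(0, \omega; p) \geq \overline{H}(p)$; so the remaining task is to prove the reverse inequality $\hat{H}(p) \leq \overline{H}(p)$.

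Fix $\mu < \overline{H}(p)$ and $\omega$ in the full-probability set $\Omega_4$ from Proposition \ref{metricproblemhomogthm}, on which $m_\mu(\cdot, 0, \omega; p)$ solves \eqref{metricequation} on $\mathbb{R}^n \setminus D_1$ (with $D_1 = \overline{B(0,1)}$) and $M^* := \sup_{\mathbb{R}^n} m_\mu(\cdot, 0, \omega) < \infty$ by Lemma \ref{metricfunctionlemma}. Setting
\[
\check{w}^\lambda(z) := m_\mu(z, 0, \omega; p) - \mu/\lambda - M^*,
\]
constant shifts cancel inside the nonlocal exponential, so substitution into \eqref{approxcellproblemomega} combined with the metric equation yields, on $\mathbb{R}^n \setminus D_1$,
\[
\lambda \check{w}^\lambda + \bar{J} - \int J(y) e^{-y \cdot p} e^{\check{w}^\lambda(z-y) - \check{w}^\lambda(z)}\, dy - c(z,\omega) = \lambda\bigl(m_\mu(z, 0, \omega) - M^*\bigr) \leq 0,
\]
so $\check{w}^\lambda$ is a subsolution of the approximate cell problem on $\mathbb{R}^n \setminus D_1$.

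I would then apply a comparison principle for \eqref{approxcellproblemomega} analogous to Lemma \ref{comparisonlemma}, which demands only an interior boundary inequality $\check{w}^\lambda \leq v^\lambda$ on $\mathrm{int}(D_1)$ together with appropriate behavior at infinity. The infinity condition is automatic: $\check{w}^\lambda \to -\infty$ linearly by Lemma \ref{metricfunctionlemma} while $v^\lambda \geq -\|c\|_\infty/\lambda$. On $\mathrm{int}(D_1)$ one has $m_\mu = w_\nu - w_\nu(0)$, and the inequality is arranged by blending $\check{w}^\lambda$ with a shifted translate of $w_\nu$, in the spirit of the $\hat{w}^\lambda$ construction used for \eqref{limsup}. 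This yields $\check{w}^\lambda \leq v^\lambda$ on $\mathbb{R}^n$; evaluating at $z = 0$ (where $m_\mu(0, 0, \omega) = 0$) produces $\lambda v^\lambda(0, \omega) \geq -\mu - \lambda M^*$, and sending $\lambda \to 0$ followed by $\mu \nearrow \overline{H}(p)$ delivers $\hat{H}(p) \leq \overline{H}(p)$. The uniform statement \eqref{upgradedconvergence} on $B_{R/\lambda}$ then follows by a Vitali cover of $B_R$ by balls $B(z_i, \gamma)$, the oscillation bound $\osc_{B(z_i/\lambda, \gamma/\lambda)} \lambda v^\lambda \leq C\gamma$ from Theorem \ref{finalosctheorem}, and the stationarity of $v^\lambda$ together with the deterministic nature of $\overline{H}(p)$, in the spirit of the closing display in the proof of Proposition \ref{firstconvergenceprop}.

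The principal obstacle is the boundary step: along any ``bad'' subsequence on which $-\lambda v^\lambda(0, \omega)$ would exceed $\mu$, the naive barrier $\check{w}^\lambda$ fails to lie below $v^\lambda$ on $D_1$ precisely where we need it, and the jump discontinuity of $m_\mu$ at $\partial D_1$ complicates matters further. Overcoming this requires a careful modification of $\check{w}^\lambda$ near $D_1$ that preserves the subsolution property outside $D_1$ while forcing $\check{w}^\lambda \leq v^\lambda$ on $\mathrm{int}(D_1)$, most naturally by combining $m_\mu$ outside with a translate of the approximate supercorrector $w_\nu$ inside. A secondary subtlety is that extending pointwise almost sure convergence at $0$ to uniform convergence on the growing ball $B_{R/\lambda}$ must be handled carefully, since the covering points $z_i/\lambda$ drift through $\Omega$ under stationarity as $\lambda \to 0$.
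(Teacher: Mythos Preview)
Your barrier argument has a genuine and unfixable gap precisely at the boundary step you yourself flag. On $\mathrm{int}(D_1)$ you need $\check{w}^\lambda \leq v^\lambda$, i.e.\ $w_\nu(z) - w_\nu(0) - \mu/\lambda - M^* \leq v^\lambda(z)$. Along the bad subsequence where $\lambda v^\lambda(0,\omega) \to -\hat H(p)$ with $\hat H(p) > \mu$, one has $v^\lambda(0) \approx -\hat H(p)/\lambda < -\mu/\lambda$ for small $\lambda$, so the inequality fails at the origin by a margin of order $(\hat H(p)-\mu)/\lambda$. The proposed remedy of ``blending with a shifted translate of $w_\nu$'' cannot work: $w_\nu$ is a \emph{super}corrector satisfying \eqref{supercorrector}, and convex combinations with a supersolution do not produce subsolutions. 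The paper stresses that in the stationary ergodic setting no approximate subcorrector satisfying \eqref{subcorrector} is available, so there is no object to blend with that would lower $\check{w}^\lambda$ on $D_1$ while preserving the subsolution property on $\mathbb{R}^n\setminus D_1$. In short, the conclusion you want at $z=0$ is the very inequality you need on $D_1$ to apply comparison; the argument is circular.

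The paper's route is quite different and sidesteps this trap. It splits into two cases. When $\overline H(p) = \max \overline H$, it invokes the variational characterization of Lemma~\ref{lemma2}. When $\overline H(p) < \max \overline H$ (say the max is at $0$), it uses the metric problem at $p=0$ with level $\mu = \overline H(p)$, picks via Lemma~\ref{randomlemma} a direction $z_0$ where $\overline m_\mu(z_0;0) = z_0\cdot p$, and builds the perturbed test function $\varphi_j(z) = (z+z_0)\cdot p + \lambda_j v^{\lambda_j}(z/\lambda_j,\omega;p) + \eta|z|^2$. The comparison is then between $\varphi_j$ (a strict supersolution) and $m^{\lambda_j}_\mu(\cdot,-z_0,\omega;0)$ on a fixed small ball $B(0,r)$, and the homogenization of the metric problem (Proposition~\ref{metricproblemhomogthm}) forces a contradiction. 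The key point is that the comparison is run on a \emph{bounded} domain with boundary data on an annulus, not on the exterior of $D_1$ with data on $D_1$, so the circularity never arises.

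Your upgrade from pointwise convergence at $0$ to uniform convergence on $B_{R/\lambda}$ is also incomplete. The Vitali-plus-oscillation argument you cite from the end of Proposition~\ref{firstconvergenceprop} is carried out there \emph{in expectation}, where stationarity makes $\mathbb E[\lambda v^\lambda(z_i/\lambda,\cdot)] = \mathbb E[\lambda v^\lambda(0,\cdot)]$ trivially. Almost surely, the points $\tau_{z_i/\lambda}\omega$ drift through $\Omega$ as $\lambda\to 0$, and pointwise a.s.\ convergence at the origin says nothing about them. The paper's Step~1 handles this with Egorov's theorem (uniform convergence on a set $E_\gamma$ of probability $\geq 1-\gamma$) together with the ergodic theorem (a.s., most $y\in B_{R/\lambda}$ satisfy $\tau_y\omega\in E_\gamma$), so that every $z\in B_{R/\lambda}$ is within $C\gamma^{1/2n}R/\lambda$ of such a good point.
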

\begin{proof}
Lemma \ref{plipschitzlemma} implies that we can reduce to the case of a fixed $p \in \mathbb{R}^n$; $\Omega_6$ is then obtained by intersecting the full probability subsets $\Omega_p$ created for rational $p$. We follow the methods of \cite{takispaper} and \cite{takispaper2} and separate the proof into parts. The first part shows that it suffices to consider the convergence of $\lambda v^\lambda(0, \omega)$. The next part considers this convergence, and we show that $\limsup_{\lambda \rightarrow 0} \lambda v^\lambda(0, \omega) = \liminf_{\lambda \rightarrow 0} \lambda v^\lambda(0, \omega)$ by considering two cases, the case where $\overline{H}(p) = \max{\overline{H}}$ and the case where $\overline{H}(p) < \max{\overline{H}}$.

\emph{Step 1: Reducing the question to convergence at the origin.} Proposition \ref{firstconvergenceprop} and Remark \ref{liminfremark} imply that for $\omega \in \Omega_5$,
\begin{equation} \label{HbarHhat}
\limsup_{\lambda \rightarrow 0} \lambda v^\lambda(0, \omega) = -\overline{H}(p) \geq -\hat{H}(p):= \liminf_{\lambda \rightarrow 0} \lambda v^\lambda(0, \omega).
\end{equation}
In this part we claim that for each $R > 0$,
\begin{equation} \label{step1}
\limsup_{\lambda \rightarrow 0}[\sup_{B_{\frac{R}{\lambda}}}\lambda v^\lambda(\cdot, \omega)] = -\overline{H}(p) \text{ and } \liminf_{\lambda \rightarrow 0} [\inf_{B_{\frac{R}{\lambda}}}\lambda v^\lambda(\cdot, \omega)] = -\hat{H}(p) \text{ a.s. in } \omega.
\end{equation}
We will prove the first identity of \eqref{step1}, the other one following in a similar fashion, and both proceeding in a similar fashion as the proof of Proposition 7.1 of \cite{takispaper}. By (\ref{HbarHhat}) and Egorov's theorem, for every $\gamma > 0$, there exists $\bar{\lambda}(\gamma) > 0$ and a set $E_\gamma$ such that $\mathbb{P}[E_\gamma] \geq 1-\gamma$ and for every $0 < \lambda < \bar{\lambda}(\gamma)$, $\sup_{\omega \in E_\gamma} \lambda v^\lambda(0, \omega) + \overline{H} \leq \gamma.$
The Ergodic Theorem gives us for each $\gamma > 0$ a full probability set $F_\gamma$ such that for every $\omega \in F_\gamma$,
$$
\lim_{R \rightarrow \infty} \dashint_{B_R} \mathbbm{1}_{E_\gamma}(\tau_y\omega) dy = \mathbb{P}[E_\gamma] \geq 1-\gamma.
$$
Now define $F_0:= \bigcap_{j = 1}^\infty F_{2^{-j}}.$ Then $\mathbb{P}(F_0) = 1$. Fix $\omega \in F_0$ and $R, \gamma > 0$, with $\gamma = 2^{-j}$ for some natural number $j$. We know that for $\lambda$ sufficiently small,
\begin{equation} \label{measureofset}
|\{y \in B_\frac{R}{\lambda}: \tau_y \omega \in E_\gamma\}| \geq (1-2\gamma) |B_{\frac{R}{\lambda}}|.
\end{equation}
Using (\ref{finaloscbound}), we have for $r \in (\gamma R, R)$ and $\lambda$ sufficiently small,
\begin{equation} \label{blah5}
\sup_{z \in B_R} \osc_{B(\frac{z}{\lambda}, \frac{r}{\lambda})} \lambda v^\lambda(\cdot, \omega) \leq Cr.
\end{equation}
Select any $z \in B_{\frac{R}{\lambda}}$. We claim that due to \eqref{measureofset} we find a point $z_1 \in B_{\frac{R}{\lambda}}$ with $|z - z_1| \leq C\lambda^{-1}\gamma^{\frac{1}{2n}} R \text{ and } \tau_{z_1} \omega \in E_\gamma.$ This is true because the ball $B(z,C\lambda^{-1}\gamma^{\frac{1}{2n}} R)$ is too large to lie in the complement of $\{z_1 \in \mathbb{R}^d: \tau_{z_1} \omega \in E_\gamma\}$. Now due to (\ref{blah5}) and the fact that $v^\lambda$ are stationary, we can deduce that for each $\lambda$ sufficiently small,
\begin{align*}
\lambda v^\lambda(z, \omega) + \overline{H} &\leq |\lambda v^\lambda(z, \omega) - \lambda v^\lambda(z_1, \omega)| + \lambda v^\lambda(z_1, \omega) + \overline{H} \\
&\leq C(C\gamma^{\frac{1}{2n}} R) + \lambda v^\lambda(0, \tau_y\omega) + \overline{H} \leq C\gamma^{\frac{1}{2n}} R + \gamma,
\end{align*}
%\begin{equation*}
%\lambda v^\lambda(z, \omega) + \overline{H} \leq C\gamma^{\frac{1}{2n}} R + \gamma
%\end{equation*}
and hence for each $\omega \in F_0$ and $R > 0$, $\limsup_{\lambda \rightarrow 0} \sup_{z \in B_{\frac{R}{\lambda}}}(\lambda v^\lambda(z, \omega) + \overline{H}) \leq 0,$ because $\gamma$ can be taken to be arbitrarily small. This implies (\ref{step1}) for $\omega \in F_0 \cap \Omega_5$.

\emph{Step 2: The convergence at the origin in the case $\overline{H}(p) = \max{\overline{H}}$.} Now that we have reduced to considering convergence of the origin, we will consider two cases. First we consider the case when $p$ is the vector in $\mathbb{R}^n$ where the maximum of $p \mapsto \overline{H}(p)$ is attained. We'd like to show that $\overline{H}(p) = \hat{H}(p)$. It suffices to show that $\hat{H}(p) \leq \tilde{H}$, where $\tilde{H}$ is the right hand side of (\ref{lemma1equation}) from Lemma \ref{lemma2}. If $\hat{H}(p) \leq \tilde{H}$, then our assumption and Lemma \ref{lemma2} show that $\overline{H}(p) = \max_{\mathbb{R}^n} \overline{H} = \tilde{H},$ so we can conclude from \eqref{HbarHhat} that $\overline{H}(p) = \hat{H}(p)$. To show that $\hat{H}(p) \leq \tilde{H}$, note that for $\omega \in \Omega_5$, \eqref{hhatfullprob} holds. Therefore, we can use the same methods as in Section \ref{sec:effhamiltonian}. Consider $w^\lambda(z, \omega) := v^\lambda(z, \omega) - v^\lambda(0, \omega),$ and as before, let $w^\lambda_\theta = w^\lambda \ast \rho_\theta$ be the mollification of $w^\lambda$. Then we can pass to the limit as $\lambda \rightarrow 0$ along a subsequence to find a Lipschitz continuous solution $w$ of
$$
\bar{J}-\int J(y) \exp(-y\cdot p + w(z-y, \omega) - w(z, \omega))dy - c(z, \omega) \geq \hat{H}(p) - \nu \text{ in } \mathbb{R}^n
$$
for any $\nu > 0$ arbitrarily small. Therefore, we have by definition of $\tilde{H}$ that $\hat{H}(p) \leq \tilde{H}$.

\emph{Step 3: The convergence at the origin in the case $\overline{H}(p) < \max{\overline{H}}$.}
Now we consider the case where $p$ is not a vector where the maximum of $\overline{H}$ is attained. Without loss of generality, we can assume that $\max_{q \in \mathbb{R}^n} \overline{H}(q) = \overline{H}(0) > \overline{H}(p).$
%To justify this simplification, note that by (\ref{hbarcoercive}) the maximum of $\overline{H}$ is attained at some $\hat{p}$, and then we can adapt our definition of $\varphi_j$ below by replacing $p$ with $p - \hat{p}$. Our objective is to show that $\overline{H}(p) \geq \hat{H}(p)$.
We argue by contradiction and suppose that $\xi := \hat{H}(p) - \overline{H}(p) > 0.$ Select a subsequence $\lambda_j$, possibly depending on $\omega$, such that $\lim_{j \rightarrow \infty} \lambda_j v^{\lambda_j}(0, \omega; p)= -\hat{H}(p)$ Define $\mu := \overline{H}(p)$. According to Lemma \ref{randomlemma}, we can pick $z_0 \neq 0$ such that $\overline{m}_\mu(z_0; 0) = z_0 \cdot p.$
%and the plane $z_0 \cdot p$ touches $\overline{m}_\mu$ from above at $z_0$.
Let $\eta$ be a small parameter to be selected below, and define $\varphi_j(z) := (z + z_0) \cdot p + \lambda_j v^{\lambda_j}\left(\frac{z}{\lambda_j}, \omega; p\right) + \eta |z|^2$. We can easily see that for $r, \eta$ sufficiently small and $j$ large enough, we have that $\varphi_j$ satisfies
$$
\bar{J}-\int \lambda_j^{n} J_{\lambda_j}(y) \exp(\lambda_j^{-1}(\varphi_j(z - y, \omega) - \varphi_j(z, \omega)))dy - c\left(\frac{z}{\lambda_j}\right) \geq \mu + \frac{\xi}{2} \text{ in } B(0, r).
$$
Now denote $m^\epsilon_\mu$ to be the solution of (\ref{scaledmetricproblem}) with $p = 0$, which exists because $\mu < \overline{H}(0)$ by assumption. Because $\varphi_j$ is a strict supersolution of (\ref{scaledmetricproblem}), our comparison result Lemma \ref{comparisonlemma} implies that
\begin{equation} \label{contradiction}
\min_{z \in B(0, r)} (\varphi_j(z) - m^{\lambda_j}_\mu(z, -z_0, \omega; 0)) = \min_{z \in E} (\varphi_j(z) - m^{\lambda_j}_\mu(z, -z_0, \omega; 0)).
\end{equation}
Here $E$ is the same as it was in Lemma \ref{comparisonlemma}. We will get a contradiction by taking $j \rightarrow \infty$ on the right hand side of \eqref{contradiction}. Group the terms as follows:
\begin{multline*}
\varphi_j(x) - m^{\lambda_j}_\mu(z, -z_0, \omega; 0) = \left(\lambda_j v^{\lambda_j}\left(\frac{z}{\lambda_j}, \omega; p\right) + \eta |z|^2\right) + (p \cdot (z + z_0) - \overline{m}_\mu(z+z_0; 0)) \\
+ (\overline{m}_\mu(z + z_0;0) - m^{\lambda_j}_\mu(z, -z_0, \omega; 0)).
\end{multline*}
The last term converges locally uniformly to 0 by the homogenization of the metric problem, the second term is nonnegative and vanishes at $z = 0$, and the first term satisfies, by (\ref{step1}) and the fact that $E$ is bounded,
$$
\lim_{j \rightarrow 0} \inf_{z \in E} \left(\lambda_j v^{\lambda_j}\left(\frac{z}{\lambda_j}, \omega; p\right) + \eta |z|^2\right) \geq -\hat{H}(p) + \eta (r+C)^2 > -\hat{H}(p) = \lim_{j \rightarrow \infty} (\lambda_j v^{\lambda_j}(0, \omega; p)).
$$
This means that \eqref{contradiction} is impossible for $j$ large enough. Therefore, $\overline{H}(p) = \hat{H}(p)$.
\end{proof}
\subsection{Homogenization Result, Proof of Theorem \ref{maintheorem}}
We will finally finish the proof of our main result, Theorem \ref{maintheorem}.
\begin{proof}[Proof of Theorem \ref{maintheorem}] \label{mainhomogtheorem} \let\qed\relax
To prove our main theorem, the primary remaining step is the following theorem demonstrating the homogenization of \eqref{phiequation}.
\begin{theorem}
For $\omega \in \Omega_6$, $\phi^\epsilon$ converges locally uniformly to $\phi$ on $\mathbb{R}^n \times (0, \infty)$.
\end{theorem}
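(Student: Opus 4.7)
The plan is to apply the perturbed test function method in the viscosity framework, using the almost sure convergence \eqref{upgradedconvergence} as the homogenization input. Fix $\omega \in \Omega_6$ and define the upper and lower half-relaxed limits
\[
\phi^*(x,t) := \limsup_{\substack{\epsilon \to 0 \\ (y,s) \to (x,t)}} \phi^\epsilon(y,s), \qquad \phi_*(x,t) := \liminf_{\substack{\epsilon \to 0 \\ (y,s) \to (x,t)}} \phi^\epsilon(y,s).
\]
By comparison with the stationary sub/supersolutions $0$ and $1$ of \eqref{originalequation} one has $0 \leq u^\epsilon \leq 1$, hence $\phi^\epsilon \leq 0$ and $\phi^*, \phi_* \in [-\infty, 0]$. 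The plan is to show that $\phi^*$ is a viscosity subsolution and $\phi_*$ a viscosity supersolution of \eqref{effectiveequation}, to match the initial traces, and to invoke the comparison principle for this variational inequality (in the style of \cite{souganidisevans}) to conclude $\phi_* \geq \phi \geq \phi^*$. Since $\phi_* \leq \phi^*$ by construction, this forces $\phi^\epsilon \to \phi$ locally uniformly on $\mathbb{R}^n \times (0, \infty)$.

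The heart of the matter is the subsolution property at an interior point $(x_0, t_0)$ with $\phi^*(x_0, t_0) < 0$, where $\phi^* - \psi$ has a strict local maximum for a smooth test function $\psi$. Since $\phi^* \leq 0$ already takes care of the second argument of the $\max$ in \eqref{effectiveequation}, the task reduces to showing $\psi_t(x_0, t_0) + \overline{H}(p_0) \leq 0$, where $p_0 := D\psi(x_0, t_0)$. Pick $\lambda_\epsilon \to 0$ with $\lambda_\epsilon \leq R\epsilon / (|x_0|+1)$ so that Theorem \ref{asconvergence} yields $-\lambda_\epsilon v^{\lambda_\epsilon}(x/\epsilon, \omega; p_0) \to \overline{H}(p_0)$ uniformly for $x$ in a neighborhood of $x_0$, and define the perturbed test function
\[
\psi^\epsilon(x,t) := \psi(x,t) + \epsilon\, v^{\lambda_\epsilon}(x/\epsilon, \omega; p_0).
\]
Inserting $\psi^\epsilon$ into the operator in \eqref{phiequation}, using the expansion
\[
\epsilon^{-1}\bigl(\psi(x-\epsilon y,t) - \psi(x,t)\bigr) = -y\cdot D\psi(x,t) + O(\epsilon|y|^2), \quad |y|\leq \overline{r},
\]
and invoking the uniform bound on $\exp(v^{\lambda_\epsilon}(z-y) - v^{\lambda_\epsilon}(z))$ from Proposition \ref{psiboundprop} to justify dominated convergence of the nonlocal integral, the approximate cell problem \eqref{approxcellproblemomega} rewritten as $\bar{J} - \Phi^{\lambda_\epsilon}(z) = c(z,\omega) - \lambda_\epsilon v^{\lambda_\epsilon}(z,\omega; p_0)$ shows that the left-hand side of \eqref{phiequation} applied to $\psi^\epsilon$ equals
\[
\psi_t(x,t) - \lambda_\epsilon v^{\lambda_\epsilon}(x/\epsilon, \omega; p_0) + \Bigl(c(x/\epsilon, \omega) - \tfrac{f(x/\epsilon,\, u^\epsilon)}{u^\epsilon}\Bigr) + o_\epsilon(1).
\]
Because $\phi^*(x_0, t_0) < 0$, we have $u^\epsilon \to 0$ in a neighborhood of $(x_0, t_0)$, and by \eqref{kppcondition} the parenthesized difference vanishes. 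Evaluating at a sequence of maximum points $(x_\epsilon, t_\epsilon) \to (x_0, t_0)$ of $\phi^\epsilon - \psi^\epsilon$ and using the viscosity subsolution property of $\phi^\epsilon$ for \eqref{phiequation}, the residual is nonpositive, and passing to the limit gives $\psi_t(x_0, t_0) + \overline{H}(p_0) \leq 0$. The supersolution property for $\phi_*$ at a strict minimum on $\{\phi_* < 0\}$ is entirely symmetric; on $\{\phi_* = 0\}$ the second argument of the $\max$ saves the inequality automatically.

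The initial trace is identified by barrier arguments: one builds sub- and supersolutions of \eqref{phiequation} near $t = 0$ from $\epsilon \log u_0$ and (suitable translates of) the subcorrector $w_\nu$, to conclude $\phi^*(\cdot, 0^+) = \phi_*(\cdot, 0^+) = 0$ on $\mathrm{int}(G_0)$ and $-\infty$ on $\mathbb{R}^n \setminus \overline{G_0}$. Well-posedness of \eqref{effectiveequation} then yields $\phi^* = \phi = \phi_*$. Transferring through $u^\epsilon = \exp(\phi^\epsilon/\epsilon)$ produces $u^\epsilon \to 0$ locally uniformly on $\{\phi < 0\}$, while on $\mathrm{int}\{\phi = 0\}$ a standard KPP sub/supersolution argument (propagating a small stationary subsolution upward to the equilibrium $1$) yields $u^\epsilon \to 1$, proving Theorem \ref{maintheorem}. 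The main obstacle is the rigorous passage to the limit inside the nonlocal integral, where pointwise Taylor expansion in $y$ is unavailable; this is resolved by the compact support of $J$, the two-sided $\lambda$-uniform control on $\Phi^{\lambda}$ furnished by Proposition \ref{psiboundprop}, and the uniform convergence of $\lambda_\epsilon v^{\lambda_\epsilon}$ on $B(0, R/\lambda_\epsilon)$ from Theorem \ref{asconvergence}, which together make dominated convergence applicable and allow the formal computation of Section \ref{sec:maintheorem} to be rigorously closed.
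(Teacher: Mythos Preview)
Your approach is the same as the paper's at the structural level---half-relaxed limits, the perturbed test function $\psi + \epsilon v^{\lambda}(\cdot/\epsilon,\omega;p_0)$, and comparison for \eqref{effectiveequation}---but your implementation of the perturbed test function step differs from the paper's and has a gap. You evaluate the operator on $\psi^\epsilon$ directly and then invoke ``the viscosity subsolution property of $\phi^\epsilon$'' at a maximum of $\phi^\epsilon - \psi^\epsilon$. The difficulty is that $\psi^\epsilon$ is not a legitimate test function: $v^{\lambda_\epsilon}$ is only continuous, so $\psi^\epsilon$ is not smooth in $x$, and the viscosity definition does not give you an inequality at that maximum for free. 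The paper avoids this by the standard Evans-style doubling: it proves that the perturbed function $\varphi^\epsilon$ is itself a viscosity \emph{supersolution} of the strict inequality \eqref{periodichomogthing} by touching $\varphi^\epsilon$ from below with a smooth $\psi$ (which transfers to a smooth test function for $v^\epsilon$ in \eqref{approxcellproblemomega}), and then applies comparison between $\phi^\epsilon$ and $\varphi^\epsilon$ on the cylinder to reach the contradiction. This indirect route is what makes the argument rigorous in the viscosity framework and should replace your direct evaluation.

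Two smaller points. First, the paper simply takes $\lambda=\epsilon$; since Theorem \ref{asconvergence} gives uniform convergence on $B_{R/\epsilon}$ for any fixed $R>|x_0|$, your auxiliary sequence $\lambda_\epsilon$ is unnecessary. Second, for the subsolution side you do not need to argue that $u^\epsilon\to 0$ via $\phi^*<0$: \eqref{kppcondition} already gives $c(z)\geq f(z,u)/u$ for all $u>0$, so $\phi^\epsilon$ is a subsolution of the equation with $c$ in place of $f/u$ unconditionally, which is how the paper obtains \eqref{phiepeq}. The case split $\{\phi_*<0\}$ versus $\{\phi_*=0\}$ is needed only on the supersolution side, as you note.
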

Theorem \ref{mainhomogtheorem} in conjunction with \eqref{hopfcole} shows that $u^\epsilon \rightarrow 0$ on $\{\phi < 0\}$. The argument to show that $u^\epsilon \rightarrow 1$ on $\mathrm{int}\{\phi = 0\}$ is done in exactly the same manner as in Section 5 of \cite{almostperiodicpaper}. Therefore, to conclude the proof of Theorem \ref{maintheorem} it suffices to prove Theorem \ref{mainhomogtheorem}, and this follows via a perturbed test function method, similar to the proof of Theorem 4.1 of \cite{almostperiodicpaper}.
\end{proof}
\begin{proof}[Proof of Theorem \ref{mainhomogtheorem}]
For each $(x, t) \in \mathbb{R}^n \times (0, \infty)$, we define
\begin{equation} \label{phistar}
\phi^*(x, t) = \limsup_{\epsilon \rightarrow 0, (x', s) \rightarrow (x, t)} \phi^\epsilon(x', s), \phi_*(x, t) = \liminf_{\epsilon \rightarrow 0, (x', s) \rightarrow (x, t)} \phi^\epsilon(x', s)
\end{equation}
to be the half-relaxed upper and lower limits (see \cite{usersguide}); note that the local uniform bounds on $\phi^\epsilon$ from Lemma 4.5 of \cite{almostperiodicpaper} implies that $\phi^*(x, t), \phi_*(x, t) \in \mathbb{R}$ for all $(x, t) \in \mathbb{R}^n \times (0, \infty)$. We show that $\phi^*$ is a subsolution of
\begin{equation} \label{variationalinequality}
\max(\phi^*_t + \overline{H}(D\phi^*), \phi^*) \leq 0 \text{ in } \mathbb{R}^n \times (0, \infty);
\end{equation}
the proof that $\phi_*$ is a supersolution of \eqref{variationalinequality} follows similarly with some changes in dealing with the $(u^\epsilon)^{-1} f(u^\epsilon)$ term, as noted in \cite{almostperiodicpaper} and \cite{majda}. Take a smooth test function $\varphi$ and a point $(x_0, t_0)$ such that $(x, t) \mapsto \phi^*(x, t) - \varphi(x, t)$
has a strict global maximum at $(x_0, t_0)$ (with $t_0 > 0$). Because $\phi^\epsilon \leq 0$ by (\ref{hopfcole}) and comparison for \eqref{phiequation}, showing (\ref{variationalinequality}) reduces to showing that $\phi^*_t(x_0, t_0) + \overline{H}(D\phi^*(x_0, t_0)) \leq 0$.

Assume for a contradiction that $\varphi_t(x_0, t_0) + \overline{H}(D\varphi(x_0, t_0)) = \theta > 0.$ Set $p_0 := D\varphi(x_0, t_0)$ and for $\epsilon > 0$ define the perturbed test function $\varphi^\epsilon(x, t) := \varphi(x, t) + \epsilon v^\epsilon\left(\frac{x}{\epsilon}, \omega; p_0\right).$ $v^\epsilon$ is the solution to the approximated cell problem (\ref{approxcellproblemomega}) with $\lambda = \epsilon$ and $p = p_0$. We claim that for $r, \epsilon$ sufficiently small,
\begin{equation} \label{periodichomogthing}
\varphi^\epsilon_t + \bar{J} - \int J(y) \exp\left(\frac{\varphi^\epsilon(x - \epsilon y) - \varphi^\epsilon(x)}{\epsilon}\right) dy - c\left(\frac{x}{\epsilon}\right) \geq \frac{\theta}{2} \text{ in } B(x_0, r) \times (t_0 - r, t_0 + r)
\end{equation}
holds in the viscosity sense. To show (\ref{periodichomogthing}), select another smooth test function $\psi$ and a point $(x_1, t_1) \in B(x_0, r) \times (t_0 - r, t_0 + r)$ such that $(x, t) \mapsto (\varphi^\epsilon - \psi)(x, t)$ has a global minimum at $(x_1, t_1)$. This means that $(z, t) \mapsto v^\epsilon(z, \omega; p_0) - \frac{1}{\epsilon}(\psi(\epsilon z, t) - \varphi(\epsilon z, t))$ has a global minimum at $(\frac{x_1}{\epsilon}, t_1).$
In particular this implies that $\varphi_t(x_1, t_1) = \psi_t(x_1, t_1)$, because $v^\epsilon$ doesn't depend on $t$. We know that $v^\epsilon$ solves (\ref{approxcellproblemomega}) with $p_0$, so we have that
\begin{multline*}
\epsilon v^{\epsilon}\left(\frac{x_1}{\epsilon}, \omega; p_0\right) + \bar{J} - \int J(y) \exp(-y\cdot p_0) \\ \exp\left(\frac{\psi(x_1-\epsilon y) - \psi(x_1)}{\epsilon} + \frac{\phi(x_1-\epsilon y) - \phi(x_1)}{\epsilon}\right) dy - c\left(\frac{x_1}{\epsilon}\right) \geq 0.
\end{multline*}
We know by Theorem \ref{asconvergence} that for $\epsilon$ sufficiently small, $-\epsilon v^\epsilon\left(\frac{x_1}{\epsilon}\right) \geq \overline{H}(p_0) - \frac{\theta}{8} = -\varphi_t\left(\frac{x_1}{\epsilon}, t_1\right) + \frac{7\theta}{8},$ and because $\phi$ is smooth, as $\epsilon \rightarrow 0$ we have $\epsilon^{-1} (\phi(x_1-\epsilon y) - \phi(x_1)) \rightarrow -D\phi(x_1).$ Because $(x_1, t_1)$ is close to $(x_0, t_0)$ when $r$ is small, taking $\epsilon$ and $r$ sufficiently small yields that (\ref{periodichomogthing}) holds in the viscosity sense.
%We recall that $\hat{w}^\epsilon := \delta v^\epsilon$ is a supersolution of (\ref{lemma46}). Now if we take $q = D\varphi(x_1, t_1)$, $p = D\varphi(x_0, t_0)$ and recall that $\varphi$ is smooth, we get that for any $\nu > 0$ there is a $j$ sufficiently large such that
%\begin{multline}
%\delta \epsilon v^\epsilon(\frac{x_1}{\epsilon}, \omega; p) + 1 - \int \epsilon^{-n} J_\epsilon(y) \exp(\epsilon^{-1}( \psi(x_1 - y) - \psi(x_1))) dy - c(\frac{x_1}{\epsilon}) \geq \\
%(1-\delta)\left(1-\int J(y) \exp(-y \cdot \frac{D\varphi(x_1)-\delta D\varphi(x_0)}{1-\delta}) dy - c(\frac{x_1}{\epsilon})\right) - \nu,
%\end{multline}
%By Theorem \ref{asconvergence} we can take $\delta$ sufficiently close to 1 and take $\epsilon$ sufficiently small so that
%$$
%\delta \epsilon v^\epsilon(\frac{x_1}{\epsilon}, \omega; p) \leq -\overline{H}(p) + \frac{\theta}{8},
%$$
%and then we take $r$ sufficiently small so that
%$$
%(1-\delta)\left(1-\int J(y) \exp(-y \cdot \frac{D\varphi(x_1)-\delta D\varphi(x_0)}{1-\delta}) dy - c(\frac{x_1}{\epsilon})\right) \geq -\frac{\theta}{4},
%$$
%$$
%|\varphi_t(x_1, t_1) - \varphi_t(x_0, t_0)| \leq \frac{\theta}{8}.
%$$
%Putting these inequalities together, we get
%$$
%\psi_t(x_1, t_1) + 1 - \int \epsilon^{-n} J_\epsilon(y) \exp(\epsilon^{-1}( \psi(x_1 - y) - \psi(x_1))) dy - c(\frac{x_1}{\epsilon}) \geq\frac{\theta}{2},
%$$
%and making the change of variable $y \mapsto \epsilon y$ in the integral, and noting that $c(z)$ was defined as $\sup_{u > 0} u^{-1} f(z, u)$, we get (\ref{whatwewant2}).
In addition, we can use (\ref{kppcondition}) as before to see that $\phi^\epsilon$ satisfies
\begin{equation} \label{phiepeq}
\phi^\epsilon_t + \bar{J} - \int J(y) \exp\left(\frac{\phi^\epsilon(x_1 - \epsilon y) - \phi^\epsilon(x_1)}{\epsilon}\right)dy - c\left(\frac{x_1}{\epsilon}\right) \leq 0.
\end{equation}
Then given (\ref{periodichomogthing}) and (\ref{phiepeq}), we can use comparison to conclude that
$$
\max_{\overline{B(x_0, r) \times (t_0 - r, t_0 + r)}} \phi^\epsilon - \varphi^\epsilon = \max_{D} \phi^\epsilon - \varphi^\epsilon,
$$
where $D = \{B(x_0, r)^c \times [t_0 - r, t_0+r]\} \cup \{B(x_0, r) \times \{t = t_0 - r\}\}$. Upon taking $\epsilon \rightarrow 0$ this contradicts our initial assumption of $\phi^* - \varphi$ having a strict global maximum at $(x_0, t_0)$. We can show that the initial condition holds using the same argument as in \cite{almostperiodicpaper} and \cite{majda}, so this means that $\phi^*$ is a subsolution of (\ref{effectiveequation}). By comparison for (\ref{effectiveequation}), which follows due to \cite{cls}, $\phi^\ast = \phi_\ast = \phi$, which means that $\phi^\epsilon$ converges locally uniformly to $\phi$.
\end{proof}
\section*{Acknowledgements}
I'd like to thank Panagiotis Souganidis for suggesting this problem to me and to thank Panagiotis Souganidis and Benjamin Fehrman for many useful conversations. In particular I'd like to thank Panagiotis Souganidis for mentioning to me the mollification approach used in the proof of Proposition \ref{firstconvergenceprop}. In addition I'd like to thank Panagiotis Souganidis and Luis Silvestre for their numerous suggestions and advice throughout the process of writing and editing this paper.

This material is based upon work supported by the National Science Foundation Graduate Research Fellowship under Grant No. DGE-1144082.
\bibliography{FullPaper}
\bibliographystyle{abbrv}
\end{document}